%------------------------------------
% filename: nexPGA.tex
% submitted to ???
%------------------------------------
\documentclass[11pt]{article}

\usepackage{geometry}
\usepackage{latexsym}
\usepackage{mathrsfs}
\usepackage{pifont}
\usepackage{makecell}
\usepackage{amsfonts}
\usepackage{amssymb}
\usepackage{microtype}
\usepackage{subfigure}    
\usepackage{graphicx}
\usepackage{epstopdf}
\usepackage{float}
\usepackage{multirow}
\usepackage{bm}
\usepackage{amsmath}
\usepackage{amsthm}
\usepackage{color}
\usepackage[subnum]{cases}
\usepackage{rotating}
\usepackage{enumitem}
\usepackage{accents}
\usepackage{algorithm}
\usepackage{algorithmic}
\usepackage[title]{appendix}
\usepackage{mathtools}
\usepackage{caption}
\usepackage{url}
\usepackage{booktabs}
\usepackage{tcolorbox}

\geometry{left=2.8cm,right=2.8cm,top=2.5cm,bottom=2.5cm}

\makeatletter
\@addtoreset{equation}{section}
\makeatother

\makeatletter
\def \wideubar{\underaccent{{\cc@style\underline{\mskip15mu}}}}
\def \widebar{\accentset{{\cc@style\underline{\mskip10mu}}}}
\makeatother

\newcommand{\email}[1]{\protect\href{mailto:#1}{#1}}

\definecolor{blue}{rgb}{0,0,0.9}
\definecolor{red}{rgb}{0.9,0,0}
\definecolor{green}{rgb}{0,0.9,0}
\definecolor{brown}{rgb}{0.6,0.1,0.1}
\definecolor{lightgreen}{rgb}{0.1,0.5,0.1}

%% Hyperref setup
\usepackage[colorlinks=true,
breaklinks=true,
bookmarks=true,
urlcolor=blue,
citecolor=lightgreen,
linkcolor=lightgreen,
bookmarksopen=false,
draft=false]{hyperref}

\graphicspath{{images/}}

%------------------------------------------------------------------------------- Begin Document
\begin{document}
\newtheorem{property}{Property}[section]
\newtheorem{proposition}{Proposition}[section]
\newtheorem{append}{Appendix}[section]
\newtheorem{definition}{Definition}[section]
\newtheorem{lemma}{Lemma}[section]
\newtheorem{corollary}{Corollary}[section]
\newtheorem{theorem}{Theorem}[section]
\newtheorem{remark}{Remark}[section]
\newtheorem{problem}{Problem}[section]
\newtheorem{example}{Example}[section]
\newtheorem{assumption}{Assumption}
\renewcommand*{\theassumption}{\Alph{assumption}}

\title{A Nonmonotone Extrapolated Proximal Gradient-subgradient Algorithm beyond Global Lipschitz Gradient Continuity}

%\author{Anonymous Author(s)}

\author{
Lei Yang\thanks{School of Computer Science and Engineering, and Guangdong Province Key Laboratory of Computational Science, Sun Yat-sen University, Guangzhou, China (\email{yanglei39@mail.sysu.edu.cn}). 
}
\and
Jingjing Hu\thanks{School of Computer Science and Engineering, Sun Yat-sen University, Guangzhou, China (\email{hujj53@mail2.sysu.edu.cn}).
}
\and
Tianxiang Liu\thanks{(Corresponding author) Institute of Systems and Information Engineering, University of Tsukuba, Tsukuba, Japan (\email{liutx@sk.tsukuba.ac.jp}). 
}
}

\maketitle

\begin{abstract}
With the advancement of modern applications, an increasing number of composite optimization problems arise whose smooth component does \emph{not} possess a globally Lipschitz continuous gradient. This setting prevents the direct use of the proximal gradient (PG) method and its variants, and has motivated a growing body of research on new PG-type methods and their convergence theory, in particular, global convergence analysis \textit{without} imposing any explicit or implicit boundedness assumptions on the iterates. Until recently, the first complete analysis of this kind has been established for the PG method and its specific nonmonotone variants, which has since stimulated further exploration along this research direction. In this paper, we consider a general composite optimization model beyond the global Lipschitz gradient continuity setting. We propose a novel problem-parameter-free algorithm that incorporates a carefully designed nonmonotone line search to handle the non-global Lipschitz gradient continuity, together with an extrapolation step to achieve potential acceleration. Despite the added technical challenges introduced by combining extrapolation with nonmonotone line search, we establish a refined convergence analysis for the proposed algorithm under the Kurdyka-{\L}ojasiewicz property, \textit{without} requiring any boundedness assumptions on the iterates. This work thus further advances the theoretical understanding of PG-type methods in the non-global Lipschitz gradient continuity setting. Finally, we conduct numerical experiments to illustrate the effectiveness of our algorithm and highlight the advantages of integrating extrapolation with a nonmonotone line search.

\vspace{2mm}
\noindent{\em Keywords:} 
local Lipschitz gradient continuity; global convergence; absence of boundedness-type assumptions; nonmonotone line search; extrapolation; %problem-parameter-free algorithm;
Kurdyka-{\L}ojasiewicz property.
\end{abstract}

% REQUIRED
%\begin{MSCcodes}
%68Q25, 68R10, 68U05
%\end{MSCcodes}

%%%%%%%%%%%%%%%%%%%%%%%%%%%%%%%%%%%%%%%%%%%
\section{Introduction}\label{sec-intro}

In this paper, we consider the following composite optimization problem:
\begin{equation}\label{ge-DC-problem}
\min\limits_{\bm{x}\in \mathbb{R}^n}\quad F(\bm{x}):=f(\bm{x})+P_1(\bm{x})-P_2(\bm{x}),
\end{equation}
where $f: \mathbb{R}^n \rightarrow \mathbb{R}$ is a continuously differentiable function, $P_1:\mathbb{R}^n\to(-\infty, \infty]$ is a proper closed function, and $P_2:\mathbb{R}^n\to\mathbb{R}$ is a convex function. More specific assumptions on model~\eqref{ge-DC-problem} are given in Assumption~\ref{assum-funs1}. 
This generalization of the assumptions emerges from modern application-driven problems, for example, see \cite{bbt2017descent,bstv2017first,lpt2017successive,wls2023linear}. In particular, we only require $\nabla f$ to have local Lipschitz continuity, which motivates the development of new algorithms suitable for this setting.

Most of the existing literature on first-order methods for solving problems with form \eqref{ge-DC-problem} relies on the assumption that $\nabla f$ is globally Lipschitz continuous  (also known as the $L$-smoothness of $f$). These methods are typically developed based on a majorization–minimization framework, whose fundamental iterative step is given by
\begin{equation}\label{PG-subpro}
\bm{x}^{k+1} \in \arg\min\limits_{\bm{x}\in{\mathbb{R}^n}}
\left\{\langle\nabla f(\bm{x}^k) - \bm{\xi}^k,\,\bm{x} - \bm{x}^k\rangle
+ \frac{\gamma}{2}\|\bm{x}-\bm{x}^k\|^2 + P_1(\bm{x})\right\},
\end{equation}
where $\bm{\xi}^k\in\partial P_2(\bm{x}^k)$ and $\gamma>0$ is a proximal parameter depending on the global Lipschitz constant of $\nabla f$. A well-known example is the classical proximal gradient (PG) method \cite{bt2009fast, cp2011proximal, lm1979splitting}. Accelerated variants of the PG method have also been extensively studied, including those incorporating extrapolation techniques such as the well-known fast iterative shrinkage-thresholding algorithm (FISTA) \cite{bt2009fast} and its nonconvex extensions \cite{ll2015accelerated,wcp2017linear,wcp2018proximal}, as well as those using nonmonotone line search strategies \cite{clp2016penalty,wnf2009sparse,yang2024proximal}.

The study of problems beyond global Lipschitz gradient continuity has started to attract increasing attention only recently. Related works in this direction mainly assume either the \textit{relative} $L$-smoothness of $f$ (e.g., \cite{bbt2017descent,bstv2017first,tft2022new,yht2025inexact,yt2026inexact}) or the \textit{local} Lipschitz continuity of $\nabla f$ (e.g., \cite{d2023proximal,jw2024advances,kl2025convergence,km2022convergence}). Although relative-$L$-smoothness-based approaches admit relatively mature theoretical guarantees, they typically require modifying the iterative step \eqref{PG-subpro} by replacing the simple quadratic proximal term with an appropriate Bregman proximal term, and also require prior knowledge of a global relative Lipschitz constant of $\nabla f$. In contrast, local-Lipschitz-gradient-based approaches aim to retain the standard iterative step \eqref{PG-subpro} and use a suitable line search mechanism to adapt to the local geometry of $\nabla f$.

However, the absence of global Lipschitz gradient continuity poses substantial challenges for the convergence analysis of the resulting algorithms, particularly when one aims to establish global sequential convergence and convergence rates.
To address this difficulty, most existing works further impose either a boundedness assumption on the iterates or a level-boundedness assumption on the objective function (which, in turn, ensures the boundedness of iterates). Under such boundedness-type assumptions, the local Lipschitz gradient continuity indeed amounts to a form of global Lipschitz gradient continuity, thereby simplifying the subsequent theoretical analysis. However, these boundedness-type assumptions can be restrictive and are often violated in practice, for example, in statistical regression problems with certain DC regularizers (e.g., SCAD \cite{fl2001variable}, MCP \cite{zhang2010nearly}) or in subproblems arising from the augmented Lagrangian method \cite{birgin2014practical}.

More recently, important progress has been made for the PG method and its nonmonotone variants under the local Lipschitz continuity of $\nabla f$, \textit{without} assuming the boundedness of iterates or the level-boundedness of the objective function; see, e.g., \cite{jkm2023convergence,jw2024advances,kl2025convergence}. These works have substantially deepened our understanding of PG-type methods beyond the standard $L$-smoothness setting. Nevertheless, they all focus on methods \textit{without} extrapolation. Since extrapolation is widely used in practice and often leads to superior numerical performance, it is therefore natural and important to ask whether one can incorporate an extrapolation step into such a framework while still establishing rigorous convergence guarantees under similarly weak assumptions. This question is also noted in the conclusions of \cite{jkm2023convergence}.

In this paper, we attempt to address the above question by proposing a \underline{n}onmonotone \underline{ex}trapolated \underline{p}roximal \underline{g}radient-subgradient \underline{a}lgorithm (nexPGA) for solving problem \eqref{ge-DC-problem}. Specifically, building on the basic iterative step \eqref{PG-subpro}, nexPGA employs a carefully designed Zhang--Hager (ZH)-type nonmonotone line search to accommodate both the extrapolation step and the non-$L$-smoothness setting. In contrast to existing ZH-type nonmonotone PG methods developed in \cite{d2023proximal,jw2024advances,kl2025convergence} beyond the standard $L$-smoothness setting but \textit{without} extrapolation, our line search is built on the following potential function:
\begin{equation}\label{definition-H-delta}
H_\delta(\bm{u}, \bm{v}, \gamma) := F(\bm{u}) + \frac{\delta \gamma}{8}\|\bm{u}-\bm{v}\|^2, \quad \forall\,\bm{u}, \,\bm{v} \in \mathbb{R}^n, ~\gamma>0,
\end{equation}
rather than on the objective function $F$ itself. This technique is essential for handling extrapolation, but it also introduces new analytical difficulties, since the convergence arguments developed for non-extrapolated methods can no longer be applied directly. These difficulties are further compounded by our goal of establishing the desired convergence properties \textit{without} assuming the global Lipschitz continuity of $\nabla f$ and \textit{without} imposing any explicit or implicit boundedness assumptions on the iterates. Consequently, a considerably more delicate and refined analysis is required. Another distinction from \cite{d2023proximal,jw2024advances,kl2025convergence} is that we additionally allow the presence of the convex term $P_2$ so that the proposed algorithmic framework can handle a broader class of nonsmooth structures. From an algorithmic perspective, this extension is meaningful because it allows more complicated nonsmooth terms to be handled through tractable subproblems involving only the proximal mapping of $P_1$; see, e.g., the subproblem arising in SDCAM \cite{lpt2017successive}.

The key contributions and findings of this paper are summarized as follows:
\begin{itemize}[leftmargin=0.7cm]
\item We develop nexPGA, a novel extrapolated algorithm with a ZH-type line search, for solving the general composite model \eqref{ge-DC-problem} beyond the standard $L$-smoothness setting; see Algorithm~\ref{algo-nexPGA}. Our nexPGA provides a unified and comprehensive \textit{problem-parameter-free} algorithmic framework that not only encompasses and complements numerous existing PG-type methods, but also introduces their potentially accelerated variants. 

\vspace{0.5mm}
\item We establish a complete and delicate global convergence analysis for nexPGA \textit{without} imposing any boundedness assumptions on the generated sequence. Specifically, we establish the global sequential convergence and local convergence rates of both the generated sequence and objective function values, see Theorems~\ref{nexPGA-theorem-wholesequence}, \ref{theorem-fun-rate} and \ref{theorem-seq-rate}. Our analysis and findings provide new insights into handling the extrapolation step under relaxed assumptions and a nonmonotone line search, and would contribute to the growing body of research on the ZH-type nonmonotone algorithms.

\vspace{0.5mm}
\item We conduct numerical experiments to evaluate the performance of nexPGA. Comparisons with several existing algorithms highlight the advantages of incorporating extrapolation and employing a ZH-type nonmonotone line search.
\end{itemize}

\vspace{1mm}
The rest of this paper is organized as follows. In Section \ref{sec-not-pre}, we present the notation and preliminaries used in this paper. We then describe nexPGA and establish the global subsequential convergence in Section \ref{sec-algo}, followed by a comprehensive study on global sequential convergence and convergence rates in Section \ref{sec-kl-analysis}. Some numerical results are presented in Section \ref{sec-num-exp}, with some concluding remarks given in Section \ref{sec-conclusion}.

%%%%%%%%%%%%%%%%%%%%%%%%%%%%%%%%%%%%%%%%%%%%%%%%%%%%%%%
\section{Notation and preliminaries}\label{sec-not-pre}

In this paper, we present scalars, vectors, and matrices in lowercase letters, bold lowercase letters, and uppercase letters, respectively. We use $\mathbb{N}$, $\mathbb{R}$, $\mathbb{R}^n$ ($\mathbb{R}^n_+$), and $\mathbb{R}^{m\times n}$ ($\mathbb{R}^{m\times n}_+$) to denote the sets of natural numbers, real numbers, $n$-dimensional real (nonnegative) vectors, and $m\times n$ real (nonnegative) matrices, respectively. For a vector $\bm{x}\in\mathbb{R}^n$, $x_i$ denotes its $i$-th entry, $\|\bm{x}\|$ denotes its Euclidean norm, and $\|\bm{x}\|_1:=\sum^n_{i=1}|x_i|$ denotes its $\ell_1$ norm. For a closed set $\mathcal{X}\subseteq\mathbb{R}^{n}$, we use $\mathrm{dist}(\bm{x}, \mathcal{X})$ to denote the distance from $\bm{x}$ to $\mathcal{X}$, i.e., $\mathrm{dist}(\bm{x}, \mathcal{X}) := \inf_{\bm{y}\in\mathcal{X}}\|\bm{x}-\bm{y}\|$.

For an extended-real-valued function $h: \mathbb{R}^n \rightarrow [-\infty,\infty]$, we say that it is \textit{proper} if $h(\bm{x}) > -\infty$ for all $\bm{x} \in \mathbb{R}^n$ and its domain ${\rm dom}\,h:=\{\bm{x} \in \mathbb{R}^n : h(\bm{x}) < \infty\}$ is nonempty. A proper function $h$ is said to be closed if it is lower semicontinuous. We use the notation $\bm{y} \xrightarrow{h} \bm{x}$ to denote $\bm{y} \rightarrow \bm{x}$ and $h(\bm{y}) \rightarrow h(\bm{x})$. The (limiting) subdifferential \cite[Definition~8.3]{rw1998variational} of $h$ at $\bm{x}\in \mathrm{dom}h$ is defined as
\begin{equation*}
\partial h(\bm{x}):=\left\{ \bm{d} \in \mathbb{R}^{n}: \exists\,\bm{x}^k \xrightarrow{h} \bm{x}~\mathrm{and}~\bm{d}^k \rightarrow \bm{d} ~\mathrm{with}~\bm{d}^k \in \widehat{\partial} h(\bm{x}^k) ~\mathrm{for~each}~k\right\},
\end{equation*}
where $\widehat{\partial} h(\widetilde{\bm{y}})$ denotes the Fr\'{e}chet subdifferential of $h$ at $\widetilde{\bm{y}}\in \mathrm{dom}h$, which is the set of all $\bm{d} \in \mathbb{R}^{n}$ satisfying $\liminf\limits_{\bm{y} \neq \widetilde{\bm{y}}, \,\bm{y} \rightarrow \widetilde{\bm{y}}} \frac{h(\bm{y})-h(\widetilde{\bm{y}})-\langle \bm{d},\,\bm{y}-\widetilde{\bm{y}}\rangle}{\|\bm{y}-\widetilde{\bm{y}}\|} \geq 0$. It can be observed from the above definition that
\begin{equation}\label{robust}
\left\{ \bm{d}\in\mathbb{R}^n: \exists \,\bm{x}^k \xrightarrow{h} \bm{x}, ~\bm{d}^k \rightarrow \bm{d} ~\mathrm{with}~\bm{d}^k \in \partial h(\bm{x}^k)~\text{for each } k \right\} \subseteq \partial h(\bm{x}).  
\end{equation}
When $h$ is continuously differentiable or convex, the above subdifferential coincides with the classical concept of gradient or convex subdifferential of $h$, respectively; see, e.g., \cite[Exercise~8.8]{rw1998variational} and \cite[Proposition~8.12]{rw1998variational}.

We next recall the Kurdyka-{\L}ojasiewicz (KL) property (see \cite{abs2013convergence,bdl2007the,bst2014proximal,lp2017calculus} for more details), which is now a standard technical condition for establishing the convergence of the whole sequence in the nonconvex setting. For simplicity, let $\Phi_{\nu}$ ($\nu>0$) denote a class of concave functions $\varphi:[0,\nu) \rightarrow \mathbb{R}_{+}$ satisfying: (i) $\varphi(0)=0$; (ii) $\varphi$ is continuously differentiable on $(0,\nu)$ and continuous at $0$; (iii) $\varphi'(t)>0$ for all $t\in(0,\nu)$. The KL property is described as follows.

\begin{definition}[\textbf{KL property and exponent}]\label{property-KL}
Let $h: \mathbb{R}^n \rightarrow \mathbb{R} \cup \{+\infty\}$ be a proper closed function. It is said to satisfy the \textbf{Kurdyka-{\L}ojasiewicz (KL)} property at $\tilde{\bm{x}}\in{\rm dom}\,\partial h$, if there exist a $\nu\in(0, +\infty]$, a neighborhood $\mathcal{V}$ of $\tilde{\bm{x}}$ and a function $\varphi \in \Phi_{\nu}$ such that for all $\bm{x} \in \mathcal{V} \cap \{\bm{x}\in \mathbb{R}^{n} : h(\tilde{\bm{x}})<h(\bm{x})<h(\tilde{\bm{x}})+\nu\}$, it holds that 
\begin{equation*}
\varphi'(h(\bm{x})-h(\tilde{\bm{x}}))\,\mathrm{dist}(\bm{0}, \,\partial h(\bm{x})) \geq 1.
\end{equation*}
The function $h$ is called a KL function, if it satisfies the KL property at each point of ${\rm dom}\,\partial h$. Furthermore, it is said to be a KL function with an exponent $\theta$ if $\varphi$ can be chosen as $\varphi(t)=\tilde{a}t^{1-\theta}$ for some $\tilde{a} > 0$ and $\theta\in[0, 1)$.
\end{definition}

We recall the uniformized KL property, which was established in \cite[Lemma 6]{bst2014proximal}.

\begin{proposition}[\textbf{Uniformized KL property}]\label{uniKL}
Suppose that $h: \mathbb{R}^n \rightarrow \mathbb{R} \cup \{+\infty\}$ is a proper closed function and $\Gamma$ is a compact set. If $h \equiv \zeta$ on $\Gamma$ for some constant $\zeta$ and satisfies the KL property at each point of $\Gamma$, then there exist $\varepsilon>0$, $\nu>0$ and $\varphi \in \Phi_{\nu}$ such that
\begin{equation*}
\varphi'(h(\bm{x}) - \zeta)\,\mathrm{dist}(\bm{0}, \,\partial h(\bm{x})) \geq 1
\end{equation*}
for all $\bm{x} \in \{\bm{x}\in\mathbb{R}^{n}: \mathrm{dist}(\bm{x},\,\Gamma)<\varepsilon\} \cap \{\bm{x}\in \mathbb{R}^{n} : \zeta < h(\bm{x}) < \zeta + \nu\}$.
\end{proposition}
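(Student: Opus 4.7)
The plan is to exploit the compactness of $\Gamma$ to reduce the pointwise KL property to a finite family of local KL inequalities, and then combine the associated desingularizing functions into a single uniform one by summation.

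First, I would invoke Definition~\ref{property-KL} at each $\bar{\bm{x}}\in\Gamma$ (noting $h(\bar{\bm{x}})=\zeta$) to obtain a neighborhood, which may be taken to be an open ball $B(\bar{\bm{x}},\varepsilon_{\bar{\bm{x}}})$, together with a level $\nu_{\bar{\bm{x}}}>0$ and a function $\varphi_{\bar{\bm{x}}}\in\Phi_{\nu_{\bar{\bm{x}}}}$ witnessing the KL inequality locally at $\bar{\bm{x}}$. The half-radius balls $B(\bar{\bm{x}},\varepsilon_{\bar{\bm{x}}}/2)$ form an open cover of $\Gamma$, so compactness yields a finite subcover indexed by points $\bar{\bm{x}}_1,\dots,\bar{\bm{x}}_m$.

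Next, I would define the global constants $\varepsilon:=\min_{1\le i\le m}\varepsilon_{\bar{\bm{x}}_i}/2$ and $\nu:=\min_{1\le i\le m}\nu_{\bar{\bm{x}}_i}$, and take
\[
\varphi(t):=\sum_{i=1}^{m}\varphi_{\bar{\bm{x}}_i}(t),\qquad t\in[0,\nu).
\]
All defining properties of $\Phi_\nu$, namely concavity, $\varphi(0)=0$, continuous differentiability on $(0,\nu)$, continuity at $0$, and strict positivity of $\varphi'$, are preserved under finite sums, so $\varphi\in\Phi_\nu$. For any $\bm{x}$ in the stated intersection, a short triangle-inequality step selects an index $i$ with $\bm{x}\in B(\bar{\bm{x}}_i,\varepsilon_{\bar{\bm{x}}_i})$, and since $\zeta<h(\bm{x})<\zeta+\nu\le\zeta+\nu_{\bar{\bm{x}}_i}$, the local KL inequality at $\bar{\bm{x}}_i$ gives $\varphi_{\bar{\bm{x}}_i}'(h(\bm{x})-\zeta)\,\mathrm{dist}(\bm 0,\partial h(\bm{x}))\ge 1$. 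The nonnegativity of the remaining summands $\varphi_{\bar{\bm{x}}_j}'$ yields $\varphi'(h(\bm{x})-\zeta)\ge\varphi_{\bar{\bm{x}}_i}'(h(\bm{x})-\zeta)$, which delivers the claimed uniform bound.

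The only subtle point, the one step I would actually need to write out carefully rather than sketch, is the Lebesgue-number-style verification that $\mathrm{dist}(\bm{x},\Gamma)<\varepsilon$ forces $\bm{x}$ into one of the finitely many balls $B(\bar{\bm{x}}_i,\varepsilon_{\bar{\bm{x}}_i})$; this follows by picking $\bar{\bm{y}}\in\Gamma$ with $\|\bm{x}-\bar{\bm{y}}\|<\varepsilon$, locating $\bar{\bm{y}}$ in some $B(\bar{\bm{x}}_i,\varepsilon_{\bar{\bm{x}}_i}/2)$, and applying the triangle inequality with $\varepsilon\le\varepsilon_{\bar{\bm{x}}_i}/2$. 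Everything else (closure of $\Phi_\nu$ under finite sums and the pointwise monotonicity of $\varphi'$) is entirely routine, so I do not anticipate any genuine obstacle beyond bookkeeping.
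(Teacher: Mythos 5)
The paper does not prove Proposition~\ref{uniKL}; it simply recalls it from \cite[Lemma 6]{bst2014proximal}. Your argument is essentially the standard proof from that reference: extract a finite subcover of $\Gamma$ from the local KL neighborhoods (the half-radius trick supplying the Lebesgue-number step), set $\nu$ and $\varepsilon$ to the minima, take $\varphi$ to be the sum of the finitely many local desingularizing functions, and use $\varphi' \ge \varphi_{\bar{\bm{x}}_i}'$ to transfer the local inequality to the uniform one; this is correct and complete.
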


%%%%%%%%%%%%%%%%%%%%%%%%%%%%%%%%%%%%%%%%%%%%%%%%%%%%%%%%%%%%%%%%%
\section{A nonmonotone extrapolated proximal gradient-subgradient algorithm}\label{sec-algo}

In this section, we develop a \underline{n}onmonotone \underline{ex}trapolated \underline{p}roximal \underline{g}radient-subgradient \underline{a}lgorithm (nexPGA) for solving problem \eqref{ge-DC-problem}, and study its preliminary convergence properties. The complete framework is outlined in Algorithm \ref{algo-nexPGA}, where the input parameters are chosen according to Assumption \ref{assum-para}.

\begin{assumption}\label{assum-para}
The input parameters satisfy $0<\gamma_{\min}\leq\gamma_{\max}<\infty$, $\beta_{\max}\geq0$, $0<p_{\min}<1$, $0\leq\delta<1$, $\tau>1$, and $0<\eta<\frac{1}{\sqrt{\tau}}$.
\end{assumption}

\begin{algorithm}[ht]
\caption{A nonmonotone extrapolated proximal gradient-subgradient algorithm (nexPGA) for solving problem \eqref{ge-DC-problem}}\label{algo-nexPGA}
\textbf{Input:} Follow Assumption \ref{assum-para} to choose $\gamma_{\min}$, $\gamma_{\max}$, $\beta_{\max}$, $p_{\min}$, $\delta$, $\tau$, $\eta$. Set $\bm{x}^{-1}=\bm{x}^0$, $\overline{\gamma}_{-1}=\gamma_{\min}$, $\mathcal{R}_0=F(\bm{x}^0)$, and $k=0$. \\[3pt]
\textbf{while} a termination criterion is not met, \textbf{do} \vspace{-1mm}
\begin{itemize}[leftmargin=1.8cm]
\item[\textbf{Step 1}.] Take any $\bm{\xi}^k\in \partial P_2(\bm{x}^k)$, and arbitrarily choose $\beta_{k,0}\in[0,\, \delta \beta_{\max}]$ and $\gamma_{k,0}\in[\gamma_{\min},\,\gamma_{\max}]$. Set $i=0$. 
    \begin{itemize}
    \item[\textbf{(1a)}] Compute
    \begin{equation}\label{extrapolation-yki}
    \bm{y}^{k,i}=\bm{x}^k+\beta_{k,i}(\bm{x}^k-\bm{x}^{k-1}).
    \end{equation}

    \item[\textbf{(1b)}] Solve the subproblem
    \begin{equation}\label{nexPGA-subpro}
    \hspace{-5mm}
    \bm{x}^{k,i} \in \arg\min\limits_{\bm{x}\in\mathbb{R}^n}\left\{\langle\nabla f(\bm{y}^{k,i})-\bm{\xi}^k, \,\bm{x}-\bm{y}^{k,i}\rangle + \frac{\gamma_{k,i}}{2}\|\bm{x}-\bm{y}^{k,i}\|^2+P_1(\bm{x})\right\}.
    \end{equation}

    \item[\textbf{(1c)}] If 
    \begin{equation}\label{nexPGA-lscond}
    H_\delta(\bm{x}^{k,i}, \bm{x}^k, \gamma_{k,i})-\mathcal{R}_k \leq-\frac{(1-\delta)\gamma_{k,i}}{8}\|\bm{x}^{k,i}-\bm{x}^k\|^2
    \end{equation}
    is satisfied, then go to \textbf{Step 2}.

    \vspace{1mm}
    \item[\textbf{(1d)}] Set $i\leftarrow i+1$, $\beta_{k,i}\leftarrow\eta\beta_{k,i-1}$, $\gamma_{k,i}\leftarrow\tau \gamma_{k,i-1}$, and go to \textbf{Step (1a)}.
    \end{itemize}

\item[\textbf{Step 2}.] Set $i_k\leftarrow i$, $\overline{\beta}_k\leftarrow\beta_{k,i}$, $\overline{\gamma}_k \leftarrow \gamma_{k,i}$, $\overline{\bm{y}}^k \leftarrow\bm{x}^k+\overline{\beta}_k(\bm{x}^k-\bm{x}^{k-1})$, and $\bm{x}^{k+1}\leftarrow\bm{x}^{k,i}$. Choose $p_{k+1}\in\left[p_{\min }, 1\right]$ to update  
    \begin{equation*}
    \mathcal{R}_{k+1}\leftarrow(1-p_{k+1}) \mathcal{R}_k + p_{k+1}H_\delta(\bm{x}^{k+1},\bm{x}^k,\overline{\gamma}_k).  
    \end{equation*}
    Then, set $k \leftarrow k+1$ and go to \textbf{Step 1}.
\end{itemize}
\textbf{end while}  \\
\textbf{Output}: $\bm{x}^k$ \vspace{0.5mm}
\end{algorithm}

The iterative framework of nexPGA is motivated by the proximal gradient method with extrapolation and line search (PGels) proposed by Yang \cite{yang2024proximal}, in which encouraging acceleration in practice was achieved. Nevertheless, the proposed nexPGA differs significantly from PGels. Specifically, nexPGA is \textit{problem-parameter-free} and does not require prior knowledge of a global Lipschitz constant of $\nabla f$, while PGels relies on this constant to guarantee the well-definedness of its nonmonotone line search. More importantly, nexPGA adopts an entirely different strategy for defining the reference value $\mathcal{R}_k$ used in the line search criterion. Indeed, PGels follows the spirit of the nonmonotone line search proposed by Grippo, Lampariello and Lucidi \cite{gll1986nonmonotone} by setting
\begin{equation*}
\mathcal{R}_k:=\max\left\{H_{\delta}(\bm{x}^{t}, \bm{x}^{t-1}, \overline{\gamma}_{t-1})\,:\, t=k,\,k-1,\cdots,[k-N]_{+}\right\}
\end{equation*}
for some fixed $N\in\mathbb{N}$. We refer to this as the GLL-type strategy, which often enables the use of larger $\beta_{k,i}$ and smaller $\gamma_{k,i}$, resulting in better practical performance.

In contrast, our nexPGA adopts a nonmonotone line search strategy inspired by Zhang and Hager \cite{zh2004nonmonotone}, where the reference value $\mathcal{R}_k$ is set as a convex combination of the previous reference value $\mathcal{R}_{k-1}$ and the latest potential function value $H_\delta(\bm{x}^{k}, \bm{x}^{k-1}, \overline{\gamma}_{k-1})$. We refer to this as the ZH-type strategy. Both GLL-type and ZH-type nonmonotone strategies have been widely adopted in the literature to enhance the numerical performance of proximal-gradient-type methods; see, e.g., \cite{clp2016penalty,gzlhy2013general,ll2015accelerated,tsp2018forward,wnf2009sparse, yang2024proximal}. Since the GLL-type line search typically requires stronger conditions to guarantee the desired convergence properties (see, e.g., \cite{qp2023convergence,qtpq2025gll}), we instead employ the ZH-type line search in nexPGA. As will be shown later, this choice allows us to establish strong convergence guarantees under weaker assumptions while maintaining comparable, or even superior, numerical performance as shown in Section \ref{sec-num-exp}.

The nexPGA in Algorithm \ref{algo-nexPGA} also provides a flexible algorithmic framework that not only encompasses and complements many existing methods but also facilitates the development of new potentially accelerated variants that incorporate both the ZH-type nonmonotone line search strategy and the extrapolation step. Below, we highlight several representative examples. When $P_2\equiv0$, nexPGA recovers the nonmonotone proximal gradient method studied in \cite{kl2025convergence} by setting $\delta=0$, and reduces to the monotone proximal gradient method studied in \cite{km2022convergence} by additionally setting $p_k\equiv1$. Beyond these recoveries, nexPGA naturally introduces new variants of these methods by integrating an extrapolation step, which potentially yield better practical performance as shown in Section \ref{sec-num-exp}. For example, in the case where $P_1$ is convex and $f$ is convex with a global Lipschitz continuous gradient, nexPGA, by incorporating the ZH-type line search strategy, gives an enhanced variant of the proximal difference-of-convex algorithm with extrapolation (pDCAe) proposed in \cite{wcp2018proximal}.

Finally, we would like to emphasize that nexPGA is developed \textit{without} assuming the global Lipschitz continuity of $\nabla f$. This line of research has recently attracted increasing attention in the study of (non)monotone PG methods \textit{without} extrapolation; see \cite{d2023proximal,jkm2023convergence,jw2024advances,kl2025convergence,km2022convergence}. Our nexPGA complements these approaches by providing a unified and problem-parameter-free algorithmic framework that incorporates an extrapolation step. Moreover, similar to the aforementioned works, the convergence analysis of nexPGA will be established \textit{without} requiring any explicit or implicit boundedness assumptions on the generated sequence $\{\bm{x}^k\}$. Specifically, under milder assumptions together with the KL property and exponent, we establish the global convergence of the entire sequence as well as convergence rates for both the objective values and the iterates. These results extend and strengthen the relevant theoretical results established in recent works \cite{d2023proximal,jkm2023convergence,jw2024advances,kl2025convergence,km2022convergence}. 

To clearly highlight the distinctions between nexPGA and existing nonmonotone PG methods developed under similar settings, we provide a detailed comparison in Table \ref{Table-Comp}. To the best of our knowledge, the proposed nexPGA is the first \textit{extrapolated} PG-type algorithm developed under these weaker assumptions.

\begin{table}[ht]
\caption{A comparison of our work with recent studies on the nonmonotone proximal gradient method under the local Lipschitz continuity assumption on the gradient, without requiring the boundedness of iterates. In the table, ``nls-type'' denotes the type of the nonmonotone line search used, ``assum on $P_2$'' denotes the assumption imposed on $P_2$, ``extra'' denotes whether the extrapolation step is allowed. Moreover, ``full seq conv'', ``seq rate'', and ``obj rate'' denote whether the convergence of the whole sequence, the convergence rate of the sequence, and the convergence rate of the objective function value sequence are established, respectively. In particular, the symbol ``\ding{51}$^P$'' indicates that the corresponding property is only partially established, either without considering the KL exponent $\theta\in(\frac{1}{2},1)$ or without providing proofs.}\label{Table-Comp}
\centering \tabcolsep 3.5pt
\scalebox{0.96}{
\begin{tabular}{ccccccc}
\hline
Reference & nls-type & \makecell[c]{assum \\ on $P_2$} & \makecell[c]{extra}
& \makecell[c]{full \\ seq conv} & \makecell[c]{seq rate} & \makecell[c]{obj rate}  \\
\hline
(Kanzow\,\&\,Mehlitz, 2022) \cite{km2022convergence} & GLL & 0 & \ding{55} & \ding{55} & \ding{55} & \ding{55} \\
(De Marchi, 2023) \cite{d2023proximal} & ZH & 0 & \ding{55} & \ding{55} & \ding{55} & \ding{55} \\
(Kanzow\,\&\,Lehmann, 2025) \cite{kl2025convergence} & ZH & 0 & \ding{55} & \ding{51} & \ding{51}$^P$ & \ding{51}$^P$ \\
(Jia\,\&\,Wang, 2024) \cite{jw2024advances} & GLL/ZH & 0 & \ding{55} & \ding{51} &\ding{51}$^P$ & \ding{51}$^P$ \\
This work & ZH & cvx & \ding{51} & \ding{51} & \ding{51} & \ding{51} \\
\hline
\end{tabular}}
\end{table}

We start the convergence analysis by making some blanket technical assumptions.

\begin{assumption}\label{assum-funs1}
Problem \eqref{ge-DC-problem} satisfies the following assumptions.
\begin{itemize}
\item[\bf{ B1.}] $f: \mathbb{R}^n \rightarrow \mathbb{R}$ is continuously differentiable; $\nabla f$ is locally Lipschitz continuous.
\item[\bf{ B2.}] $P_1: \mathbb{R}^n \rightarrow (-\infty,\infty]$ is proper, closed and prox-bounded, i.e., there exists some $\gamma > 0$ such that $\Delta\coloneqq\inf\left\{P_1 + \frac{\gamma}{2}\|\cdot\|^2\right\} > -\infty$; its proximal mapping is easy to compute.
\item[\bf{ B3.}] $P_2:\mathbb{R}^n\to\mathbb{R}$ is a convex function.
\item[\bf{ B4.}] $F$ is bounded from below on $\mathrm{dom}\,P_1$.
\end{itemize}
\end{assumption}

Assumption \ref{assum-funs1}2, which is also adopted in \cite{d2023proximal}, ensures that the subproblem \eqref{nexPGA-subpro} is well defined for all sufficiently large $\gamma_{k,i}$ and admits an easily computable minimizer $\bm{x}^{k,i}$, although such a minimizer need not be unique. Notably, Assumption \ref{assum-funs1}2 is weaker than those imposed in several existing studies on proximal-gradient-type algorithms, where $P_1$ is assumed to be either convex (see, e.g., \cite{gtt2018DC,tft2022new,wcp2018proximal}) or bounded below by an affine function (see, e.g., \cite{jkm2023convergence,kl2025convergence,km2022convergence}). Later, we will show that, under Assumptions \ref{assum-funs1}1 and \ref{assum-funs1}2, the line search criterion \eqref{nexPGA-lscond}, and hence the proposed algorithm, is also well defined. In addition, under Assumption \ref{assum-funs1}, one can show that any local minimizer $\widehat{\bm{x}}$ of problem \eqref{ge-DC-problem} satisfies
\begin{equation*}
0 \in \partial F(\widehat{\bm{x}})
= \nabla f(\widehat{\bm{x}}) + \partial\left(P_1-P_2\right)(\widehat{\bm{x}})
\subseteq \nabla f(\widehat{\bm{x}}) + \partial P_1(\widehat{\bm{x}})-\partial P_2(\widehat{\bm{x}}),
\end{equation*}
where the first inclusion follows from the generalized Fermat's rule \cite[Theorem 10.1]{rw1998variational}, the equality follows from \cite[Exercise 8.8(c)]{rw1998variational}, and the last inclusion follows from \cite[Corollary~3.4]{mny2006frechet}. We then define a stationary point of problem \eqref{ge-DC-problem} as follows.
\begin{definition}[\textbf{Stationary point}]\label{def-stationary}
We say that $\bm{x}^*$ is a stationary point of problem \eqref{ge-DC-problem} if $\bm{x}^*\in\mathrm{dom}\,F$ and it satisfies
\begin{equation*}
0 \in \nabla f\left(\bm{x}^*\right)
+ \partial P_1\left(\bm{x}^*\right)
- \partial P_2\left(\bm{x}^*\right).
\end{equation*}
In this paper, we denote the set of all stationary points of problem \eqref{ge-DC-problem} by $\mathcal{S}$.
\end{definition}

We next establish an asymptotic sufficient descent property, which plays a key role in ensuring the well-definedness of the ZH-type line search criterion \eqref{nexPGA-lscond}.

\begin{lemma}[\textbf{Asymptotic sufficient descent property}]\label{lem-descent-H-o}
Suppose that Assumptions \ref{assum-para} and \ref{assum-funs1} hold. For each $k \geq 0$, if $\bm{x}^{k, i}-\bm{y}^{k,i}\to0$ and $\bm{x}^k-\bm{y}^{k,i}\to0$ as $i\to\infty$, and $\beta_{k,i} \leq \sqrt{\frac{\delta\overline{\gamma}_{k-1}}{8\gamma_{k,i}}}$ holds for all sufficiently large $i$, then
\begin{equation*}
H_\delta(\bm{x}^{k,i}, \bm{x}^k, \gamma_{k,i})-H_\delta(\bm{x}^k, \bm{x}^{k-1}, \overline{\gamma}_{k-1})
\leq -\textstyle{\frac{(1-\delta)\gamma_{k,i}}{8}}\|\bm{x}^{k,i}-\bm{x}^k\|^2
\end{equation*}
holds for all sufficiently large $i$.
\end{lemma}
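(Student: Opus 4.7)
The plan is to reduce the claimed asymptotic inequality to an algebraic estimate involving $\|\bm{x}^{k,i}-\bm{y}^{k,i}\|$, $\|\bm{y}^{k,i}-\bm{x}^k\|$ and $\|\bm{x}^k-\bm{x}^{k-1}\|$, and then to absorb all constants involving the (a priori unknown) local Lipschitz modulus of $\nabla f$ by exploiting the geometric growth $\gamma_{k,i+1}=\tau\gamma_{k,i}\to\infty$ guaranteed by $\tau>1$. The three standard tools I would combine are: (a) the descent lemma for $f$, which is available because the hypothesis $\bm{x}^{k,i}-\bm{y}^{k,i}\to 0$ and $\bm{x}^k-\bm{y}^{k,i}\to 0$ confines $\bm{x}^{k,i}$, $\bm{y}^{k,i}$ and $\bm{x}^k$ to a fixed compact neighborhood $\mathcal{N}_k$ of $\bm{x}^k$, on which Assumption \ref{assum-funs1}1 furnishes a Lipschitz constant $L_k$ of $\nabla f$; (b) the variational inequality coming from the optimality of $\bm{x}^{k,i}$ in the strongly convex subproblem \eqref{nexPGA-subpro}, tested against $\bm{x}^k$; and (c) the convex subgradient inequality $P_2(\bm{x}^{k,i})\ge P_2(\bm{x}^k)+\langle\bm{\xi}^k,\bm{x}^{k,i}-\bm{x}^k\rangle$.

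Concretely, applying the descent lemma both as an upper bound on $f(\bm{x}^{k,i})$ and as a lower bound on $f(\bm{x}^k)$, both anchored at $\bm{y}^{k,i}$, and subtracting gives
\begin{equation*}
f(\bm{x}^{k,i})-f(\bm{x}^k)\le\langle\nabla f(\bm{y}^{k,i}),\bm{x}^{k,i}-\bm{x}^k\rangle+\tfrac{L_k}{2}\|\bm{x}^{k,i}-\bm{y}^{k,i}\|^2+\tfrac{L_k}{2}\|\bm{y}^{k,i}-\bm{x}^k\|^2.
\end{equation*}
Testing \eqref{nexPGA-subpro} against $\bm{x}^k$ and invoking the subgradient inequality for $P_2$ yield bounds on $P_1(\bm{x}^{k,i})-P_1(\bm{x}^k)$ and $-P_2(\bm{x}^{k,i})+P_2(\bm{x}^k)$ whose sum, added to the above, makes the terms involving $\nabla f(\bm{y}^{k,i})$ and $\bm{\xi}^k$ telescope to zero, leaving
\begin{equation*}
F(\bm{x}^{k,i})-F(\bm{x}^k)\le\tfrac{L_k-\gamma_{k,i}}{2}\|\bm{x}^{k,i}-\bm{y}^{k,i}\|^2+\tfrac{L_k+\gamma_{k,i}}{2}\|\bm{y}^{k,i}-\bm{x}^k\|^2.
\end{equation*}
The target can be rewritten as $F(\bm{x}^{k,i})-F(\bm{x}^k)+\tfrac{\gamma_{k,i}}{8}\|\bm{x}^{k,i}-\bm{x}^k\|^2\le\tfrac{\delta\overline{\gamma}_{k-1}}{8}\|\bm{x}^k-\bm{x}^{k-1}\|^2$. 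I would finish by expanding $\|\bm{x}^{k,i}-\bm{x}^k\|^2\le 2\|\bm{x}^{k,i}-\bm{y}^{k,i}\|^2+2\|\bm{y}^{k,i}-\bm{x}^k\|^2$ and using the identity $\|\bm{y}^{k,i}-\bm{x}^k\|^2=\beta_{k,i}^2\|\bm{x}^k-\bm{x}^{k-1}\|^2\le\tfrac{\delta\overline{\gamma}_{k-1}}{8\gamma_{k,i}}\|\bm{x}^k-\bm{x}^{k-1}\|^2$. After collecting terms, the coefficient of $\|\bm{x}^{k,i}-\bm{y}^{k,i}\|^2$ becomes $\tfrac{2L_k-\gamma_{k,i}}{4}$, while the budget for $\|\bm{x}^k-\bm{x}^{k-1}\|^2$ is tightened to $\tfrac{\delta\overline{\gamma}_{k-1}}{8}\cdot\tfrac{2L_k+3\gamma_{k,i}}{4\gamma_{k,i}}$. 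Choosing $i$ large enough so that $\gamma_{k,i}\ge 2L_k$ simultaneously forces the first coefficient to be nonpositive and the second factor to be at most $1$, completing the proof.

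The main obstacle is the tight constant accounting in this last step: the weight $\tfrac{\delta\gamma}{8}$ built into the definition \eqref{definition-H-delta} of $H_\delta$, the slack $\tfrac{1}{8\gamma_{k,i}}$ in the hypothesized bound on $\beta_{k,i}^2$, and the factor $\tfrac{1}{4}$ produced by Young's inequality for $\|\bm{x}^{k,i}-\bm{x}^k\|^2$ must align so that an $O(1/\gamma_{k,i})$ residue remains for absorbing the unknown modulus $L_k$; concretely, the gap $\tfrac{4}{32}-\tfrac{3}{32}=\tfrac{1}{32}$ between budget and asymptotic cost is exactly what makes $\gamma_{k,i}\ge 2L_k$ the precise threshold, and a coarser choice of the weighting inside $H_\delta$ or of the bound on $\beta_{k,i}$ would force a reconsideration of the ZH-style descent inequality. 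A secondary concern is ensuring that $\mathcal{N}_k$ can be chosen uniformly in $i$, but this is immediate from the stated convergences $\bm{x}^{k,i}-\bm{y}^{k,i}\to 0$ and $\bm{x}^k-\bm{y}^{k,i}\to 0$, which already imply that $\bm{y}^{k,i}$ does not escape along the direction $\bm{x}^k-\bm{x}^{k-1}$.
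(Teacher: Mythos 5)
Your proposal is correct and follows essentially the same route as the paper's own proof: both test the optimality of $\bm{x}^{k,i}$ in \eqref{nexPGA-subpro} against $\bm{x}^k$, apply a local descent lemma for $f$ (justified by the hypotheses $\bm{x}^{k,i}-\bm{y}^{k,i}\to 0$ and $\bm{x}^k-\bm{y}^{k,i}\to 0$, which confine all points to a compact ball where $\nabla f$ is Lipschitz), invoke the convex subgradient inequality for $P_2$, and sum to obtain a bound on $F(\bm{x}^{k,i})-F(\bm{x}^k)$ purely in terms of $\|\bm{x}^{k,i}-\bm{y}^{k,i}\|$ and $\|\bm{x}^k-\bm{y}^{k,i}\|$. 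The only genuine difference is in the final bookkeeping: the paper splits the $\gamma_{k,i}$-quadratics via a $\|a+b\|^2$ expansion followed by Cauchy--Schwarz and Young's inequality and then absorbs unspecified $\mathcal{O}$-terms using $\gamma_{k,i}\to\infty$, whereas you instead bound $\|\bm{x}^{k,i}-\bm{x}^k\|^2 \le 2\|\bm{x}^{k,i}-\bm{y}^{k,i}\|^2 + 2\|\bm{y}^{k,i}-\bm{x}^k\|^2$ from the start and keep an explicit local Lipschitz constant $L_k$, which buys you the concrete, transparent threshold $\gamma_{k,i}\ge 2L_k$ at which both the $\|\bm{x}^{k,i}-\bm{y}^{k,i}\|^2$-coefficient $\tfrac{2L_k-\gamma_{k,i}}{4}$ turns nonpositive and the budget ratio $\tfrac{2L_k+3\gamma_{k,i}}{4\gamma_{k,i}}$ falls to at most $1$. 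Your accounting is arguably cleaner, and your closing observation about the slack $\tfrac{4}{32}-\tfrac{3}{32}=\tfrac{1}{32}$ being exactly what is available to absorb $L_k$ correctly identifies why the constant $\tfrac{\delta\gamma}{8}$ in \eqref{definition-H-delta} and the bound $\beta_{k,i}^2\le\tfrac{\delta\overline{\gamma}_{k-1}}{8\gamma_{k,i}}$ are calibrated as they are.
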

\begin{proof}
First, since $\bm{x}^{k,i}$ is a solution of the subproblem \eqref{nexPGA-subpro} with $\gamma_{k,i}$ and $\bm{y}^{k,i}$, we have that 
\begin{equation*}
\begin{aligned}
&\quad\langle\nabla f(\bm{y}^{k,i})-\bm{\xi}^k, \,\bm{x}^{k, i}-\bm{y}^{k,i}\rangle
+ \textstyle{\frac{\gamma_{k,i}}{2}}\|\bm{x}^{k, i}-\bm{y}^{k,i}\|^2+P_1(\bm{x}^{k, i})\\
&\leq \langle\nabla f(\bm{y}^{k,i})-\bm{\xi}^k, \,\bm{x}^k-\bm{y}^{k,i}\rangle
+ \textstyle{\frac{\gamma_{k,i}}{2}}\|\bm{x}^k-\bm{y}^{k,i}\|^2+P_1(\bm{x}^k),
\end{aligned}
\end{equation*}
which implies that
\begin{equation*}
P_1(\bm{x}^{k,i}) - P_1(\bm{x}^k)
\leq - \langle\nabla f(\bm{y}^{k,i})-\bm{\xi}^k, \,\bm{x}^{k,i}-\bm{x}^k\rangle
- \textstyle{\frac{\gamma_{k,i}}{2}}\|\bm{x}^{k,i}-\bm{y}^{k,i}\|^2
+ \frac{\gamma_{k,i}}{2}\|\bm{x}^k-\bm{y}^{k,i}\|^2.
\end{equation*}
Then, from the local Lipschitz continuity of $\nabla f$ and $-\nabla f$ and the given assumptions that $\bm{x}^{k, i}-\bm{y}^{k,i}\to0$ and $\bm{x}^k-\bm{y}^{k,i}\to0$ as $i\to\infty$, one can verify that
\begin{equation*}
f(\bm{x}^{k,i})-f(\bm{y}^{k,i}) \leq \langle\nabla f(\bm{y}^{k,i}), \,\bm{x}^{k, i}-\bm{y}^{k,i}\rangle
+ \mathcal{O}(\|\bm{x}^{k, i}-\bm{y}^{k,i}\|^2)
\end{equation*}
and
\begin{equation*}
(-f(\bm{x}^k))-(-f(\bm{y}^{k,i})) \leq \langle-\nabla f(\bm{y}^{k,i}), \,\bm{x}^k-\bm{y}^{k,i}\rangle
+ \mathcal{O}(\|\bm{x}^k-\bm{y}^{k,i}\|^2)
\end{equation*}
for all sufficiently large $i$. Moreover, by the convexity of $P_2$ and $\bm{\xi}^k\in\partial P_2(\bm{x}^k)$, we get
\begin{equation*}
P_2(\bm{x}^{k,i}) \geq P_2(\bm{x}^k) + \langle\bm{\xi}^k, \,\bm{x}^{k,i}-\bm{x}^{k}\rangle
~~ \Longleftrightarrow ~~
-P_2(\bm{x}^{k,i}) - (-P_2(\bm{x}^k)) \leq - \langle\bm{\xi}^k, \,\bm{x}^{k,i}-\bm{x}^{k}\rangle.
\end{equation*}
Summing the above four relations, we see that  
\begin{equation*}
\begin{aligned}
&\quad F(\bm{x}^{k, i})-F(\bm{x}^k)\\
&= f(\bm{x}^{k,i}) + P_1(\bm{x}^{k,i}) - P_2(\bm{x}^{k,i})
- \big[f(\bm{x}^k) + P_1(\bm{x}^k) - P_2(\bm{x}^k)\big]   \\
&\leq -{\textstyle\frac{\gamma_{k,i}}{2}}\|\bm{x}^{k,i}-\bm{y}^{k,i}\|^2
+ {\textstyle\frac{\gamma_{k,i}}{2}}\|\bm{x}^k-\bm{y}^{k,i}\|^2
+ \mathcal{O}(\|\bm{x}^{k,i}-\bm{y}^{k,i}\|^2) + \mathcal{O}(\|\bm{x}^k-\bm{y}^{k,i}\|^2)  \\
&= -{\textstyle\frac{\gamma_{k,i}}{4}}\|\bm{x}^{k,i}-\bm{x}^k
+\bm{x}^k-\bm{y}^{k,i}\|^2
+ {\textstyle\frac{\gamma_{k,i}}{4}}\|\bm{x}^k-\bm{y}^{k,i}\|^2
+ \mathcal{O}(\|\bm{x}^{k,i}-\bm{y}^{k,i}\|^2)    \\
&\qquad
+ \mathcal{O}(\|\bm{x}^k-\bm{y}^{k,i}\|^2)
- {\textstyle\frac{\gamma_{k,i}}{4}}\|\bm{x}^{k,i}-\bm{y}^{k,i}\|^2
+ {\textstyle\frac{\gamma_{k,i}}{4}}\|\bm{x}^k-\bm{y}^{k,i}\|^2   \\
&= -{\textstyle\frac{\gamma_{k,i}}{4}}\|\bm{x}^{k,i}-\bm{x}^k\|^2
- {\textstyle\frac{\gamma_{k,i}}{2}}\langle\bm{x}^{k,i}-\bm{x}^k,
\,\bm{x}^k-\bm{y}^{k,i}\rangle
+ \mathcal{O}(\|\bm{x}^{k,i}-\bm{y}^{k,i}\|^2)   \\
&\qquad
+ \mathcal{O}(\|\bm{x}^k-\bm{y}^{k,i}\|^2)
- {\textstyle\frac{\gamma_{k,i}}{4}}\|\bm{x}^{k,i}-\bm{y}^{k,i}\|^2
+ {\textstyle\frac{\gamma_{k,i}}{4}}\|\bm{x}^k-\bm{y}^{k,i}\|^2   \\
&\leq -{\textstyle\frac{\gamma_{k,i}}{4}}\|\bm{x}^{k,i}-\bm{x}^k\|^2
+ {\textstyle\frac{\gamma_{k,i}}{2}}
\|\bm{x}^{k,i}-\bm{x}^k\|\|\bm{x}^k-\bm{y}^{k,i}\|
+ \mathcal{O}(\|\bm{x}^{k,i}-\bm{y}^{k,i}\|^2)   \\
&\qquad
+ \mathcal{O}(\|\bm{x}^k-\bm{y}^{k,i}\|^2)
- {\textstyle\frac{\gamma_{k,i}}{4}}\|\bm{x}^{k,i}-\bm{y}^{k,i}\|^2
+ {\textstyle\frac{\gamma_{k,i}}{4}}\|\bm{x}^k-\bm{y}^{k,i}\|^2  \\
&\leq -{\textstyle\frac{\gamma_{k,i}}{8}}\|\bm{x}^{k,i}-\bm{x}^k\|^2
+ {\textstyle\frac{3\gamma_{k,i}}{4}}\|\bm{x}^k-\bm{y}^{k,i}\|^2  \\
&\qquad
+ \mathcal{O}(\|\bm{x}^{k,i}-\bm{y}^{k,i}\|^2)
+ \mathcal{O}(\|\bm{x}^k-\bm{y}^{k,i}\|^2)
- {\textstyle\frac{\gamma_{k,i}}{4}}\|\bm{x}^{k,i}-\bm{y}^{k,i}\|^2   \\
&= -{\textstyle\frac{\gamma_{k,i}}{8}}\|\bm{x}^{k,i}-\bm{x}^k\|^2
+ \gamma_{k,i}\|\bm{x}^k-\bm{y}^{k,i}\|^2   \\
&\qquad
+ \mathcal{O}(\|\bm{x}^{k,i}-\bm{y}^{k,i}\|^2)
+ \mathcal{O}(\|\bm{x}^k-\bm{y}^{k,i}\|^2)
-{\textstyle\frac{\gamma_{k,i}}{4}}\|\bm{x}^{k,i}-\bm{y}^{k,i}\|^2
- {\textstyle\frac{\gamma_{k,i}}{4}}\|\bm{x}^k-\bm{y}^{k,i}\|^2   \\
&\leq -{\textstyle\frac{\gamma_{k,i}}{8}}\|\bm{x}^{k,i}-\bm{x}^k\|^2
+ \gamma_{k,i}\beta_{k,i}^2\|\bm{x}^k-\bm{x}^{k-1}\|^2 \\
&\leq -{\textstyle\frac{(1-\delta)\gamma_{k,i}}{8}}\|\bm{x}^{k,i}-\bm{x}^k\|^2
- {\textstyle\frac{\delta\gamma_{k,i}}{8}}\|\bm{x}^{k,i}-\bm{x}^k\|^2
+ {\textstyle\frac{\delta\overline{\gamma}_{k-1}}{8}}\|\bm{x}^k-\bm{x}^{k-1}\|^2
\end{aligned}
\end{equation*}
holds for all sufficiently large $i$, where the third inequality follows from the relation $ab\leq\frac{a^2+b^2}{2}$ with $a:=\frac{\sqrt{{\gamma}_{k,i}}}{2}\|\bm{x}^{k,i}-\bm{x}^k\|$ and $b:=\sqrt{{\gamma}_{k,i}}\|\bm{x}^k-\bm{y}^{k,i}\|$, the second last inequality follows from \eqref{extrapolation-yki} and the fact that $\mathcal{O}(\|\bm{x}^{k,i}-\bm{y}^{k,i}\|^2)
+ \mathcal{O}(\|\bm{x}^k-\bm{y}^{k,i}\|^2)-{\textstyle\frac{\gamma_{k,i}}{4}}\|\bm{x}^{k,i}-\bm{y}^{k,i}\|^2- {\textstyle\frac{\gamma_{k,i}}{4}}\|\bm{x}^k-\bm{y}^{k,i}\|^2\leq0$ holds for all sufficiently large $i$ since $\bm{x}^{k, i}-\bm{y}^{k,i}\to0$, $\bm{x}^k-\bm{y}^{k,i}\to0$, $\gamma_{k,i}\to\infty$ as $i\to\infty$, the last inequality follows from the condition that $\beta_{k,i} \leq \sqrt{\frac{\delta\overline{\gamma}_{k-1}}{8\gamma_{k,i}}}$ holds for all sufficiently large $i$. Rearranging terms in the above relation and using the definition of 
$H_\delta$ in \eqref{definition-H-delta} yield the desired result.
\end{proof}

We highlight that the key difference between the asymptotic sufficient descent property established in Lemma \ref{lem-descent-H-o} and the common sufficient descent property (explicitly or implicitly used in, e.g., \cite[Eq.~(4.3)]{wcp2018proximal} and \cite[Lemma 3.1]{yang2024proximal}) lies in the fact that the former holds only for all sufficiently large $i$, when $\bm{x}^{k,i} - \bm{y}^{k,i} \to 0$ and $\bm{x}^k - \bm{y}^{k,i} \to 0$ as $i \to \infty$. This asymptotic nature arises because Lemma \ref{lem-descent-H-o} is established under the weaker assumption that $\nabla f$ is merely locally Lipschitz continuous, whereas the descent results in \cite{wcp2018proximal,yang2024proximal} require the global Lipschitz continuity of $\nabla f$. This relaxation makes Lemma \ref{lem-descent-H-o} more broadly applicable and suitable for a wider class of practical problems. Building upon this lemma, we proceed to establish the well-definedness of the ZH-type line search criterion \eqref{nexPGA-lscond}.

\begin{lemma}[\textbf{Well-definedness of the line search criterion \eqref{nexPGA-lscond}}]\label{lem-well-definedness}
Suppose that Assumptions \ref{assum-para} and \ref{assum-funs1} hold.
Then, for each $k\geq0$, the line search criterion \eqref{nexPGA-lscond} is satisfied after finitely many inner iterations.
\end{lemma}
\begin{proof}
We prove this lemma by contradiction and will divide the proof into four steps. Assume that there exists some $k\geq0$ such that the line search criterion \eqref{nexPGA-lscond} cannot be satisfied after finitely many inner iterations.

\textit{Step 1}. For this $k$, we first claim that
\begin{equation}\label{Rk-Hdelta}
\mathcal{R}_{k} \geq H_\delta(\bm{x}^k, \bm{x}^{k-1}, \overline{\gamma}_{k-1}).
\end{equation}
Indeed, when $k=0$, it follows from the initial settings of $\bm{x}^{-1}=\bm{x}^0$ and $\overline{\gamma}_{-1}=\gamma_{\min}$ that $\mathcal{R}_0=F(\bm{x}^0)=H_\delta(\bm{x}^0,\bm{x}^{0},\gamma_{\min})$ and hence \eqref{Rk-Hdelta} holds. When $k\geq1$, we see that 
\begin{equation*}
\hspace{-2mm}
\begin{aligned}
\mathcal{R}_{k}
&=(1\!-\!p_{k})\mathcal{R}_{k-1}+p_kH_\delta(\bm{x}^k, \bm{x}^{k-1}, \overline{\gamma}_{k-1})  \\
&\geq (1\!-\!p_{k})\!\left(H_\delta(\bm{x}^k, \bm{x}^{k-1}, \overline{\gamma}_{k-1})
+ {\textstyle\frac{(1-\delta)\overline{\gamma}_{k-1}}{8}}\|\bm{x}^k-\bm{x}^{k-1}\|^2\right)
+ p_kH_\delta(\bm{x}^k, \bm{x}^{k-1}, \overline{\gamma}_{k-1})  \\
&\geq H_\delta(\bm{x}^k, \bm{x}^{k-1}, \overline{\gamma}_{k-1}),
\end{aligned}
\end{equation*}
where the first inequality follows from the fact that the line search criterion \eqref{nexPGA-lscond} is satisfied at the $(k\!-\!1)$-th iteration.

\textit{Step 2}. We next claim that
\begin{equation}\label{xki-yki-xk-limit}
\lim\limits_{i\rightarrow\infty}\|\bm{x}^{k,i}-\bm{y}^{k,i}\|=0
\quad\mathrm{and}\quad
\lim\limits_{i\rightarrow\infty}\|\bm{x}^k-\bm{y}^{k,i}\|=0.
\end{equation}
It follows from \eqref{extrapolation-yki} and $\beta_{k,i}\downarrow0$ as $i\rightarrow\infty$ that $\lim\limits_{i\rightarrow\infty}\|\bm{y}^{k,i}-\bm{x}^k\| = 0$. Moreover, since $\bm{x}^{k,i}$ is an optimal solution of the subproblem \eqref{nexPGA-subpro} with $\gamma_{k,i}$ and $\bm{y}^{k,i}$, we see that
\begin{equation*}
\begin{aligned}
&\quad \langle\nabla f(\bm{y}^{k,i})-\bm{\xi}^k, \,\bm{x}^{k, i}-\bm{y}^{k,i}\rangle
+ \textstyle{\frac{\gamma_{k,i}}{2}\|\bm{x}^{k,i}-\bm{y}^{k,i}\|^2+P_1(\bm{x}^{k, i})}\\
&\leq \langle\nabla f(\bm{y}^{k,i})-\bm{\xi}^k, \,\bm{x}^k-\bm{y}^{k,i}\rangle
+ \textstyle{\frac{\gamma_{k,i}}{2}\|\bm{x}^k-\bm{y}^{k,i}\|^2+P_1(\bm{x}^k)} \\
&=-\beta_{k,i}\langle\nabla f(\bm{y}^{k,i})-\bm{\xi}^k, \,\bm{x}^k-\bm{x}^{k-1}\rangle
+ \textstyle{\frac{\gamma_{k,0}\beta_{k,0}^2[\tau\eta^2]^i}{2}} \|\bm{x}^k-\bm{x}^{k-1}\|^2+P_1(\bm{x}^k),
\end{aligned}
\end{equation*}
where the equality follows from the updating rules of $\bm{y}^{k,i}$, $\gamma_{k,i}$ and $\beta_{k,i}$. This, together with Assumption \ref{assum-funs1}2 ($P_1$ is prox-bounded with $\Delta\coloneqq\inf\left\{P_1 + \frac{\gamma}{2}\|\cdot\|^2\right\} > -\infty$ for some $\gamma>0$), implies that 
\begin{equation} \label{opt-con}
\begin{aligned}
&\quad \langle\nabla f(\bm{y}^{k,i})-\bm{\xi}^k - \gamma\bm{y}^{k,i}, \,\bm{x}^{k, i}-\bm{y}^{k,i}\rangle
+ \textstyle{\frac{\gamma_{k,i} - \gamma}{2}}\|\bm{x}^{k,i}-\bm{y}^{k,i}\|^2
- \frac{\gamma}{2}\|\bm{y}^{k,i}\|^2 + \Delta \\
&=\langle\nabla f(\bm{y}^{k,i})-\bm{\xi}^k , \,\bm{x}^{k, i}-\bm{y}^{k,i}\rangle
+ \textstyle{\frac{\gamma_{k,i}}{2}}\|\bm{x}^{k,i}-\bm{y}^{k,i}\|^2
- \frac{\gamma}{2}\|\bm{x}^{k,i}\|^2 + \Delta \\
&\leq \langle\nabla f(\bm{y}^{k,i})-\bm{\xi}^k, \,\bm{x}^{k, i}-\bm{y}^{k,i}\rangle
+ \textstyle{\frac{\gamma_{k,i}}{2}\|\bm{x}^{k,i}-\bm{y}^{k,i}\|^2+P_1(\bm{x}^{k, i})}\\
&\leq -\beta_{k,i}\langle\nabla f(\bm{y}^{k,i})-\bm{\xi}^k, \,\bm{x}^k-\bm{x}^{k-1}\rangle
+ \textstyle{\frac{\gamma_{k,0}\beta_{k,0}^2[\tau\eta^2]^i}{2}} \|\bm{x}^k-\bm{x}^{k-1}\|^2+P_1(\bm{x}^k).
\end{aligned}
\end{equation}
Using \eqref{opt-con}, we will prove $\lim\limits_{i\rightarrow\infty}\bm{x}^{k,i}-\bm{y}^{k,i}=\bm{0}$ by contradiction. Assume that it does not hold. Then, there must exist a subsequence $\{(\bm{x}^{k,i_j},\bm{y}^{k,i_j})\}$ such that for some $\varepsilon > 0$, we have $\|\bm{x}^{k,i_j}-\bm{y}^{k,i_j}\| \ge \varepsilon$ for all $j$. Since $\bm{y}^{k,i}\to\bm{x}^k$, $\nabla f$ is continuous (by Assumption \ref{assum-funs1}1), $\tau\eta^2<1$ (by Assumption \ref{assum-para}), and $\gamma_{k,i}\uparrow\infty$, $\beta_{k,i}\downarrow0$ as $i\to\infty$, we see that, along the subsequence $\{(\bm{x}^{k,i_j},\bm{y}^{k,i_j})\}$, the left-hand side of \eqref{opt-con} would go to infinity, while the right-hand side of \eqref{opt-con} converges to $P_1(\bm{x}^k)$. This leads to a contradiction. Thus, we have $\lim\limits_{i\rightarrow\infty}\|\bm{x}^{k,i}-\bm{y}^{k,i}\|=0$ and prove \eqref{xki-yki-xk-limit}.

\textit{Step 3}. Now, we show that 
\begin{equation}\label{beta_ki}
\beta_{k,i} \leq \textstyle{\sqrt{\frac{\delta\overline{\gamma}_{k-1}}{8\gamma_{k,i}}}}
\end{equation}
holds for all sufficiently large $i$. Indeed, it follows from $\tau\eta^2<1$ (by Assumption \ref{assum-para}) and the updating rules of $\beta_{k,i}$ and $\gamma_{k,i}$ that 
\begin{equation*}
\gamma_{k,i}\beta_{k,i}^2
= \gamma_{k,0}\beta_{k,0}^2(\tau\eta^2)^i
\leq \gamma_{\max}\beta_{\max}^2\delta^2(\tau\eta^2)^i
\leq \textstyle{\frac{\delta\gamma_{\min}}{8}}
\leq \textstyle{\frac{\delta\overline{\gamma}_{k-1}}{8}}
\end{equation*}
holds for all sufficiently large $i$. This then yields the desired result.

\textit{Step 4}. Finally, using \eqref{Rk-Hdelta}, \eqref{xki-yki-xk-limit} and \eqref{beta_ki},  we have from Lemma \ref{lem-descent-H-o} that 
\begin{equation*}
\begin{aligned}
H_\delta(\bm{x}^{k,i},\bm{x}^k,\gamma_{k,i}) - \mathcal{R}_k
\leq H_\delta(\bm{x}^{k,i},\bm{x}^k,\gamma_{k,i})
- H_\delta(\bm{x}^k,\bm{x}^{k-1},\overline{\gamma}_{k-1})
\leq -\textstyle{\frac{(1-\delta)\gamma_{k,i}}{8}}\|\bm{x}^{k, i}-\bm{x}^k\|^2
\end{aligned}
\end{equation*}
holds for all sufficiently large  $i$. This implies that, for this $k$, criterion \eqref{nexPGA-lscond} must hold for all sufficiently large $i$, leading to a contradiction. Thus, we complete the proof.
\end{proof}

Based on Lemma \ref{lem-well-definedness}, we further establish the following
properties.

\begin{proposition}\label{pro-basic-property}
Suppose that Assumptions \ref{assum-para} and \ref{assum-funs1} hold. Let $\{\bm{x}^k\}_{k=-1}^{\infty}$ and $\{\overline{\gamma}_k\}_{k=-1}^{\infty}$ be the sequences generated by the nexPGA in Algorithm \ref{algo-nexPGA}. Then, the following statements hold. \vspace{1mm}
\begin{itemize}
\item[{\rm(i)}] $\mathcal{R}_{k} \geq H_{\delta}(\bm{x}^k, \bm{x}^{k-1}, \overline{\gamma}_{k-1})$ for all $k\geq0$; \vspace{0.5mm}
\item[{\rm(ii)}] The sequence $\{\mathcal{R}_k\}$ is non-increasing and $\zeta:=\lim\limits_{k\to\infty}\mathcal{R}_k$ exists;  
\item[{\rm(iii)}] The sequence $\left\{H_{\delta}(\bm{x}^k, \bm{x}^{k-1}, \overline{\gamma}_{k-1})\right\}$ converges to the same $\zeta$;  \vspace{1mm}
\item[{\rm(iv)}] $\lim_{k\rightarrow\infty}\|\bm{x}^{k+1}-\bm{x}^{k}\|=0$;  \vspace{1mm}
\item[{\rm(v)}] $\{\bm{x}^k\}\subseteq\left\{\bm{x}\in\mathbb{R}^n \mid F(\bm{x}) \leq F(\bm{x}^0)\right\}$.
\end{itemize}
\end{proposition}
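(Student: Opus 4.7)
The plan is to prove (i)--(v) sequentially, each building on the preceding ones, using only the update rule for $\mathcal{R}_k$ in Step~2 and the line search criterion \eqref{nexPGA-lscond}, whose finite termination is guaranteed by Lemma~\ref{lem-well-defineness}.

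I would prove (i) by induction on $k$. The base case $k=0$ is immediate from the initializations $\bm{x}^{-1}=\bm{x}^0$, $\overline{\gamma}_{-1}=\gamma_{\min}$, and $\mathcal{R}_0=F(\bm{x}^0)$, which give $\mathcal{R}_0 = H_\delta(\bm{x}^0,\bm{x}^0,\gamma_{\min})$. For $k\geq 1$, I would combine the induction hypothesis with the criterion \eqref{nexPGA-lscond} satisfied at iteration $k-1$, which yields $\mathcal{R}_{k-1}\geq H_\delta(\bm{x}^k,\bm{x}^{k-1},\overline{\gamma}_{k-1})+\frac{(1-\delta)\overline{\gamma}_{k-1}}{8}\|\bm{x}^k-\bm{x}^{k-1}\|^2$, and then substitute into $\mathcal{R}_k=(1-p_k)\mathcal{R}_{k-1}+p_k H_\delta(\bm{x}^k,\bm{x}^{k-1},\overline{\gamma}_{k-1})$. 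This is essentially the computation already appearing in Step~1 of the proof of Lemma~\ref{lem-well-defineness}.

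Statements (ii)--(iv) then all flow from the master descent inequality obtained by combining (i), criterion \eqref{nexPGA-lscond}, and the update rule, namely
\begin{equation*}
\mathcal{R}_{k+1} \leq \mathcal{R}_k - \frac{p_{k+1}(1-\delta)\overline{\gamma}_k}{8}\|\bm{x}^{k+1}-\bm{x}^k\|^2.
\end{equation*}
This gives the monotonicity of $\{\mathcal{R}_k\}$; a uniform lower bound follows because $H_\delta\geq F$ (since $\delta\geq 0$) and $F$ is bounded below on $\mathrm{dom}\,P_1$ by Assumption~B4, so $\mathcal{R}_k$, being a convex combination of such values starting from $F(\bm{x}^0)$, is bounded below. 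Monotone convergence then yields (ii). For (iii), I would rewrite the update rule as $\mathcal{R}_{k+1}-\mathcal{R}_k = p_{k+1}\bigl(H_\delta(\bm{x}^{k+1},\bm{x}^k,\overline{\gamma}_k)-\mathcal{R}_k\bigr)$; since $\mathcal{R}_{k+1}-\mathcal{R}_k\to 0$ and $p_{k+1}\geq p_{\min}>0$, I deduce $H_\delta(\bm{x}^{k+1},\bm{x}^k,\overline{\gamma}_k)\to\zeta$. Statement (iv) is then immediate: $F(\bm{x}^k)\leq H_\delta(\bm{x}^k,\bm{x}^{k-1},\overline{\gamma}_{k-1})\leq\mathcal{R}_k\leq\mathcal{R}_0=F(\bm{x}^0)$.

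Finally, for (v) I would telescope the master descent inequality: since $\overline{\gamma}_k\geq\gamma_{\min}$ (Step~(1d) only multiplies $\gamma_{k,0}\geq\gamma_{\min}$ by $\tau>1$) and $p_{k+1}\geq p_{\min}$, summing over $k$ yields $\sum_{k=0}^\infty\|\bm{x}^{k+1}-\bm{x}^k\|^2 \leq \frac{8(\mathcal{R}_0-\zeta)}{p_{\min}(1-\delta)\gamma_{\min}}<\infty$, hence $\|\bm{x}^{k+1}-\bm{x}^k\|\to 0$. The only mild subtlety throughout is tracking the asymmetry between $H_\delta$, which depends on consecutive pairs of iterates, and the running convex combination $\mathcal{R}_k$; once (i) is in hand this asymmetry is easy to manage, and nothing here needs the delicate local-Lipschitz machinery of Lemma~\ref{lem-descent-H-o}.
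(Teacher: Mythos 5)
Your proposal is correct and takes essentially the same approach as the paper: (i) is the inductive argument already embedded in Step 1 of the proof of Lemma \ref{lem-well-defineness}, (ii)--(iv) follow from the descent inequality $\mathcal{R}_{k+1} \leq \mathcal{R}_k - \tfrac{(1-\delta)p_{k+1}\overline{\gamma}_k}{8}\|\bm{x}^{k+1}-\bm{x}^k\|^2$ and Assumption B4, and (iii) rearranges the update rule exactly as the paper does. Your (v) telescopes the descent inequality rather than using the per-step bound plus $\mathcal{R}_k - \mathcal{R}_{k+1}\to 0$, but the two are equivalent one-liners.
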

\begin{proof}
\textit{Statement (i)}. It follows directly from \eqref{Rk-Hdelta} and the subsequent discussion.

\textit{Statement (ii)}. By Lemma \ref{lem-well-definedness}, the sequence $\{\bm{x}^k\}$ is well-defined. From the line search criterion \eqref{nexPGA-lscond}, we have
\begin{equation*}
H_{\delta}(\bm{x}^{k+1}, \bm{x}^k, \overline{\gamma}_{k}) - \mathcal{R}_k
\leq -\textstyle{\frac{(1-\delta)\overline{\gamma}_{k}}{8}}
\|\bm{x}^{k+1}-\bm{x}^k\|^2, \quad \forall\,k\geq0.
\end{equation*}
Using the update rule of $\mathcal{R}_k$, $\overline{\gamma}_{k}\geq\gamma_{k,0}\geq\gamma_{\min}$ and $p_{k+1}\in[p_{\min},1]$, we obtain 
\begin{equation}\label{Rk-descent}
\begin{aligned}
\mathcal{R}_{k+1}
&= (1-p_{k+1})\mathcal{R}_k
+ p_{k+1}H_\delta(\bm{x}^{k+1},\bm{x}^k,\overline{\gamma}_k)  \\
&\leq (1-p_{k+1})\mathcal{R}_k
+ p_{k+1}\left(\mathcal{R}_k-\textstyle{\frac{(1-\delta)\overline{\gamma}_{k}}{8}}
\|\bm{x}^{k+1}-\bm{x}^k\|^2\right)   \\
&= \mathcal{R}_k - \textstyle{\frac{(1-\delta)p_{k+1}\overline{\gamma}_{k}}{8}}
\|\bm{x}^{k+1}-\bm{x}^k\|^2 \leq \mathcal{R}_k
- \frac{(1-\delta)p_{\min}\gamma_{\min}}{8}\|\bm{x}^{k+1}-\bm{x}^k\|^2
\end{aligned}
\end{equation}
for all $k\geq0$. Hence, $\{\mathcal{R}_k\}$ is non-increasing. By Assumption \ref{assum-funs1}4, $\{\bm{x}^k\}\subset \operatorname{dom} P_1$, and the definition of $H_\delta$ in \eqref{definition-H-delta}, the sequence $\{H_{\delta}(\bm{x}^k, \bm{x}^{k-1}, \overline{\gamma}_{k-1})\}$ is bounded below. Thus, $\{\mathcal{R}_k\}$ is bounded below by statement (i), and hence convergent. In particular, $\zeta:=\lim_{k\to\infty}\mathcal{R}_k$ exists.

\textit{Statement (iii)}. By the update rule of $\mathcal{R}_k$ and $p_{k+1}\geq p_{\min}>0$, we have
\begin{equation*}
H_{\delta}(\bm{x}^{k+1}, \bm{x}^k, \overline{\gamma}_k)
= \mathcal{R}_k + \textstyle{\frac{1}{p_{k+1}}}(\mathcal{R}_{k+1}-\mathcal{R}_k),
\quad \forall\,k\geq0.
\end{equation*}
Taking $k\to\infty$, and using statement (ii) and $p_{k+1}\in[p_{\min},1]$ yields the desired result.

\textit{Statement (iv)}. From \eqref{Rk-descent}, it follows that
\begin{equation*}
\|\bm{x}^{k+1}-\bm{x}^k\|^2
\leq \textstyle{\frac{8}{(1-\delta)p_{\min}\gamma_{\min}}}
\left(\mathcal{R}_k-\mathcal{R}_{k+1}\right).
\end{equation*}
Together with statement (ii), this implies that $\|\bm{x}^{k+1}-\bm{x}^{k}\|\to0$.

\textit{Statement (v)}. By the definition of $H_{\delta}$ in \eqref{definition-H-delta}, statements (i)--(ii), and $\mathcal{R}_0=F(\bm{x}^0)$, we obtain 
\begin{equation*}
F(\bm{x}^k)\leq H_\delta(\bm{x}^k, \bm{x}^{k-1}, \overline{\gamma}_{k-1})
\leq \mathcal{R}_k\leq \mathcal{R}_0 = F(\bm{x}^0),
\quad \forall\,k\geq0.
\end{equation*}
Hence, $\{\bm{x}^k\}\subseteq\left\{\bm{x} \in \mathbb{R}^n \mid F(\bm{x}) \leq F(\bm{x}^0)\right\}$.
\end{proof}

We next show that any proximal parameter $\overline{\gamma}_k$ associated with a point $\bm{x}^{k+1}$ that lies locally around an accumulation point (if it exists) remains bounded. This result serves as the extrapolated counterpart of \cite[Corollary 3.1]{km2022convergence} and \cite[Lemma 4.1]{jkm2023convergence}.

\begin{proposition}\label{pro-gammak-bounded}
Suppose that Assumptions \ref{assum-para} and \ref{assum-funs1} hold. Let $\{\bm{x}^k\}$ be the sequence generated by the nexPGA in Algorithm \ref{algo-nexPGA}, which is well-defined due to Lemma \ref{lem-well-definedness}. Moreover, if the sequence $\{\bm{x}^k\}$ has an accumulation point $\bm{x}^*$. Then, for any $\rho>0$, there exists a positive constant $\overline{\gamma}_{\rho,\bm{x}^*}>0$ (depending on $\rho$ and $\bm{x}^*$) such that $\overline{\gamma}_k\leq\overline{\gamma}_{\rho,\bm{x}^*}$ for all $k\in\mathcal{J}_{\rho}(\bm{x}^*)
:=\{j\in\mathbb{N}:\bm{x}^{j+1}\in\mathcal{B}_{\rho}(\bm{x}^*)\}$ with $\mathcal{B}_{\rho}(\bm{x}^*):=\left\{\bm{x}\in\mathbb{R}^n:\|\bm{x}-\bm{x}^*\|\leq\rho\right\}$.
\end{proposition}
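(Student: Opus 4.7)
The plan is to argue by contradiction: suppose there exists a subsequence $\{k_j\}\subseteq\mathcal{J}_\rho(\bm{x}^*)$ with $\overline{\gamma}_{k_j}\to\infty$, and derive a contradiction by showing that criterion \eqref{nexPGA-lscond} would in fact have been satisfied at an earlier trial. Since $\overline{\gamma}_{k_j}=\gamma_{k_j,0}\,\tau^{i_{k_j}}$ with $\gamma_{k_j,0}\in[\gamma_{\min},\gamma_{\max}]$, the inner-loop count satisfies $i_{k_j}\to\infty$, so $i_{k_j}\geq 1$ for all large $j$, and the criterion must be violated at the penultimate trial $i=i_{k_j}-1$, at which
\[
\widetilde{\gamma}_j:=\gamma_{k_j,i_{k_j}-1}=\overline{\gamma}_{k_j}/\tau\to\infty,\qquad
\widetilde{\beta}_j:=\beta_{k_j,i_{k_j}-1}=\eta^{i_{k_j}-1}\beta_{k_j,0}\to 0.
\]

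My first step is to confine all relevant iterates to a common compact set. By definition, $\bm{x}^{k_j+1}\in\mathcal{B}_\rho(\bm{x}^*)$, and Proposition~\ref{pro-basic-property}(v) implies that, for all large $j$, both $\bm{x}^{k_j-1}$ and $\bm{x}^{k_j}$ lie in $\mathcal{B}_{\rho+1}(\bm{x}^*)$. Since $\widetilde{\beta}_j\leq\delta\beta_{\max}$ and $\|\bm{x}^{k_j}-\bm{x}^{k_j-1}\|\to 0$, the extrapolated point $\widetilde{\bm{y}}^{j}:=\bm{y}^{k_j,i_{k_j}-1}$ also lies in a bounded region and satisfies $\|\bm{x}^{k_j}-\widetilde{\bm{y}}^{j}\|=\widetilde{\beta}_j\|\bm{x}^{k_j}-\bm{x}^{k_j-1}\|\to 0$. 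On this bounded region I then extract: (i) a uniform local Lipschitz constant $L$ for $\nabla f$; (ii) a uniform bound on $\|\bm{\xi}^{k_j}\|$, via local boundedness of the subdifferential of the continuous convex function $P_2$; and (iii) an upper bound on $P_1(\bm{x}^{k_j})$, obtained from $F(\bm{x}^{k_j})\leq F(\bm{x}^0)$ (Proposition~\ref{pro-basic-property}(iv)) together with the continuity of $f$ and $P_2$.

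The main obstacle is showing that $\|\bm{x}^{k_j,i_{k_j}-1}-\widetilde{\bm{y}}^{j}\|\to 0$ \emph{without} a priori boundedness of $\bm{x}^{k_j,i_{k_j}-1}$. To handle this, I would use $\bm{x}^{k_j}$ as a comparator in the optimality of the subproblem \eqref{nexPGA-subpro}, invoke the affine lower bound on $P_1$ from Assumption~\ref{assum-funs1} (B2), and apply Young's inequality to absorb the linear term in $\bm{x}^{k_j,i_{k_j}-1}-\widetilde{\bm{y}}^{j}$ into a fraction of the quadratic term $\frac{\widetilde{\gamma}_j}{2}\|\bm{x}^{k_j,i_{k_j}-1}-\widetilde{\bm{y}}^{j}\|^2$. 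The uniform bounds from the previous step keep all constants uniform in $j$, and rearranging yields $\|\bm{x}^{k_j,i_{k_j}-1}-\widetilde{\bm{y}}^{j}\|^2=\mathcal{O}(1/\widetilde{\gamma}_j)\to 0$. Consequently, $\bm{x}^{k_j,i_{k_j}-1}$ eventually also lies in the same compact region.

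The final step is to replicate the chain of estimates from the proof of Lemma~\ref{lem-descent-H-o}, but now uniformly along the subsequence indexed by $j$: the $\mathcal{O}(\|\cdot\|^2)$ remainders coming from Taylor expansions of $f$ and $-f$ at $\widetilde{\bm{y}}^{j}$ become $\frac{L}{2}\|\cdot\|^2$ with the uniform $L$, and are absorbed by $\frac{\widetilde{\gamma}_j}{4}\|\cdot\|^2$ once $\widetilde{\gamma}_j$ is large. The auxiliary inequality $\widetilde{\beta}_j\leq\sqrt{\delta\overline{\gamma}_{k_j-1}/(8\widetilde{\gamma}_j)}$ holds for $j$ large, since $\widetilde{\gamma}_j\widetilde{\beta}_j^{\,2}=\gamma_{k_j,0}\beta_{k_j,0}^2(\tau\eta^2)^{i_{k_j}-1}\to 0$ while $\overline{\gamma}_{k_j-1}\geq\gamma_{\min}$, exactly as in Step~3 of the proof of Lemma~\ref{lem-well-defineness}. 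Lemma~\ref{lem-descent-H-o} then gives
\[
H_\delta(\bm{x}^{k_j,i_{k_j}-1},\bm{x}^{k_j},\widetilde{\gamma}_j)-H_\delta(\bm{x}^{k_j},\bm{x}^{k_j-1},\overline{\gamma}_{k_j-1})\leq -\frac{(1-\delta)\widetilde{\gamma}_j}{8}\|\bm{x}^{k_j,i_{k_j}-1}-\bm{x}^{k_j}\|^2
\]
for all large $j$; combining with Proposition~\ref{pro-basic-property}(i), $\mathcal{R}_{k_j}\geq H_\delta(\bm{x}^{k_j},\bm{x}^{k_j-1},\overline{\gamma}_{k_j-1})$, this shows that criterion \eqref{nexPGA-lscond} \emph{is} satisfied at $i=i_{k_j}-1$, contradicting the definition of $i_{k_j}$. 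Hence $\{\overline{\gamma}_k\}_{k\in\mathcal{J}_\rho(\bm{x}^*)}$ is bounded, which yields the required constant $\overline{\gamma}_{\rho,\bm{x}^*}$.
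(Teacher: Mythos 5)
Your proposal is correct and agrees with the paper's proof in its overall skeleton: argue by contradiction, pass to the penultimate inner trial $i = i_{k_j}-1$ (which exists since $\overline{\gamma}_{k_j}=\gamma_{k_j,0}\tau^{i_{k_j}}\to\infty$ and $\gamma_{k_j,0}\leq\gamma_{\max}$ force $i_{k_j}\to\infty$), and first confine $\widehat{\bm{x}}^{k_j}:=\bm{x}^{k_j,i_{k_j}-1}$, $\bm{x}^{k_j}$, $\bm{x}^{k_j-1}$ and $\widetilde{\bm{y}}^{j}$ to a compact neighborhood of $\bm{x}^*$ so as to extract a uniform Lipschitz constant $L$ for $\nabla f$. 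Your route to $\|\widehat{\bm{x}}^{k_j}-\widetilde{\bm{y}}^{j}\|\to 0$ (optimality comparator $\bm{x}^{k_j}$, affine lower bound on $P_1$, Young's inequality to absorb the linear term into a fraction of $\tfrac{\widetilde{\gamma}_j}{2}\|\cdot\|^2$) is a somewhat more direct version of the paper's Step~2, which argues via a nested contradiction that the left side of \eqref{CS-fur-property} would outgrow the right; both rest on exactly the same ingredients. Where you genuinely diverge is the final step. You replay the chain of estimates from Lemma~\ref{lem-descent-H-o} along the $j$-subsequence, replacing the $\mathcal{O}(\|\cdot\|^2)$ remainders by $\tfrac{L}{2}\|\cdot\|^2$ and absorbing them once $\widetilde{\gamma}_j\geq 2L$, check $\widetilde{\gamma}_j\widetilde{\beta}_j^2\to 0$ so that the coefficient-matching step $\widetilde{\gamma}_j\widetilde{\beta}_j^2\leq\tfrac{\delta\overline{\gamma}_{k_j-1}}{8}$ holds for large $j$, and conclude that criterion \eqref{nexPGA-lscond} \emph{would} be satisfied at trial $i_{k_j}-1$, contradicting the definition of $i_{k_j}$. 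The paper instead applies the mean-value theorem (introducing an intermediate point $\bm{s}^{k_j}$ between $\bm{x}^{k_j}$ and $\widehat{\bm{x}}^{k_j}$), plugs in the \emph{failure} inequality \eqref{psi-inequality}, and after cancellations reaches the quantitative bound $\widehat{\gamma}_{k_j}\leq 8L_{2\rho,\bm{x}^*}$, contradicting $\widehat{\gamma}_{k_j}\to\infty$. Both deliver the same contradiction; your version is closer in spirit to Lemma~\ref{lem-descent-H-o}, while the paper's gives an explicit cap on the accepted proximal parameter. One small caveat on wording: you cannot invoke Lemma~\ref{lem-descent-H-o} verbatim at the end, since that lemma is stated for a fixed outer index $k$ as $i\to\infty$ with implicitly $k$-dependent constants; your earlier statement that you ``replicate the chain of estimates'' with the uniform $L$ is the step that actually carries the load, and the closing phrase ``Lemma~\ref{lem-descent-H-o} then gives'' should be read in that light.
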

\begin{proof}
First, it follows from $\tau\eta^2<1$ (by Assumption \ref{assum-para}) and the updating rules of $\beta_{k,i}$ and $\gamma_{k,i}$ that there exists a positive integer $N$ such that for each $k\in\mathbb{N}$, the following inequalities hold:
\begin{equation}\label{betageqN}
\gamma_{k,i}\beta_{k,i}^2
= \gamma_{k,0}\beta_{k,0}^2(\tau\eta^2)^i
\leq \gamma_{\max}\beta_{\max}^2\delta^2(\tau\eta^2)^i
\leq \textstyle{\frac{\delta\gamma_{\min}}{64}}
\leq \textstyle{\frac{\delta\overline{\gamma}_{k-1}}{64}},
\quad \forall\,i \geq N.
\end{equation}
Moreover, for any $\rho>0$, since $\bm{x}^*$ is an accumulation point of $\{\bm{x}^k\}$, there exist infinitely many iterates in $\mathcal{B}_{\rho}(\bm{x}^*)$. With these preparations, we will prove the desired result by contradiction and divide the proof into four steps.

\vspace{1mm}
\textit{Step 1}. Assume that for some $\rho > 0$, there exists a subsequence $\{k_j\}_{j\in\mathbb{N}}\subseteq\mathcal{J}_{\rho}(\bm{x}^*)$ such that $\{\overline{\gamma}_{k_j}\}_{j\in\mathbb{N}}$ is unbounded, i.e., $\overline{\gamma}_{k_j}\to\infty$ as $j\to\infty$. By passing to a further subsequence if necessary, we may also assume that the subsequence $\{\bm{x}^{k_j+1}\}\subseteq\mathcal{B}_{\rho}(\bm{x}^*)$ converges to some point $\bar{\bm{x}}$ (not necessarily equal to $\bm{x}^*$), and that for each $j \in \mathbb{N}$, the line search criterion \eqref{nexPGA-lscond} fails to be satisfied within the first $N$ inner iterations, where $N$ is the index ensuring \eqref{betageqN}. Let $i_{k_j}$ denote the number of inner iterations at the $k_j$-th outer iteration, and let $\widehat{\gamma}_{k_j}:=\gamma_{k_j,{i_{k_j}-1}}\,(=\overline{\gamma}_{k_j}/\tau)$, $\widehat{\beta}_{k_j}:=\beta_{k_j,i_{k_j}-1}\,(=\overline{\beta}_{k_j}/\eta)$, $\widehat{\bm{x}}^{k_j}:=\bm{x}^{k_j,i_{k_j}-1}$, and $\widehat{\bm{y}}^{k_j}:=\bm{y}^{k_j,i_{k_j}-1}(=\bm{x}^{k_j}+\widehat{\beta}_{k_j}(\bm{x}^{k_j}-\bm{x}^{k_j-1}))$. We see that $i_{k_j}\geq N+1$ (by the hypothesis) and $(\widehat{\bm{x}}^{k_j},\widehat{\gamma}_{k_j})$ fails to satisfy \eqref{nexPGA-lscond}, that is, we have
\begin{equation*}
H_{\delta}(\widehat{\bm{x}}^{k_j},\bm{x}^{k_j},\widehat{\gamma}_{k_j})
- \mathcal{R}_{k_j}
> - \textstyle{\frac{(1-\delta)\widehat{\gamma}_{k_j}}{8}}
\|\widehat{\bm{x}}^{k_j}-\bm{x}^{k_j}\|^2,
\end{equation*}
which, together with Proposition \ref{pro-basic-property}(i), yields
\begin{equation*}
H_{\delta}(\widehat{\bm{x}}^{k_j},\bm{x}^{k_j},\widehat{\gamma}_{k_j})
- H_{\delta}({\bm{x}}^{k_j},\bm{x}^{k_j-1},\overline{\gamma}_{k_j-1})
> - \textstyle{\frac{(1-\delta)\widehat{\gamma}_{k_j}}{8}}
\|\widehat{\bm{x}}^{k_j}-\bm{x}^{k_j}\|^2.
\end{equation*}
Recall from the definition of $H_{\delta}$ in \eqref{definition-H-delta}, we further have that
\begin{equation}\label{psi-inequality}
F(\widehat{\bm{x}}^{k_j}) - F(\bm{x}^{k_j})
> \textstyle{\frac{\delta\overline{\gamma}_{k_j-1}}{8}}
\|\bm{x}^{k_j}-\bm{x}^{k_j-1}\|^2
- \textstyle{\frac{\widehat{\gamma}_{k_j}}{8}}\|\widehat{\bm{x}}^{k_j}-\bm{x}^{k_j}\|^2.
\end{equation}
Moreover, it follows from \eqref{extrapolation-yki} and Proposition \ref{pro-basic-property}(iv) that
\begin{equation}\label{ykj-xkj-limit}
\|\widehat{\bm{y}}^{k_j}-\bm{x}^{k_j}\|
=\widehat{\beta}_{k_j}\|\bm{x}^{k_j}-\bm{x}^{k_j-1}\| \to 0,
\quad \text{as} ~~ j\to\infty,
\end{equation}
which, together with the boundedness of $\{\bm{x}^{k_j}\}$ (since $\{\bm{x}^{k_j+1}\}$ lies in the neighborhood $\mathcal{B}_{\rho}(\bm{x}^*)$ and $\|\bm{x}^{k_j+1}-\bm{x}^{k_j}\| \to 0$ by Proposition \ref{pro-basic-property}(iv)), implies that the sequence $\{\widehat{\bm{y}}^{k_j}\}$ is bounded. Additionally, since $P_2$ is a continuous convex function (by Assumption \ref{assum-funs1}3) and $\{\bm{x}^{k_j}\}$ is bounded, we know from \cite[Theorem 23.4]{r1970convex} that $\{\bm{\xi}^{k_j}\}$ ($\bm{\xi}^{k_j}\in\partial P_2(\bm{x}^{k_j})$) is also bounded. Combining these facts with the continuity of $\nabla f$ (by Assumption \ref{assum-funs1}1), 
we conclude that $\{\|\nabla f(\widehat{\bm{y}}^{k_j})-\bm{\xi}^{k_j}\|\}$ is bounded.

\vspace{1mm}
\textit{Step 2}. We next claim that $\widehat{\bm{x}}^{k_j}, \,\widehat{\bm{y}}^{k_j}\in \mathcal{B}_{2\rho}(\bm{x}^*)$ for all sufficiently large $j \in \mathbb{N}$. To this end, we first show that
\begin{equation}\label{diffhatxy}
\widehat{\bm{x}}^{k_j}-\widehat{\bm{y}}^{k_j}\to0
\quad \text{as} ~~ j\to\infty.
\end{equation}
For each $j\in\mathbb{N}$, since $\widehat{\bm{x}}^{k_j}$ is an optimal solution of the subproblem \eqref{nexPGA-subpro} with parameters $\widehat{\gamma}_{k_j}$, $\widehat{\beta}_{k_j}$ and $\widehat{\bm{y}}^{k_j}$, we have that
\begin{equation}\label{P1-ineq}
\begin{aligned}
&\quad \langle\nabla f(\widehat{\bm{y}}^{k_j})-\bm{\xi}^{k_j}, \widehat{\bm{x}}^{k_j}-\widehat{\bm{y}}^{k_j}\rangle
+ \textstyle{\frac{\widehat{\gamma}_{k_j}}{2}}\|\widehat{\bm{x}}^{k_j}-\widehat{\bm{y}}^{k_j}\|^2
+ P_1(\widehat{\bm{x}}^{k_j})   \\
&\leq \langle\nabla f(\widehat{\bm{y}}^{k_j})-\bm{\xi}^{k_j}, \bm{x}^{k_j}-\widehat{\bm{y}}^{k_j}\rangle
+ \textstyle{\frac{\widehat{\gamma}_{k_j}}{2}}\|\bm{x}^{k_j}-\widehat{\bm{y}}^{k_j}\|^2
+ P_1(\bm{x}^{k_j}).
\end{aligned}
\end{equation}
This further implies that 
\begin{equation}\label{CS-fur-property}
\begin{aligned}
\textstyle
&\quad \textstyle{\frac{\widehat{\gamma}_{k_j}}{2}}\|\widehat{\bm{x}}^{k_j}
-\widehat{\bm{y}}^{k_j}\|^2 \\
&\leq \|\nabla f(\widehat{\bm{y}}^{k_j})-\bm{\xi}^{k_j}\|
\|\widehat{\bm{x}}^{k_j}-\bm{x}^{k_j}\|
+ \textstyle{\frac{\widehat{\gamma}_{k_j}}{2}}\|\bm{x}^{k_j}
- \widehat{\bm{y}}^{k_j}\|^2
+ P_1(\bm{x}^{k_j}) - P_1(\widehat{\bm{x}}^{k_j})  \\
&= \|\nabla f(\widehat{\bm{y}}^{k_j})-\bm{\xi}^{k_j}\| \|\widehat{\bm{x}}^{k_j}-\bm{x}^{k_j}\|
+ \textstyle{\frac{\widehat{\gamma}_{k_j}\widehat{\beta}_{k_j}^2}{2}}\|\bm{x}^{k_j}-\bm{x}^{k_j-1}\|^2
+ F(\bm{x}^{k_j})- f(\bm{x}^{k_j}) + P_2(\bm{x}^{k_j}) - P_1(\widehat{\bm{x}}^{k_j})  \\
&\leq \|\nabla f(\widehat{\bm{y}}^{k_j})-\bm{\xi}^{k_j}\| \|\widehat{\bm{x}}^{k_j}-\bm{x}^{k_j}\| + H_{\delta}(\bm{x}^{k_j}, \bm{x}^{k_j-1}, \overline{\gamma}_{k_j-1}) - f(\bm{x}^{k_j}) + P_2(\bm{x}^{k_j}) - P_1(\widehat{\bm{x}}^{k_j})\\
&\leq \|\nabla f(\widehat{\bm{y}}^{k_j})-\bm{\xi}^{k_j}\| \|\widehat{\bm{x}}^{k_j}-\bm{x}^{k_j}\| + H_{\delta}(\bm{x}^{k_j}, \bm{x}^{k_j-1}, \overline{\gamma}_{k_j-1}) - f(\bm{x}^{k_j}) + P_2(\bm{x}^{k_j}) + \frac{\gamma}{2}\|\widehat{\bm{x}}^{k_j}\|^2 - \Delta \\
& =\|\nabla f(\widehat{\bm{y}}^{k_j})-\bm{\xi}^{k_j}\| \|\widehat{\bm{x}}^{k_j}-\bm{x}^{k_j}\| + H_{\delta}(\bm{x}^{k_j}, \bm{x}^{k_j-1}, \overline{\gamma}_{k_j-1}) \\
&\textstyle\qquad - f(\bm{x}^{k_j}) + P_2(\bm{x}^{k_j}) + \frac{\gamma}{2}\|\widehat{\bm{x}}^{k_j}-\widehat{\bm{y}}^{k_j}\|^2 + \frac{\gamma}{2}\|\widehat{\bm{y}}^{k_j}\|^2 + \langle\widehat{\bm{x}}^{k_j}-\widehat{\bm{y}}^{k_j},\, \gamma\widehat{\bm{y}}^{k_j}\rangle - \Delta\\
&\leq \big(\|\nabla f(\widehat{\bm{y}}^{k_j})-\bm{\xi}^{k_j}\| + \gamma\|\widehat{\bm{y}}^{k_j}\|\big) \|\widehat{\bm{x}}^{k_j}-\widehat{\bm{y}}^{k_j}\|
+ \|\nabla f(\widehat{\bm{y}}^{k_j})-\bm{\xi}^{k_j}\| \|\widehat{\bm{y}}^{k_j}-\bm{x}^{k_j}\| + F(\bm{x}^0) \\
&\textstyle\qquad - f(\bm{x}^{k_j}) + P_2(\bm{x}^{k_j}) + \frac{\gamma}{2}\|\widehat{\bm{x}}^{k_j}-\widehat{\bm{y}}^{k_j}\|^2 + \frac{\gamma}{2}\|\widehat{\bm{y}}^{k_j}\|^2 - \Delta,
\end{aligned}
\end{equation}
where the equality follows from $\widehat{\bm{y}}^{k_j}=\bm{x}^{k_j}+\widehat{\beta}_{k_j}(\bm{x}^{k_j}-\bm{x}^{k_j-1})$ and $P_1=F-f+P_2$, the second inequality follows from the definition of $H_{\delta}$ in \eqref{definition-H-delta}, along with the fact that \eqref{betageqN} holds for $\widehat{\gamma}_{k_j}=\gamma_{k_j,{i_{k_j}-1}}$ and $\widehat{\beta}_{k_j}=\beta_{k_j,i_{k_j}-1}$ with $i_{k_j}\geq N+1$, the third inequality follows from Assumption \ref{assum-funs1}2 ($P_1$ is prox-bounded with $\Delta\coloneqq\inf\left\{P_1 + \frac{\gamma}{2}\|\cdot\|^2\right\} > -\infty$ for some $\gamma>0$), and the last inequality follows from the triangle inequality and $H_{\delta}(\bm{x}^{k_j},\bm{x}^{k_j-1},\overline{\gamma}_{k_j-1})\leq\mathcal{R}_{k_j}\leq\mathcal{R}_0=F(\bm{x}^0)$ by Proposition \ref{pro-basic-property}(i)\&(ii).

Using \eqref{CS-fur-property}, we can prove \eqref{diffhatxy} by contradiction as follows. Assume that \eqref{diffhatxy} does not hold. Then, there must exist a subsequence $\{(\widehat{\bm{x}}^{k_{j_t}},\widehat{\bm{y}}^{k_{j_t}})\}$ such that for some $\varepsilon > 0$ we have $\|\widehat{\bm{x}}^{k_{j_t}}-\widehat{\bm{y}}^{k_{j_t}}\|\ge \varepsilon$ for all $t$. Now, from the boundedness of $\{\bm{x}^{k_j}\}$ together with the continuity of $f$ (by Assumption \ref{assum-funs1}1) and $P_2$ (by Assumption \ref{assum-funs1}3), the boundedness of $\{\widehat{\bm{y}}^{k_j}\}$ and $\left\{\|\nabla f(\widehat{\bm{y}}^{k_j})-\bm{\xi}^{k_j}\|\right\}$ (as shown in \textit{Step 1}), \eqref{ykj-xkj-limit} and $\widehat{\gamma}_{k_j}=\overline{\gamma}_{k_j}/\tau\to\infty$, one can see that the left-hand side of \eqref{CS-fur-property} would grow faster than the right-hand side of \eqref{CS-fur-property} along the subsequence $\{(\widehat{\bm{x}}^{k_{j_t}},\widehat{\bm{y}}^{k_{j_t}})\}$, thereby leading to a contradiction as $t\to\infty$. Thus, using \eqref{ykj-xkj-limit}, \eqref{diffhatxy} and $\bm{x}^{k_j}\to\bar{\bm{x}}$ (from $\|\bm{x}^{k_j+1}-\bm{x}^{k_j}\| \to 0$ and the assumption that $\{\bm{x}^{k_j+1}\}\subseteq\mathcal{B}_{\rho}(\bm{x}^*)$ converges to $\bar{\bm{x}}$), we see that $\widehat{\bm{x}}^{k_j}\to\bar{\bm{x}}$ and $\widehat{\bm{y}}^{k_j} \to \bar{\bm{x}}$ as $j\to\infty$. These, together with $\bar{\bm{x}}\in\mathcal{B}_{\rho}(\bm{x}^*)$, imply that $\bm{x}^{k_j},\,\widehat{\bm{x}}^{k_j},\,\widehat{\bm{y}}^{k_j}\in \mathcal{B}_{2\rho}(\bm{x}^*)$ for all sufficiently large $j\in\mathbb{N}$.

\textit{Step 3}. In the following, we shall derive an upper bound for $\frac{\widehat{\gamma}_{k_j}}{2}\|\widehat{\bm{x}}^{k_j}-\bm{x}^{k_j}\|^2$. First, we see from Assumption \ref{assum-funs1}1 that $\nabla f$ is (globally) Lipschitz continuous on the compact set $\mathcal{B}_{2\rho}(\bm{x}^*)$ and we denote the corresponding Lipschitz constant by $L_{2\rho,\bm{x}^*}$. Next, by the mean-value theorem, the convexity of $P_2$, and the fact that $\bm{\xi}^{k_j}\in\partial P_2(\bm{x}^{k_j})$, there exists a point $\bm{s}^{k_j}$ on the line segment between $\bm{x}^{k_j}$ and $\widehat{\bm{x}}^{k_j}$ such that
\begin{equation*}
\begin{aligned}
F(\widehat{\bm{x}}^{k_j})-F(\bm{x}^{k_j})
&= f(\widehat{\bm{x}}^{k_j}) + P_1(\widehat{\bm{x}}^{k_j}) - P_2(\widehat{\bm{x}}^{k_j})
- f(\bm{x}^{k_j}) - P_1(\bm{x}^{k_j}) + P_2(\bm{x}^{k_j})   \\
&= \langle\nabla f(\bm{s}^{k_j}),\,\widehat{\bm{x}}^{k_j}-\bm{x}^{k_j}\rangle
+ P_1(\widehat{\bm{x}}^{k_j}) - P_1(\bm{x}^{k_j})
- P_2(\widehat{\bm{x}}^{k_j}) + P_2(\bm{x}^{k_j})  \\
&\leq \langle\nabla f(\bm{s}^{k_j})-\bm{\xi}^{k_j},\, \widehat{\bm{x}}^{k_j}-\bm{x}^{k_j}\rangle
+ P_1(\widehat{\bm{x}}^{k_j}) - P_1(\bm{x}^{k_j}).
\end{aligned}
\end{equation*}
Combining this with \eqref{P1-ineq} further yields that
\begin{equation*}
\hspace{-4mm}
\begin{aligned}
&\quad F(\widehat{\bm{x}}^{k_j})-F(\bm{x}^{k_j})  \\
&\leq\langle\nabla f(\bm{s}^{k_j})-\nabla f(\widehat{\bm{y}}^{k_j}), \,\widehat{\bm{x}}^{k_j}-\bm{x}^{k_j}\rangle
+ {\textstyle\frac{\widehat{\gamma}_{k_j}}{2}}\|\bm{x}^{k_j}-\widehat{\bm{y}}^{k_j}\|^2
- {\textstyle\frac{\widehat{\gamma}_{k_j}}{2}}\|\widehat{\bm{x}}^{k_j}-\widehat{\bm{y}}^{k_j}\|^2 \\
&= \langle\nabla f(\bm{s}^{k_j}) - \nabla f(\widehat{\bm{y}}^{k_j}), \,\widehat{\bm{x}}^{k_j}-\bm{x}^{k_j}\rangle
- {\textstyle\frac{\widehat{\gamma}_{k_j}}{2}}\|\bm{x}^{k_j}-\widehat{\bm{x}}^{k_j}\|^2
- \widehat{\gamma}_{k_j}\langle\widehat{\bm{x}}^{k_j}-\bm{x}^{k_j},\,\bm{x}^{k_j}-\widehat{\bm{y}}^{k_j}\rangle \\
&\leq\|\nabla f(\bm{s}^{k_j})-\nabla f(\widehat{\bm{y}}^{k_j})\|
\|\widehat{\bm{x}}^{k_j}-\bm{x}^{k_j}\|
- {\textstyle\frac{\widehat{\gamma}_{k_j}}{2}}\|\bm{x}^{k_j}-\widehat{\bm{x}}^{k_j}\|^2
+ \widehat{\gamma}_{k_j}\|\widehat{\bm{x}}^{k_j}-\bm{x}^{k_j}\|
\|\bm{x}^{k_j}-\widehat{\bm{y}}^{k_j}\|.
\end{aligned}
\end{equation*}
This implies that, for all sufficiently large $j\in\mathbb{N}$,
\begin{equation*}
\hspace{-4mm}
\begin{aligned}
&\quad {\textstyle\frac{\widehat{\gamma}_{k_j}}{2}}\|\widehat{\bm{x}}^{k_j}-\bm{x}^{k_j}\|^2 \\
&\leq \|\nabla f(\bm{s}^{k_j})-\nabla f(\widehat{\bm{y}}^{k_j})\|
\|\widehat{\bm{x}}^{k_j}-\bm{x}^{k_j}\|
+ \widehat{\gamma}_{k_j}\|\widehat{\bm{x}}^{k_j}-\bm{x}^{k_j}\|
\|\bm{x}^{k_j}-\widehat{\bm{y}}^{k_j}\| 
+ F(\bm{x}^{k_j}) - F(\widehat{\bm{x}}^{k_j})  \\
&\leq\|\nabla f(\bm{s}^{k_j})-\nabla f(\bm{x}^{k_j})\|
\|\widehat{\bm{x}}^{k_j}-\bm{x}^{k_j}\|
+ \|\nabla f(\bm{x}^{k_j})-\nabla f(\widehat{\bm{y}}^{k_j})\|
\|\widehat{\bm{x}}^{k_j}-\bm{x}^{k_j}\|  \\
&\qquad
+\widehat{\gamma}_{k_j}\|\widehat{\bm{x}}^{k_j}-\bm{x}^{k_j}\|\|\bm{x}^{k_j}-\widehat{\bm{y}}^{k_j}\|
-{\textstyle\frac{\delta\overline{\gamma}_{k_j-1}}{8}}\|\bm{x}^{k_j}-\bm{x}^{k_j-1}\|^2
+{\textstyle\frac{\widehat{\gamma}_{k_j}}{8}}\|\widehat{\bm{x}}^{k_j}-\bm{x}^{k_j}\|^2 \\
&\leq L_{2\rho,\bm{x}^*}\|\bm{s}^{k_j}-\bm{x}^{k_j}\|\|\widehat{\bm{x}}^{k_j}-\bm{x}^{k_j}\|
+ (L_{2\rho,\bm{x}^*}+\widehat{\gamma}_{k_j})\|\bm{x}^{k_j}-\widehat{\bm{y}}^{k_j}\|
\|\widehat{\bm{x}}^{k_j}-\bm{x}^{k_j}\|  \\
&\qquad
-{\textstyle\frac{\delta\overline{\gamma}_{k_j-1}}{8}}\|\bm{x}^{k_j}-\bm{x}^{k_j-1}\|^2
+{\textstyle\frac{\widehat{\gamma}_{k_j}}{8}}\|\widehat{\bm{x}}^{k_j}-\bm{x}^{k_j}\|^2 \\
&\leq L_{2\rho,\bm{x}^*}\|\bm{s}^{k_j}-\bm{x}^{k_j}\|\|\widehat{\bm{x}}^{k_j}-\bm{x}^{k_j}\|
+ 2\widehat{\gamma}_{k_j}\|\bm{x}^{k_j}-\widehat{\bm{y}}^{k_j}\|\|\widehat{\bm{x}}^{k_j}-\bm{x}^{k_j}\| \\
&\qquad
-{\textstyle\frac{\delta\overline{\gamma}_{k_j-1}}{8}}\|\bm{x}^{k_j}-\bm{x}^{k_j-1}\|^2
+{\textstyle\frac{\widehat{\gamma}_{k_j}}{8}}\|\widehat{\bm{x}}^{k_j}-\bm{x}^{k_j}\|^2 \\
&\leq L_{2\rho,\bm{x}^*}\|\bm{s}^{k_j}-\bm{x}^{k_j}\|\|\widehat{\bm{x}}^{k_j}-\bm{x}^{k_j}\|
+{\textstyle\frac{3\widehat{\gamma}_{k_j}}{8}}\|\widehat{\bm{x}}^{k_j}-\bm{x}^{k_j}\|^2
+4\widehat{\gamma}_{k_j}\|\bm{x}^{k_j}-\widehat{\bm{y}}^{k_j}\|^2 -{\textstyle\frac{\delta\overline{\gamma}_{k_j-1}}{8}}\|\bm{x}^{k_j}\!-\!\bm{x}^{k_j-1}\|^2 \\
&= L_{2\rho,\bm{x}^*}\|\bm{s}^{k_j}-\bm{x}^{k_j}\|
\|\widehat{\bm{x}}^{k_j}-\bm{x}^{k_j}\|
+{\textstyle\frac{3\widehat{\gamma}_{k_j}}{8}}\|\widehat{\bm{x}}^{k_j}-\bm{x}^{k_j}\|^2
+\left(\textstyle{4\widehat{\gamma}_{k_j}\widehat{\beta}_{k_j}^2}-{\textstyle\frac{\delta\overline{\gamma}_{k_j-1}}{8}}\right)\|\bm{x}^{k_j}-\bm{x}^{k_j-1}\|^2 \\
&\leq L_{2\rho,\bm{x}^*}\|\bm{s}^{k_j}-\bm{x}^{k_j}\|\|\widehat{\bm{x}}^{k_j}-\bm{x}^{k_j}\|
+ {\textstyle\frac{3\widehat{\gamma}_{k_j}}{8}}\|\widehat{\bm{x}}^{k_j}-\bm{x}^{k_j}\|^2,
\end{aligned}
\end{equation*}
where the second inequality follows from the triangle inequality and \eqref{psi-inequality}, the third inequality follows from the Lipschitz continuity of $\nabla f$ on the compact set $\mathcal{B}_{2\rho}(\bm{x}^*)$, the fourth inequality follows from $\widehat{\gamma}_{k_j}\to\infty$ as $j\to\infty$, the second last inequality follows from the relation $2ab \leq a^2+b^2$ with $a=2\sqrt{\widehat{\gamma}_{k_j}}\|\bm{x}^{k_j}-\widehat{\bm{y}}^{k_j}\|$ and $b=\frac{1}{2}\sqrt{\widehat{\gamma}_{k_j}}\|\widehat{\bm{x}}^{k_j}-\bm{x}^{k_j}\|$, the equality follows from $\widehat{\bm{y}}^{k_j}=\bm{x}^{k_j}+\widehat{\beta}_{k_j}(\bm{x}^{k_j}-\bm{x}^{k_j-1})$, the last inequality follows from the fact that \eqref{betageqN} holds for $\widehat{\gamma}_{k_j}=\gamma_{k_j,{i_{k_j}-1}}$ and $\widehat{\beta}_{k_j}=\beta_{k_j,i_{k_j}-1}$ with $i_{k_j}\geq N+1$.

\vspace{1mm}
\textit{Step 4}. Finally, using the above relation and the fact that $\bm{s}^{k_j}$ is on the line segment between ${\bm{x}}^{k_j}$ and $\widehat{\bm{x}}^{k_j}$,
we obtain that
\begin{equation*}
\textstyle{\frac{\widehat{\gamma}_{k_j}}{8}}\|\widehat{\bm{x}}^{k_j}-\bm{x}^{k_j}\|^2
\leq L_{2\rho,\bm{x}^*}\|\bm{s}^{k_j}-\bm{x}^{k_j}\|\|\widehat{\bm{x}}^{k_j}-\bm{x}^{k_j}\|
\leq L_{2\rho,\bm{x}^*}\|\widehat{\bm{x}}^{k_j}-\bm{x}^{k_j}\|^2 
\end{equation*}
holds for all sufficiently large $j \in \mathbb{N}$. Since $\widehat{\bm{x}}^{k_j}-\bm{x}^{k_j}\neq0$, which is guaranteed by \eqref{psi-inequality}, the above relation then leads to a contradiction if $\widehat{\gamma}_{k_j}\to\infty$ as $j\to\infty$. This implies that $\{\overline{\gamma}_k\}_{k\in\mathcal{J}_{\rho}(\bm{x}^*)}$ is bounded and completes the proof.
\end{proof}

We are now ready to establish the subsequential convergence for nexPGA.

\begin{theorem}\label{nexPGA-theorem-subsequence}
Suppose that Assumptions \ref{assum-para} and \ref{assum-funs1} hold. Let $\{\bm{x}^k\}$ be the sequence generated by the nexPGA in Algorithm \ref{algo-nexPGA}, and let $\zeta$ be given in Proposition \ref{pro-basic-property}(ii). Moreover, if the sequence $\{\bm{x}^k\}$ has an accumulation point $\bm{x}^*$. Then, $\bm{x}^*$ is a stationary point of problem \eqref{ge-DC-problem}, and $F(\bm{x}^*)={\zeta}$.
\end{theorem}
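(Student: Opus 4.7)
The plan is to extract a convergent subsequence $\{\bm{x}^{k_j}\}\to\bm{x}^*$, show that the neighboring iterates $\bm{x}^{k_j\pm 1}$ and extrapolated points $\bm{y}^{k_j}$ also converge to $\bm{x}^*$, and then invoke the first-order optimality of the subproblem \eqref{nexPGA-subpro} together with Proposition~\ref{pro-gammak-bounded} to pass to the limit. The extrapolation step itself is benign in the limit: since $\overline{\beta}_{k_j}\le \delta\beta_{\max}$ and $\|\bm{x}^{k_j}-\bm{x}^{k_j-1}\|\to 0$ by Proposition~\ref{pro-basic-property}(v), we have $\bm{y}^{k_j}\to \bm{x}^*$ and also $\bm{x}^{k_j+1}-\bm{y}^{k_j}\to 0$.

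First I would establish continuity of $P_1$ along the subsequence, which is the most delicate ingredient. Fix any $\rho>0$ with $\bm{x}^{k_j+1}\in\mathcal{B}_\rho(\bm{x}^*)$ eventually; Proposition~\ref{pro-gammak-bounded} then guarantees that $\overline{\gamma}_{k_j}$ is bounded, so we may further extract a subsequence with $\overline{\gamma}_{k_j}\to\overline{\gamma}^*\in[\gamma_{\min},\overline{\gamma}_{\rho,\bm{x}^*}]$. Testing the optimality of $\bm{x}^{k_j+1}$ in \eqref{nexPGA-subpro} against the point $\bm{x}^*$ yields
\begin{equation*}
\langle \nabla f(\bm{y}^{k_j})-\bm{\xi}^{k_j},\bm{x}^{k_j+1}-\bm{y}^{k_j}\rangle+\tfrac{\overline{\gamma}_{k_j}}{2}\|\bm{x}^{k_j+1}-\bm{y}^{k_j}\|^2+P_1(\bm{x}^{k_j+1})\leq\langle\nabla f(\bm{y}^{k_j})-\bm{\xi}^{k_j},\bm{x}^*-\bm{y}^{k_j}\rangle+\tfrac{\overline{\gamma}_{k_j}}{2}\|\bm{x}^*-\bm{y}^{k_j}\|^2+P_1(\bm{x}^*).
\end{equation*}
Using the boundedness of $\{\bm{\xi}^{k_j}\}$ (from convexity of $P_2$ and boundedness of $\{\bm{x}^{k_j}\}$, via \cite[Theorem~23.4]{r1970convex}), continuity of $\nabla f$, $\bm{y}^{k_j}\to\bm{x}^*$, and $\bm{x}^{k_j+1}\to\bm{x}^*$, the right-hand side tends to $P_1(\bm{x}^*)$ while the first two terms on the left vanish, giving $\limsup_j P_1(\bm{x}^{k_j+1})\leq P_1(\bm{x}^*)$. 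Combined with the lower semicontinuity of $P_1$ (Assumption~B2), this forces $P_1(\bm{x}^{k_j+1})\to P_1(\bm{x}^*)$, and hence $F(\bm{x}^{k_j+1})\to F(\bm{x}^*)$ by continuity of $f$ and $P_2$.

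To identify $F(\bm{x}^*)$ with $\zeta$, I would use Proposition~\ref{pro-basic-property}(iii), which gives $H_\delta(\bm{x}^{k_j+1},\bm{x}^{k_j},\overline{\gamma}_{k_j})\to\zeta$. Since
\begin{equation*}
H_\delta(\bm{x}^{k_j+1},\bm{x}^{k_j},\overline{\gamma}_{k_j})-F(\bm{x}^{k_j+1})=\tfrac{\delta\overline{\gamma}_{k_j}}{8}\|\bm{x}^{k_j+1}-\bm{x}^{k_j}\|^2\to 0
\end{equation*}
by Proposition~\ref{pro-basic-property}(v) and the boundedness of $\overline{\gamma}_{k_j}$, the equality $F(\bm{x}^*)=\zeta$ follows.

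Finally, for stationarity, the first-order optimality condition for the subproblem \eqref{nexPGA-subpro} reads
\begin{equation*}
-\nabla f(\bm{y}^{k_j})+\bm{\xi}^{k_j}-\overline{\gamma}_{k_j}(\bm{x}^{k_j+1}-\bm{y}^{k_j})\in\partial P_1(\bm{x}^{k_j+1}).
\end{equation*}
Passing to a further subsequence so that $\bm{\xi}^{k_j}\to\bm{\xi}^*$, the closedness of the graph of $\partial P_2$ (a standard property for continuous convex functions) yields $\bm{\xi}^*\in\partial P_2(\bm{x}^*)$. The bounded term $\overline{\gamma}_{k_j}(\bm{x}^{k_j+1}-\bm{y}^{k_j})$ tends to $0$, so the left-hand side converges to $-\nabla f(\bm{x}^*)+\bm{\xi}^*$. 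Combined with $\bm{x}^{k_j+1}\to\bm{x}^*$, $P_1(\bm{x}^{k_j+1})\to P_1(\bm{x}^*)$, and the subdifferential outer semicontinuity relation~\eqref{robust}, we conclude $-\nabla f(\bm{x}^*)+\bm{\xi}^*\in\partial P_1(\bm{x}^*)$, i.e., $0\in\nabla f(\bm{x}^*)+\partial P_1(\bm{x}^*)-\partial P_2(\bm{x}^*)$, as required. The main obstacle in this plan is the $P_1$-continuity step: without the ``$\limsup\le P_1(\bm{x}^*)$'' argument via the subproblem, relation~\eqref{robust} cannot be invoked and the argument collapses; Proposition~\ref{pro-gammak-bounded} is what makes that argument available by keeping $\overline{\gamma}_{k_j}$ bounded.
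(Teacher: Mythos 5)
Your proposal is correct and follows essentially the same approach as the paper: both use Proposition~\ref{pro-gammak-bounded} to bound $\overline{\gamma}_{k_j}$ along the convergent subsequence, establish $P_1(\bm{x}^{k_j+1})\to P_1(\bm{x}^*)$ by testing the subproblem optimality against $\bm{x}^*$ and combining the resulting $\limsup$ bound with lower semicontinuity, invoke the subdifferential outer semicontinuity relation~\eqref{robust} for stationarity, and identify $F(\bm{x}^*)=\zeta$ via Proposition~\ref{pro-basic-property}(iii)\&(v). The only cosmetic differences are the order of the last two steps and the (unused) further extraction to make $\overline{\gamma}_{k_j}$ converge.
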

\begin{proof}
Let $\{\bm{x}^{k_j}\}_{j\in\mathbb{N}}$ be a subsequence converging to $\bm{x}^*$. It follows from Proposition \ref{pro-basic-property}(iv) and the updating rules in \textbf{Step 2} that $\{\bm{x}^{k_j+1}\}_{j\in\mathbb{N}}$ and $\{\overline{\bm{y}}^{k_j}\}_{j\in\mathbb{N}}$ converge to $\bm{x}^*$. Thus, for any $\rho>0$, we have $\bm{x}^{k_j+1}\in\mathcal{B}_{\rho}(\bm{x}^*)(:=\left\{\bm{x}\in\mathbb{R}^n:\|\bm{x}-\bm{x}^*\|\leq\rho\right\})$ for all sufficiently large $j\in\mathbb{N}$. Then, by Proposition \ref{pro-gammak-bounded}, there exists a positive constant $\overline{\gamma}_{\rho,\bm{x}^*}>0$ such that $\overline{\gamma}_{k_j}\leq\overline{\gamma}_{\rho,\bm{x}^*}$ for all sufficiently large $j\in\mathbb{N}$. Moreover, since $P_2$ is a continuous convex function (by Assumption \ref{assum-funs1}3) and $\{\bm{x}^{k_j}\}$ is bounded, we know from \cite[Theorem 23.4]{r1970convex} that $\{\bm{\xi}^{k_j}\}$ ($\bm{\xi}^{k_j}\in\partial P_2(\bm{x}^{k_j})$) is also bounded. Thus, by passing to a further subsequence if necessary, we may assume without loss of generality that $\bm{\xi}^*:=\lim_{j\to\infty}\bm{\xi}^{k_j}$ exists and $\bm{\xi}^*\in\partial P_2(\bm{x}^*)$ due to the closedness of $\partial P_2$.

We next prove that $P_1(\bm{x}^{k_j+1}) \to P_1(\bm{x}^*)$. Since $\bm{x}^{k_j+1}$ is an optimal solution of the subproblem \eqref{nexPGA-subpro} with parameters $\overline{\gamma}_{k_j}$ and $\overline{\bm{y}}^{k_j}=\bm{x}^{k_j}+\overline{\beta}_{k_j}(\bm{x}^{k_j}-\bm{x}^{k_j-1})$, we have
\begin{equation*}
\begin{aligned}
&\quad \langle\nabla f(\overline{\bm{y}}^{k_j})-\bm{\xi}^{k_j}, \,\bm{x}^{k_j+1}-\overline{\bm{y}}^{k_j}\rangle
+ \textstyle{\frac{\overline{\gamma}_{k_j}}{2}}\|\bm{x}^{k_j+1}-\overline{\bm{y}}^{k_j}\|^2
+ P_1(\bm{x}^{k_j+1})  \\
&\leq \langle\nabla f(\overline{\bm{y}}^{k_j})-\bm{\xi}^{k_j}, \,\bm{x}^*-\overline{\bm{y}}^{k_j}\rangle
+ \textstyle{\frac{\overline{\gamma}_{k_j}}{2}}\|\bm{x}^*-\overline{\bm{y}}^{k_j}\|^2
+ P_1(\bm{x}^*),
\quad \forall\,j\in\mathbb{N},
\end{aligned}
\end{equation*}
which implies that
\begin{equation*}
\begin{aligned}
 P_1(\bm{x}^{k_j+1})
&\leq P_1(\bm{x}^*) + \langle\nabla f(\overline{\bm{y}}^{k_j})-\bm{\xi}^{k_j}, \,\bm{x}^*-\bm{x}^{k_j+1}\rangle + \textstyle\frac{\overline{\gamma}_{k_j}}{2}\|\bm{x}^*-\overline{\bm{y}}^{k_j}\|^2
- \frac{\overline{\gamma}_{k_j}}{2}\|\bm{x}^{k_j+1}-\overline{\bm{y}}^{k_j}\|^2.
\end{aligned}
\end{equation*}
Passing to the limit in the above relation, and invoking $\bm{x}^{k_j+1}-\bm{x}^{k_j}\to0$ (by Proposition \ref{pro-basic-property}(iv)), $\bm{x}^{k_j}\to\bm{x}^*$, $\overline{\bm{y}}^{k_j}\to\bm{x}^*$, $\bm{\xi}^{k_j}\to \bm{\xi}^*$, the continuity of $\nabla f$ (by Assumption \ref{assum-funs1}1) and the fact that $\overline{\gamma}_{k_j}\leq\overline{\gamma}_{\rho,\bm{x}^*}$ for all sufficiently large $j\in\mathbb{N}$, we obtain that $\limsup\limits_{j\to\infty}\,P_1(\bm{x}^{k_j+1})\leq P_1(\bm{x}^*)$. On the other hand, since $\bm{x}^{k_j+1} \rightarrow \bm{x}^*$ and $P_1$ is lower semicontinuous (by Assumption \ref{assum-funs1}2), we have that $P_1(\bm{x}^*) \leq \liminf\limits_{j\to\infty}\,P_1(\bm{x}^{k_j+1})$. Therefore, we can conclude that $P_1(\bm{x}^{k_j+1}) \to P_1(\bm{x}^*)$.

Finally, it follows from the first-order optimality condition for \eqref{nexPGA-subpro} that
\begin{equation*}
\textstyle 0 \in \nabla f(\overline{\bm{y}}^{k_j}) - \bm{\xi}^{k_j}
+ \overline{\gamma}_{k_j}(\bm{x}^{k_j+1}-\overline{\bm{y}}^{k_j})
+ \partial P_1(\bm{x}^{k_j+1}),
\quad \forall\,j\in\mathbb{N}.
\end{equation*}
Passing to the limit in the above relation, and invoking \eqref{robust}, $P_1(\bm{x}^{k_j+1}) \to P_1(\bm{x}^*)$, $\bm{x}^{k_j+1}\to\bm{x}^*$, $\overline{\bm{y}}^{k_j}\to\bm{x}^*$, $\bm{\xi}^{k_j}\to \bm{\xi}^*$, the continuity of $\nabla f$ (by Assumption \ref{assum-funs1}1) and the fact that $\overline{\gamma}_{k_j}\leq\overline{\gamma}_{\rho,\bm{x}^*}$ for all sufficiently large $j\in\mathbb{N}$, we obtain that
\begin{equation*}
0 \in \nabla f(\bm{x}^*)+\partial P_1(\bm{x}^*)-\partial P_2(\bm{x}^*),
\end{equation*}
which implies that $\bm{x}^*$ is a stationary point of problem \eqref{ge-DC-problem}.

In addition, it follows from Proposition \ref{pro-basic-property}(iii)\&(iv), the definition of $H_{\delta}$ in \eqref{definition-H-delta}, and the bound $\overline{\gamma}_{k_j}\leq\overline{\gamma}_{\rho,\bm{x}^*}$ for all sufficiently large $j\in\mathbb{N}$ that
\begin{equation*}
\lim\limits_{j\rightarrow \infty} F(\bm{x}^{k_j+1})
= \lim_{j\to\infty} H_\delta(\bm{x}^{k_j+1}, \bm{x}^{k_j}, \overline{\gamma}_{k_j})={\zeta}. 
\end{equation*}

Using this relation and the continuity of $f$ and $P_2$, together with $\bm{x}^{k_j+1} \rightarrow \bm{x}^*$ and $P_1(\bm{x}^{k_j+1}) \rightarrow P_1(\bm{x}^*)$ as shown above, we can conclude that
\begin{equation*}
F(\bm{x}^*)
=f(\bm{x}^*)+P_1(\bm{x}^*)-P_2(\bm{x}^*)
=\lim\limits_{j\rightarrow\infty}\left\{f(\bm{x}^{k_j+1})+P_1(\bm{x}^{k_j+1})-P_2(\bm{x}^{k_j+1})\right\}
=\zeta. 
\end{equation*}
This completes the proof.
\end{proof}

Before closing this section, we would like to emphasize that Theorem \ref{nexPGA-theorem-subsequence} is established \textit{without} requiring either the global Lipschitz continuity of $\nabla f$, a boundedness assumption on the generated sequence $\{\bm{x}^k\}$, or a level-boundedness assumption on the objective (or potential) function (which, in turn, ensures bounded iterates). In contrast, one or more of these assumptions are typically imposed in prior works when analyzing the convergence of PG-type algorithms; see, e.g., \cite{gzlhy2013general,ll2015accelerated,wcp2018proximal,wnf2009sparse,yang2024proximal}. In the next section, we go further by establishing global sequential convergence, along with convergence rates for both the objective values and the iterates, all under the same relaxed assumptions. These results enhance the flexibility of nexPGA and broaden its applicability to a wider range of practical problems.

%%%%%%%%%%%%%%%%%%%%%%%%%%%%%%%%%%%%%%%%%%%%%%%%%%%%%%%%%%%%%%%%%
\section{The KL-based convergence analysis}\label{sec-kl-analysis}

In this section, we further investigate the convergence properties of nexPGA under the Kurdyka-{\L}ojasiewicz (KL) property and its associated exponent. The KL property has become a fundamental tool for analyzing the global sequential convergence and convergence rates of nonconvex first-order methods. However, most existing convergence analyses rely on a sufficiently \textit{monotone} decrease in the sequence of (potential) objective function values, which is a crucial condition for effectively applying the KL inequality; see, e.g., \cite{bbt2017descent,jkm2023convergence,tft2022new}. This reliance poses a significant challenge when analyzing \textit{nonmonotone} methods, where such monotonicity does not hold.

Only recently the KL-based analysis has been successfully extended to the nonmonotone setting. Specifically, Yang \cite{yang2024proximal} established convergence rate results for the objective values in a GLL-type nonmonotone proximal gradient (NPG) method, but the sequential convergence of the iterates was not addressed. Subsequently, Qian and Pan \cite{qp2023convergence} were the first to prove both global sequential convergence and corresponding rate results for a class of GLL-type methods. These results were further improved by Qian et al. \cite{qtpq2025gll}, who derived similar guarantees under weaker and more verifiable conditions. More recently, Qian et al. \cite{qtpq2024convergence} developed a novel KL-based analysis framework for ZH-type nonmonotone descent methods, which notably avoids the restrictive gap condition imposed in their earlier GLL-type analyses \cite{qp2023convergence,qtpq2025gll}. Their findings indicate that the ZH-type line search strategy would be theoretically more favorable. Motivated by this development, Kanzow and Lehmann \cite{kl2025convergence} further established the global sequential convergence and the linear convergence rate for a ZH-type NPG method, \textit{without} requiring the global Lipschitz continuity of $\nabla f$ or any boundedness assumption on the generated sequence.

These advancements motivate us to establish strong convergence properties for the proposed nexPGA (a ZH-type nonmonotone algorithm with extrapolation), thereby broadening its theoretical foundation. Here, we would also like to point out that, while our analysis is inspired by \cite{kl2025convergence,qtpq2024convergence}, the presence of extrapolation indeed introduces new technical challenges. In particular, the reference value $\mathcal{R}_k$ used in our line search criterion \eqref{nexPGA-lscond} is constructed based on the potential function $H_{\delta}$ in \eqref{definition-H-delta}, rather than on the original objective function $F$ as considered in \cite{kl2025convergence,qtpq2024convergence}. Consequently, the convergence analyses developed in those works cannot be directly applied to nexPGA. More importantly, our goal is to establish these convergence properties \textit{without} assuming the global Lipschitz continuity of $\nabla f$ and \textit{without} imposing any explicit or implicit boundedness assumptions on the generated sequence $\{\bm{x}^k\}$. Therefore, a more delicate and refined analysis is required. Our analysis and results would contribute to the growing body of research on the ZH-type nonmonotone algorithm.

We begin our analysis by introducing the following additional assumption, which is a standard technical condition commonly used in the global convergence analysis of existing proximal DC-type algorithms; see, e.g., \cite{tft2022new,wcp2018proximal}.

\begin{assumption}\label{assumC}
Assume that $P_2$ is continuously differentiable on an open set $\mathcal{N}_{\mathcal{S}}$ that contains $\mathcal{S}$, where $\mathcal{S}$ is the set of all stationary points of problem \eqref{ge-DC-problem}. Moreover, $\nabla P_2$ is locally Lipschitz continuous on $\mathcal{N}_{\mathcal{S}}$. 
\end{assumption}

\begin{lemma}\label{nexPGA-lem-dist-H-delta}
Suppose that Assumptions \ref{assum-para}, \ref{assum-funs1} and \ref{assumC} hold. Let $\{\bm{x}^k\}$ be the sequence generated by the nexPGA in Algorithm \ref{algo-nexPGA}. Moreover, if the sequence $\{\bm{x}^k\}$ has an accumulation point $\bm{x}^*$, then there exist $\tilde{c}>0$, $J\in\mathbb N$, and $\alpha>0$ such that 
\begin{equation}\label{dist-H-delta-k}
\operatorname{dist}\left(\bm{0}, \,\partial H_{\delta}(\bm{x}^{k}, \bm{x}^{k-1}, \overline{\gamma}_{k-1})\right)
\leq \tilde{c}\left(\|\bm{x}^{k}-\bm{x}^{k-1}\|+\|\bm{x}^{k-1}-\bm{x}^{k-2}\|\right)
\end{equation}
for all $k\in\{j\in\mathbb{N}:\bm{x}^{j}\in\mathcal{B}_{\alpha}(\bm{x}^*),\,j\geq J\}$ with $\mathcal{B}_{\alpha}(\bm{x}^*):=\left\{\bm{x}\in\mathbb{R}^n:\|\bm{x}-\bm{x}^*\|\leq\alpha\right\}$.
\end{lemma}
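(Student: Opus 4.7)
The plan is to exhibit an explicit subgradient of $H_\delta$ at $(\bm{x}^k, \bm{x}^{k-1}, \overline{\gamma}_{k-1})$ and estimate its norm. Regarding $H_\delta(\bm{u},\bm{v},\gamma) = F(\bm{u}) + \tfrac{\delta\gamma}{8}\|\bm{u}-\bm{v}\|^2$ as a function on $\mathbb{R}^n\times\mathbb{R}^n\times\mathbb{R}$, the nonsmooth dependence sits only in the $\bm{u}$-slot, so by the sum rule combined with separability
\[
\partial H_\delta(\bm{u},\bm{v},\gamma) = \bigl(\partial F(\bm{u}) + \tfrac{\delta\gamma}{4}(\bm{u}-\bm{v})\bigr) \times \bigl\{-\tfrac{\delta\gamma}{4}(\bm{u}-\bm{v})\bigr\} \times \bigl\{\tfrac{\delta}{8}\|\bm{u}-\bm{v}\|^2\bigr\}.
\]
Moreover, whenever $P_2$ is differentiable at $\bm{u}$, a second application of the smooth sum rule gives $\partial F(\bm{u}) = \nabla f(\bm{u}) + \partial P_1(\bm{u}) - \nabla P_2(\bm{u})$.

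First I would localize the iterates so that all relevant smoothness properties apply simultaneously. By Theorem \ref{nexPGA-theorem-subsequence}, $\bm{x}^* \in \mathcal{S}$, so Assumption \ref{assumC} together with the openness of $\mathcal{N}_{\mathcal{S}}$ furnishes some $\alpha > 0$ with $\mathcal{B}_{3\alpha}(\bm{x}^*) \subseteq \mathcal{N}_{\mathcal{S}}$, on which $\nabla f$ is Lipschitz with constant $L_f$ (Assumption \ref{assum-funs1}1) and $\nabla P_2$ with constant $L_{P_2}$ (Assumption \ref{assumC}). Invoking Proposition \ref{pro-basic-property}(v) and $\overline{\beta}_{k-1}\leq\delta\beta_{\max}$ (Assumption \ref{assum-para}), I can choose $J$ large enough that whenever $k\geq J$ and $\bm{x}^{k+1}\in\mathcal{B}_\alpha(\bm{x}^*)$, the three iterates $\bm{x}^k,\bm{x}^{k-1},\bm{x}^{k-2}$ and the extrapolated point $\overline{\bm{y}}^{k-1}$ all lie in $\mathcal{B}_{2\alpha}(\bm{x}^*)$. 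In particular $\bm{\xi}^{k-1}=\nabla P_2(\bm{x}^{k-1})$, and Proposition \ref{pro-gammak-bounded} (applied with $\rho=2\alpha$) yields a uniform bound $\overline{\gamma}_{k-1}\leq\overline{\gamma}_*$.

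Next, the first-order optimality condition for the subproblem \eqref{nexPGA-subpro} solved at outer iteration $k-1$ produces
\[
\bm{\zeta}^k := -\nabla f(\overline{\bm{y}}^{k-1}) + \nabla P_2(\bm{x}^{k-1}) - \overline{\gamma}_{k-1}(\bm{x}^k - \overline{\bm{y}}^{k-1}) \,\in\, \partial P_1(\bm{x}^k).
\]
Substituting this $\bm{\zeta}^k$ into the characterization $\partial F(\bm{x}^k) = \nabla f(\bm{x}^k) + \partial P_1(\bm{x}^k) - \nabla P_2(\bm{x}^k)$ and then into the $\bm{u}$-component of $\partial H_\delta$, the formal $\nabla f(\bm{x}^k)$ and $\nabla P_2(\bm{x}^k)$ terms combine with $\bm{\zeta}^k$ to produce the differences $\nabla f(\bm{x}^k) - \nabla f(\overline{\bm{y}}^{k-1})$ and $\nabla P_2(\bm{x}^{k-1}) - \nabla P_2(\bm{x}^k)$, together with the explicit residual $-\overline{\gamma}_{k-1}(\bm{x}^k - \overline{\bm{y}}^{k-1}) + \tfrac{\delta\overline{\gamma}_{k-1}}{4}(\bm{x}^k-\bm{x}^{k-1})$. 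Applying $L_f$, $L_{P_2}$, the bound $\overline{\gamma}_*$, and the triangle estimate $\|\bm{x}^k - \overline{\bm{y}}^{k-1}\| \leq \|\bm{x}^k-\bm{x}^{k-1}\| + \delta\beta_{\max}\|\bm{x}^{k-1}-\bm{x}^{k-2}\|$, the $\bm{u}$-component is controlled by a constant times $\|\bm{x}^k-\bm{x}^{k-1}\| + \|\bm{x}^{k-1}-\bm{x}^{k-2}\|$. The $\bm{v}$-component has norm $\tfrac{\delta\overline{\gamma}_*}{4}\|\bm{x}^k-\bm{x}^{k-1}\|$, while the scalar $\gamma$-component $\tfrac{\delta}{8}\|\bm{x}^k-\bm{x}^{k-1}\|^2$ is dominated by a constant multiple of $\|\bm{x}^k-\bm{x}^{k-1}\|$ once $J$ is large enough to make the step lengths uniformly small (again by Proposition \ref{pro-basic-property}(v)). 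Summing the three component bounds produces the announced constant $\tilde{c}$.

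I expect the main subtlety to lie in the localization step, namely fitting $\alpha$ and $J$ so that $\mathcal{B}_{2\alpha}(\bm{x}^*) \subseteq \mathcal{N}_{\mathcal{S}}$ (ensuring smoothness of $P_2$ and a single Lipschitz constant for $\nabla P_2$), that $\nabla f$ shares a Lipschitz constant on the same ball, and that $\bm{x}^k\in\mathcal{B}_{2\alpha}(\bm{x}^*)$ so Proposition \ref{pro-gammak-bounded} delivers the uniform bound $\overline{\gamma}_*$ on $\overline{\gamma}_{k-1}$. Once these three conditions are reconciled by a single choice of $(\alpha,J)$, the remainder of the proof is a routine bookkeeping exercise in the triangle inequality and local Lipschitz continuity, with no further regularity of $F$ required.
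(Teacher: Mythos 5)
Your proposal is correct and follows essentially the same route as the paper's proof: localize the iterates and the extrapolated point $\overline{\bm{y}}^{k-1}$ inside a compact ball contained in $\mathcal{N}_{\mathcal{S}}$ where $\nabla f$ and $\nabla P_2$ have common Lipschitz constants, invoke Proposition~\ref{pro-gammak-bounded} for a uniform bound on $\overline{\gamma}_{k-1}$, insert the first-order optimality condition for subproblem~\eqref{nexPGA-subpro} at outer iteration $k-1$ into $\partial_{\bm{u}} H_\delta = \nabla f + \partial P_1 - \nabla P_2 + \tfrac{\delta\gamma}{4}(\bm{u}-\bm{v})$ to exhibit an explicit element, and estimate the three block components with the triangle inequality, the Lipschitz constants, $\overline{\beta}_{k-1}\le\delta\beta_{\max}$, and the eventual bound $\|\bm{x}^k-\bm{x}^{k-1}\|\le 1$ to absorb the quadratic $\gamma$-component. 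The only cosmetic difference is your choice of nested radii $\alpha,2\alpha,3\alpha$ (and explicitly also placing $\bm{x}^{k-2}$ in the ball) versus the paper's $\vartheta$-then-$\alpha$ ordering via condition~\eqref{sup-alpha-vartheta}; both achieve the same reconciliation of the neighborhood and step-size constraints.
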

\begin{proof}
First, since $\bm{x}^*$ is an accumulation point of $\{\bm{x}^k\}$, it follows from Theorem \ref{nexPGA-theorem-subsequence} that $\bm{x}^*$ is a stationary point of problem \eqref{ge-DC-problem} and thus $\bm{x}^*\in\mathcal{S}\subset\mathcal{N}_{\mathcal{S}}$, where $\mathcal{S}$ and $\mathcal{N}_{\mathcal{S}}$ are defined in Assumption \ref{assumC}. This ensures the existence of a positive constant $\vartheta>0$ such that $\mathcal{B}_{\vartheta}(\bm{x}^*):=\left\{\bm{x}\in\mathbb{R}^n:\|\bm{x}-\bm{x}^*\|<\vartheta\right\}\subset\mathcal{N}_{\mathcal{S}}$. Moreover, since $\lim\limits_{k\rightarrow\infty}\|\bm{x}^{k+1}-\bm{x}^{k}\|=0$ by Proposition \ref{pro-basic-property}(iv), there exist an
integer $K_1$ and a positive constant $\alpha>0$ such that the following inequality holds:
\begin{equation}\label{sup-alpha-vartheta}
(1+\delta\beta_{\max})\,{\textstyle\sup_{t \geq K_1}}\left\{\|\bm{x}^{t+1}-\bm{x}^t\|\right\}+\alpha
< \min\{\vartheta,1\}. 
\end{equation}
In the following, consider an arbitrary $k\in\{j\in\mathbb{N}:\bm{x}^{j}\in\mathcal{B}_{\alpha}(\bm{x}^*),\,j\geq K_1+2\}$ with $\mathcal{B}_{\alpha}(\bm{x}^*)=\left\{\bm{x}\in\mathbb{R}^n:\|\bm{x}-\bm{x}^*\|\leq\alpha\right\}$. Clearly, $\bm{x}^{k}\in\mathcal{B}_{\alpha}(\bm{x}^*)\subset\mathcal{B}_{\vartheta}(\bm{x}^*)\subset\mathcal{N}_{\mathcal{S}}$. Thus, for such $\vartheta$ and $k$, Proposition \ref{pro-gammak-bounded} guarantees the existence of a positive constant $\overline{\gamma}_{\vartheta,\bm{x}^*}>0$ (depending on $\vartheta$ and $\bm{x}^*$) such that $\overline{\gamma}_{k-1}\leq\overline{\gamma}_{\vartheta,\bm{x}^*}$. Moreover, using \eqref{sup-alpha-vartheta} and the updating rule for $\overline{\bm{y}}^{k}$, we have 
\begin{equation*}
\begin{aligned}
\|\bm{x}^{k-1}-\bm{x}^*\|
&\leq
\|\bm{x}^{k-1}-\bm{x}^{k}\|
+ \|\bm{x}^{k}-\bm{x}^*\| \leq \,{\textstyle\sup_{t \geq K_1}} \left\{\|\bm{x}^{t+1}-\bm{x}^t\|\right\} + \alpha
< \vartheta, \\[3pt]
\|\overline{\bm{y}}^{k-1}-\bm{x}^*\|
&\leq\|\overline{\bm{y}}^{k-1}-\bm{x}^{k-1}\|+\|\bm{x}^{k-1}-\bm{x}^*\| \leq \delta\beta_{\max}\|\bm{x}^{k-1}-\bm{x}^{k-2}\|+\|\bm{x}^{k-1}-\bm{x}^{*}\| \\
&\leq (1+\delta\beta_{\max})\,{\textstyle\sup_{t \geq K_1}}\left\{\|\bm{x}^{t+1}-\bm{x}^t\|\right\} + \alpha
<\vartheta.
\end{aligned}  
\end{equation*}
These inequalities imply that $\bm{x}^{k-1},\,\overline{\bm{y}}^{k-1} \in \mathcal{B}_{\vartheta}(\bm{x}^*)\subset\mathcal{N}_{\mathcal{S}}$. On the other hand, recall from Assumption \ref{assumC} that $P_2$ is continuously differentiable on $\mathcal{N}_{\mathcal{S}}$, which, as shown above, contains both $\bm{x}^{k}$ and $\bm{x}^{k-1}$. Then, by the updating rule for $\overline{\bm{y}}^{k}$ and the first-order optimality condition of the subproblem \eqref{nexPGA-subpro}, we have that 
\begin{numcases}{}
\textstyle\overline{\bm{y}}^{k-1}=\bm{x}^{k-1}+\overline{\beta}_{k-1}(\bm{x}^{k-1}-\bm{x}^{k-2}),  \label{yk-and-optimal-1}  \\[3pt]
0 \in \nabla f(\overline{\bm{y}}^{k-1})+\overline{\gamma}_{k-1}(\bm{x}^{k}-\overline{\bm{y}}^{k-1})+\partial P_1(\bm{x}^{k})-\nabla P_2(\bm{x}^{k-1}). \label{yk-and-optimal-2}
\end{numcases}

Now, we consider the subdifferential of $H_{\delta}(\bm{u}, \bm{v}, \gamma)$ at the point $(\bm{x}^{k}, \bm{x}^{k-1}, \overline{\gamma}_{k-1})$. From the definition of $H_{\delta}$ in \eqref{definition-H-delta}, we have that
\begin{equation*}
\begin{aligned}
&\quad\textstyle\partial_{\bm{u}} H_{\delta}(\bm{x}^{k}, \bm{x}^{k-1}, \overline{\gamma}_{k-1})\\ 
&= \nabla f(\bm{x}^{k})+\partial P_1(\bm{x}^{k})-\nabla P_2(\bm{x}^{k})+\textstyle{\frac{\delta \overline{\gamma}_{k-1}}{4}}(\bm{x}^{k}-\bm{x}^{k-1}) \\
&\textstyle= \nabla f(\overline{\bm{y}}^{k-1})+\partial P_1(\bm{x}^{k})-\nabla P_2(\bm{x}^{k-1})+\nabla f(\bm{x}^{k})-\nabla f(\overline{\bm{y}}^{k-1})\\
&\qquad+\nabla P_2(\bm{x}^{k-1})-\nabla P_2(\bm{x}^{k})+\frac{\delta \overline{\gamma}_{k-1}}{4}(\bm{x}^{k}-\bm{x}^{k-1}) \\
&\textstyle\ni \nabla f(\bm{x}^{k})-\nabla f(\overline{\bm{y}}^{k-1})+ \nabla P_2(\bm{x}^{k-1})-\nabla P_2(\bm{x}^{k})+\frac{\delta \overline{\gamma}_{k-1}}{4}(\bm{x}^{k}-\bm{x}^{k-1})-\overline{\gamma}_{k-1}(\bm{x}^{k}-\overline{\bm{y}}^{k-1}),
\end{aligned}
\end{equation*}
where the inclusion follows from \eqref{yk-and-optimal-2}. Similarly,  we can obtain that
\begin{equation*}
    \textstyle\partial_{\bm{v}} H_{\delta}(\bm{x}^{k}, \bm{x}^{k-1}, \overline{\gamma}_{k-1})
= -\frac{\delta \overline{\gamma}_{k-1}}{4}(\bm{x}^{k}-\bm{x}^{k-1}),\,
\partial_\gamma H_{\delta}(\bm{x}^{k}, \bm{x}^{k-1}, \overline{\gamma}_{k-1})
= \frac{\delta}{8}\|\bm{x}^{k}-\bm{x}^{k-1}\|^2.
\end{equation*}
Thus, by combining the above relations, \eqref{sup-alpha-vartheta} and \eqref{yk-and-optimal-1}, together with the bounds $\overline{\beta}_{k-1}\leq\delta\beta_{\max}$ and $\overline{\gamma}_{k-1}\leq\overline{\gamma}_{\vartheta,\bm{x}^*}$, the local Lipschitz continuity of $\nabla f$ and $\nabla P_2$ on $\mathcal{N}_{\mathcal{S}}$, and the inclusion $\bm{x}^k,\bm{x}^{k-1},\overline{\bm{y}}^{k-1}\in\mathcal{B}_{\vartheta}(\bm{x}^*)\subset\mathcal{N}_{\mathcal{S}}$, we obtain the desired result.
\end{proof}

We are now ready to establish the sequential convergence for nexPGA.

\begin{theorem}\label{nexPGA-theorem-wholesequence}
Suppose that Assumptions \ref{assum-para}, \ref{assum-funs1} and \ref{assumC} hold. Let $\{\bm{x}^k\}$ be the sequence generated by the nexPGA in Algorithm \ref{algo-nexPGA}. Moreover, suppose that the sequence $\{\bm{x}^k\}$ has an accumulation point $\bm{x}^*$ and that the potential function $H_{\delta}$ in \eqref{definition-H-delta} is a KL function. Then, $\{\bm{x}^k\}$ converges to $\bm{x}^*$, which is a stationary point of problem \eqref{ge-DC-problem}.
\end{theorem}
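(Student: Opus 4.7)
My plan follows the classical KL-based framework for nonconvex descent methods, with modifications tailored to the ZH-type nonmonotonicity and the extrapolation in nexPGA. To start, I would invoke Theorem \ref{nexPGA-theorem-subsequence}: the accumulation point $\bm{x}^*$ is a stationary point with $F(\bm{x}^*) = \zeta$, and consequently $H_\delta(\bm{x}^*, \bm{x}^*, \gamma) = \zeta$ for every admissible $\gamma$. Fix any $\rho > 0$ so that $\bm{x}^{k+1} \in \mathcal{B}_\rho(\bm{x}^*)$ for infinitely many $k$, and consider the compact set $\Upsilon := \{\bm{x}^*\}\times\{\bm{x}^*\}\times[\gamma_{\min},\overline{\gamma}_{\rho,\bm{x}^*}]$, with $\overline{\gamma}_{\rho,\bm{x}^*}$ from Proposition \ref{pro-gammak-bounded}; on $\Upsilon$, $H_\delta$ is constantly $\zeta$. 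I would first dispose of the degenerate case $\mathcal{R}_{k_0} = \zeta$ for some $k_0$: combining \eqref{Rk-descent} with $\mathcal{R}_k \geq \zeta$ forces $\bm{x}^{k+1} = \bm{x}^k$ for all $k \geq k_0$, and the accumulation-point hypothesis identifies this constant with $\bm{x}^*$. Assume henceforth that $\mathcal{R}_k > \zeta$ for every $k$.

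The next step is to apply the uniformized KL property (Proposition \ref{uniKL}) to $\Upsilon$, obtaining $\varepsilon, \nu > 0$ and $\varphi \in \Phi_\nu$ such that
\begin{equation*}
\varphi'(H_\delta(\bm{u},\bm{v},\gamma) - \zeta)\cdot \operatorname{dist}(\bm{0},\partial H_\delta(\bm{u},\bm{v},\gamma)) \geq 1
\end{equation*}
on $\{(\bm{u},\bm{v},\gamma) : \operatorname{dist}((\bm{u},\bm{v},\gamma), \Upsilon) < \varepsilon,\; \zeta < H_\delta < \zeta+\nu\}$. Combining this with the subgradient bound of Lemma \ref{nexPGA-lem-dist-H-delta} and the reference-value descent $\mathcal{R}_k - \mathcal{R}_{k+1} \geq \frac{(1-\delta)p_{\min}\gamma_{\min}}{8}\|\bm{x}^{k+1}-\bm{x}^k\|^2$ from \eqref{Rk-descent}, the goal is to derive, for $k$ lying in the KL regime, a recursive inequality of the form
\begin{equation*}
\|\bm{x}^{k+1}-\bm{x}^k\| \leq \tfrac{1}{4}\bigl(\|\bm{x}^k-\bm{x}^{k-1}\|+\|\bm{x}^{k-1}-\bm{x}^{k-2}\|\bigr) + C_2\bigl[\varphi(\mathcal{R}_k-\zeta)-\varphi(\mathcal{R}_{k+1}-\zeta)\bigr].
\end{equation*}
The technical core is bridging the KL inequality, stated in terms of $H_\delta$, to the monotone reference values $\{\mathcal{R}_k\}$: the update of $\mathcal{R}_k$ yields $H_\delta(\bm{x}^{k+1},\bm{x}^k,\overline{\gamma}_k) = \mathcal{R}_{k+1} - \frac{1-p_{k+1}}{p_{k+1}}(\mathcal{R}_k-\mathcal{R}_{k+1})$, and concavity of $\varphi$ between the values $\mathcal{R}_k-\zeta$, $\mathcal{R}_{k+1}-\zeta$ and $H_\delta-\zeta$ converts the KL lower bound into a telescoping $\varphi$-difference. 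A Young-type splitting $\|\bm{x}^{k+1}-\bm{x}^k\|^2$ against $\|\bm{x}^k-\bm{x}^{k-1}\|+\|\bm{x}^{k-1}-\bm{x}^{k-2}\|$ then produces the claimed estimate.

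The third step is a trapping argument ensuring the recursive estimate applies indefinitely. Using that $\bm{x}^*$ is an accumulation point together with Proposition \ref{pro-basic-property}(v) and $\mathcal{R}_k\downarrow\zeta$, pick $K$ large enough that $\bm{x}^K \in \mathcal{B}_{\alpha/4}(\bm{x}^*)$ (with $\alpha$ from Lemma \ref{nexPGA-lem-dist-H-delta}), $\|\bm{x}^K-\bm{x}^{K-1}\|+\|\bm{x}^{K-1}-\bm{x}^{K-2}\|$ is arbitrarily small, and $C_2\varphi(\mathcal{R}_K-\zeta)$ is arbitrarily small. An induction based on the recursive inequality then keeps $\bm{x}^k \in \mathcal{B}_\alpha(\bm{x}^*)$ for every $k \geq K$, so that both the KL neighborhood and the domain of Lemma \ref{nexPGA-lem-dist-H-delta} remain valid. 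Summing over $k \geq K$, the factor $\tfrac{1}{4}$ absorbs into the left-hand sum (each $\|\bm{x}^{j+1}-\bm{x}^j\|$ appears at most twice on the right), while the $\varphi$-differences telescope to $C_2\varphi(\mathcal{R}_K-\zeta)$, giving $\sum_{k\geq K}\|\bm{x}^{k+1}-\bm{x}^k\| < \infty$. Hence $\{\bm{x}^k\}$ is Cauchy, its limit $\hat{\bm{x}}$ coincides with the accumulation point $\bm{x}^*$, and stationarity of $\bm{x}^*$ follows from Theorem \ref{nexPGA-theorem-subsequence}.

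The main obstacle is precisely the coupling of the ZH-type average-based reference value $\mathcal{R}_k$ with the KL inequality applied to $H_\delta$: the sequence $\{H_\delta(\bm{x}^{k+1},\bm{x}^k,\overline{\gamma}_k)\}$ need not be monotone, so the standard Attouch--Bolte--Svaiter descent-lemma argument does not transfer directly. Reconciling these two aspects via the update identity and the concavity argument sketched above is where the local Lipschitz setting and the absence of any iterate-boundedness assumption interact most delicately, because the KL neighborhood, the neighborhood provided by Lemma \ref{nexPGA-lem-dist-H-delta}, and the local bound on $\overline{\gamma}_k$ from Proposition \ref{pro-gammak-bounded} must all be simultaneously enforced inside the trapping region.
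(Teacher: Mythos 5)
Your overall scaffolding matches the paper's: invoke Theorem~\ref{nexPGA-theorem-subsequence} for stationarity and $F(\bm{x}^*)=\zeta$, handle the degenerate case $\mathcal{R}_{k_0}=\zeta$ separately, apply the uniformized KL property on the compact set $\Upsilon$, combine with the subgradient bound of Lemma~\ref{nexPGA-lem-dist-H-delta}, run a trapping induction, and sum to get finite length. You also correctly identify the central obstacle: the KL inequality is evaluated at $H_\delta(\bm{x}^k,\bm{x}^{k-1},\overline{\gamma}_{k-1})-\zeta$, while the descent inequality \eqref{Rk-descent} and the telescoping both live on $\{\mathcal{R}_k\}$, and these two do not coincide for a ZH-type method.

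However, the technical core you propose --- the one-step recursion
\begin{equation*}
\|\bm{x}^{k+1}-\bm{x}^k\| \leq \tfrac14\bigl(\|\bm{x}^k-\bm{x}^{k-1}\|+\|\bm{x}^{k-1}-\bm{x}^{k-2}\|\bigr) + C_2\bigl[\varphi(\mathcal{R}_k-\zeta)-\varphi(\mathcal{R}_{k+1}-\zeta)\bigr]
\end{equation*}
--- is asserted without a derivation, and it does not follow from the ingredients you list. The concavity step you invoke breaks in the wrong direction: KL gives a lower bound on $\varphi'(H_\delta(\bm{x}^k,\bm{x}^{k-1},\overline{\gamma}_{k-1})-\zeta)$, but $H_\delta(\bm{x}^k,\bm{x}^{k-1},\overline{\gamma}_{k-1})\le \mathcal{R}_k\le\mathcal{R}_{k-1}$ and $\varphi'$ is decreasing, so this gives no lower bound on $\varphi'(\mathcal{R}_k-\zeta)$ or $\varphi'(\mathcal{R}_{k-1}-\zeta)$, which is what the one-step telescoping would require. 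Worse, your own identity $H_\delta(\bm{x}^{k},\bm{x}^{k-1},\overline{\gamma}_{k-1})=\mathcal{R}_{k}-\frac{1-p_{k}}{p_{k}}(\mathcal{R}_{k-1}-\mathcal{R}_{k})$ shows that $H_\delta$ can fall well below $\mathcal{R}_k$ --- even below $\zeta$, where the KL inequality is simply inapplicable --- and the gap scales like $1/p_{\min}$, which your fixed constant $\tfrac14$ cannot absorb. The paper resolves this by working over windows of length $M=\lceil 2(1+\sqrt{1-p_{\min}})/(1-\sqrt{1-p_{\min}})\rceil^2$: it defines $\Xi_k=\sqrt{\mathcal{R}_k-\mathcal{R}_{k+1}}$, applies Jensen's inequality to $\sum_{i=k}^{k+M-1}\Xi_i$, and splits into two cases according to whether $H_\delta(\bm{x}^k,\bm{x}^{k-1},\overline{\gamma}_{k-1})$ lies above or below $\mathcal{R}_{k+M}$. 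In the first case KL is never invoked; the ZH update and $p_k\ge p_{\min}$ yield a contraction $\mathcal{R}_k-\mathcal{R}_{k+M}\le(1-p_{\min})(\mathcal{R}_{k-1}-\mathcal{R}_{k+M})$, and the chosen $M$ is exactly what makes this absorbable after Jensen. In the second case KL does apply, and concavity is used on the legitimate pair $H_\delta-\zeta>\mathcal{R}_{k+M}-\zeta$ to telescope $\varphi(\mathcal{R}_k-\zeta)-\varphi(\mathcal{R}_{k+M}-\zeta)$. Your proposal collapses the window to length one and skips the case split entirely; this is the missing idea, and without it the summability argument does not close.
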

\begin{proof}
In view of Theorem \ref{nexPGA-theorem-subsequence}, we only need to show that $\{\bm{x}^k\}$ is convergent.

We first see from Lemma \ref{nexPGA-lem-dist-H-delta} that there exist $\tilde{c}>0$, $K_1>0$ and $\alpha>0$ such that 
\begin{equation}\label{dist-H-delta}
\operatorname{dist}\left(\bm{0}, \partial H_{\delta}(\bm{x}^{k}, \bm{x}^{k-1}, \overline{\gamma}_{k-1})\right)
\leq \tilde{c}\left(\|\bm{x}^{k}-\bm{x}^{k-1}\|+\|\bm{x}^{k-1}-\bm{x}^{k-2}\|\right)
\end{equation}
for all $k\in\{j\in\mathbb{N}:\bm{x}^{j+1}\in\mathcal{B}_{\alpha}(\bm{x}^*),\,j\geq K_1\}$ with $\mathcal{B}_{\alpha}(\bm{x}^*)=\left\{\bm{x}\in\mathbb{R}^n:\|\bm{x}-\bm{x}^*\|\leq\alpha\right\}$. For such $\alpha$ and $k$, Proposition \ref{pro-gammak-bounded} guarantees the existence of a positive constant $\overline{\gamma}_{\alpha,\bm{x}^*}>0$ (depending on $\alpha$ and $\bm{x}^*$) such that $\overline{\gamma}_k\leq\overline{\gamma}_{\alpha,\bm{x}^*}$. Next, let $\Upsilon:=\{(\bm{x}^*, \bm{x}^*, \overline{\gamma}): {\gamma}_{\min } \leq \overline{\gamma} \leq \overline{\gamma}_{\alpha,\bm{x}^*}\}$, which is a compact subset of $\operatorname{dom} \partial H_\delta$. Then, for any $\overline{\gamma}$ satisfying ${\gamma}_{\min } \leq \overline{\gamma} \leq \overline{\gamma}_{\alpha,\bm{x}^*}$, it follows from Theorem \ref{nexPGA-theorem-subsequence} that $H_\delta(\bm{x}^*,\bm{x}^*,\overline{\gamma}) = F(\bm{x}^*)={\zeta}$. Thus, we can conclude that $H_{\delta}\equiv{\zeta}$ on $\Upsilon$. This fact, together with the assumption that $H_{\delta}$ is a KL function, and the uniformized KL property (Proposition \ref{uniKL}), implies that there exist $\varepsilon>0$, $\nu>0$, and $\varphi\in\Phi_{\nu}$ such that
\begin{equation}\label{phi-dist}
\varphi'(H_{\delta}(\bm{u},\bm{v},\gamma)-{\zeta})\cdot\mathrm{dist}(\bm{0},\,\partial H_{\delta}(\bm{u},\bm{v},\gamma))\geq1,
\end{equation}
for all $(\bm{u},\bm{v},\gamma)$ satisfying $\mathrm{dist}((\bm{u},\bm{v},\gamma),\,\Upsilon)<\varepsilon$ and ${\zeta}<H_{\delta}(\bm{u},\bm{v},\gamma)<{\zeta}+\nu$. Moreover, since $\{\mathcal{R}_k\}$ is non-increasing (by Proposition \ref{pro-basic-property} (ii)), there exists an integer $K_2$ such that $\zeta<\mathcal{R}_k<\zeta+\nu$ holds for all $k\geq K_2$\footnote{In case that there exists some $\bar{k}$ such that $\mathcal{R}_{\bar{k}} = \zeta$, we see from the monotonicity of $\{\mathcal{R}_k\}$ that $\mathcal{R}_{k} = \zeta$ holds for all $k\ge\bar{k}$. Then one can see from \eqref{Rk-descent} that $\bm{x}^k=\bm{x}^{\bar{k}}$ holds for all $k\ge\bar{k}$. That said, the convergence of $\{\bm{x}^k\}$ is shown in this trivial case.}.

In the following, for notational simplicity, we define 
\begin{equation}\label{defNotion}
\begin{aligned}
M&:=\textstyle\left\lceil\frac{2\big(1+\sqrt{1-p_{\min}}\big)}{1-\sqrt{1-p_{\min}}}\right\rceil^2, \quad \ell(k):=k+M-1, \quad \Xi_k:=\sqrt{\mathcal{R}_k-\mathcal{R}_{k+1}}, \vspace{-1mm} \\
\Delta_{i,j}^{\varphi}&:=\varphi(\mathcal{R}_i-\zeta)-\varphi(\mathcal{R}_j-\zeta),
\quad \pi:=\sqrt{\textstyle\frac{(1-\delta)p_{\min}\gamma_{\min}}{8}},
\end{aligned}
\end{equation}
where $\lceil a \rceil$ is the smallest integer greater than or equal to $a$. It follows from \eqref{Rk-descent} that 
\begin{equation}\label{Xik-xkdiff}
\textstyle\|\bm{x}^{k+1}-\bm{x}^k\|\leq\frac{\Xi_k}{\pi}.
\end{equation}
In addition, let $\{\bm{x}^{k_j}\}_{j\in\mathbb{N}}$ be a subsequence converging to $\bm{x}^*$. With these preparations, we proceed to prove the convergence of $\{\bm{x}^k\}$ and divide the proof into four steps.

\vspace{1mm}
\textit{Step 1.} We claim that, for the above positive constants $\alpha,\varepsilon>0$, there exists an index $J\in\mathbb N$ such that the following inequality holds:
\begin{equation}\label{nexPGA-sum-rj}
\textstyle\widehat{Q}
:=\sup_{j\geq J}\left\{\mathcal{Q}_j:=\|\bm{x}^{k_j}-\bm{x}^*\|
+ \frac{4}{\pi}\sum_{i=k_j-2}^{\ell(k_j)}\Xi_i
+ \frac{\tilde{c}}{2\pi^2}\sum_{i=k_j}^{\ell(k_j)} \varphi\big(\mathcal{R}_i-\zeta\big)\right\}
\leq \min\left\{\alpha,\,\frac{\varepsilon}{2}\right\}.
\end{equation}
Recall from \eqref{defNotion} that $\ell(k)-k=M-1$, which is a fixed constant. Thus, the number of terms in each summation within $\mathcal{Q}_j$ is fixed and independent of $k$. Moreover, since $\{\mathcal{R}_k\}$ converges monotonically to $\zeta$ (by Proposition \ref{pro-basic-property}(ii)) and $\varphi$ is continuous on $[0,\nu)$ with $\varphi(0)=0$ (by the properties required on the desingularization function $\varphi$ in the KL property), it follows that $\sum_{i=k_j-2}^{\ell(k_j)}\Xi_i\to0$ and $\sum_{i=k_j}^{\ell(k_j)} \varphi\big(\mathcal{R}_i-\zeta\big)\to0$ as $j\to\infty$. These, together with $\bm{x}^{k_j} \to \bm{x}^*$ as $j\to\infty$ imply $\mathcal{Q}_j\to0$, and therefore there exists an index $J$ such that \eqref{nexPGA-sum-rj} holds.

\vspace{1mm}
\textit{Step 2.} We show that
\begin{equation}\label{ineq-sum-Ei}
\textstyle\frac{1-\sqrt{1-p_{\min }}}{\sqrt{M}}\sum_{i=k}^{\ell(k)}\Xi_i
\leq \left(\frac{1}{2}+\sqrt{1-p_{\min}}\right)
\big(\Xi_{k-2}+\Xi_{k-1}\big)
+ \frac{\tilde{c}}{2\pi}\Delta^{\varphi}_{k,k+M}
\end{equation}
holds for all $k\in\big\{j\in\mathbb{N}\,:\,\bm{x}^{j-1},\,\bm{x}^j\in\mathcal{B}_{\widehat{Q}}(\bm{x}^*),\,j\geq\max\{K_1,K_2\}\big\}$ with $\mathcal{B}_{\widehat{Q}}(\bm{x}^*):=\big\{\bm{x}\in\mathbb{R}^n:\|\bm{x}-\bm{x}^*\|\leq \widehat{Q}\big\}$.

To prove this, consider an arbitrary index $k$ from the above index set. For such $k\geq\max\{K_1,K_2\}$, we have that $\|\bm{x}^{k-1}-\bm{x}^*\|\leq\min\{\alpha,\frac{\varepsilon}{2}\}$, $\|\bm{x}^k-\bm{x}^*\|\leq\min\{\alpha,\frac{\varepsilon}{2}\}$, $\zeta<\mathcal{R}_k<\zeta+\nu$, $\overline{\gamma}_{k-1}\leq\overline{\gamma}_{\alpha,\bm{x}^*}$, and the inequality \eqref{dist-H-delta} holds. Moreover, by Jensen's inequality, we have that 
\begin{equation*}
\begin{aligned}
{\textstyle\frac{1}{\sqrt{M}}\sqrt{\mathcal{R}_k-\mathcal{R}_{k+M}}}
&= {\textstyle\sqrt{\frac{1}{M}\left(\mathcal{R}_k-\mathcal{R}_{k+1}+\cdots
+\mathcal{R}_{k+M-1}-\mathcal{R}_{k+M}\right)}}  \\
&\geq {\textstyle\frac{1}{M}\left(\sqrt{\mathcal{R}_k-\mathcal{R}_{k+1}}+\cdots
+\sqrt{\mathcal{R}_{k+M-1}-\mathcal{R}_{k+M}}\right)}.
\end{aligned}
\end{equation*}
This, together with $\ell(k)=k+M-1$, $\Xi_k = \sqrt{\mathcal{R}_k - \mathcal{R}_{k+1}}$ and $1-\sqrt{1-p_{\min }}>0$ (due to $p_{\min}\in(0,1)$), yields
\begin{equation}\label{jensen-ineq}
\textstyle\frac{1-\sqrt{1-p_{\min }}}{\sqrt{M}}\sum_{i=k}^{\ell(k)} \Xi_i
\leq \left(1-\sqrt{1-p_{\min}}\right) \sqrt{\mathcal{R}_k-\mathcal{R}_{k+M}}.
\end{equation}
Next, we prove \eqref{ineq-sum-Ei} by estimating the right-hand side of \eqref{jensen-ineq} in two cases.

\vspace{1mm}
\textbf{Case 1:} $H_{\delta}(\bm{x}^{k}, \bm{x}^{k-1}, \overline{\gamma}_{k-1}) \leq \mathcal{R}_{k+M}$. In this case, by the updating rule of $\mathcal{R}_k$,
\begin{equation*}
\begin{aligned}
& \mathcal{R}_k-\mathcal{R}_{k+M}  =(1-p_k) \mathcal{R}_{k-1}+p_k H_{\delta}(\bm{x}^{k}, \bm{x}^{k-1}, \overline{\gamma}_{k-1})-\mathcal{R}_{k+M} \\
\leq &  (1-p_k) \mathcal{R}_{k-1}+p_k \mathcal{R}_{k+M}-\mathcal{R}_{k+M}=(1-p_k)(\mathcal{R}_{k-1}-\mathcal{R}_{k+M}) \\
\leq &  (1-p_{\min})(\mathcal{R}_{k-1}-\mathcal{R}_{k+M}) \qquad(\text{by} ~p_k\in[p_{\min},1]~\text{and}~\mathcal{R}_{k-1}\geq\mathcal{R}_{k+M})\\
= &  (1-p_{\min})(\mathcal{R}_{k-1}-\mathcal{R}_k+\mathcal{R}_k-\mathcal{R}_{k+M}).
\end{aligned}
\end{equation*}
Taking square roots on both sides of this inequality and applying the inequality $\sqrt{a+b} \leq \sqrt{a}+\sqrt{b}$ for all $a,b\geq 0$, we obtain by rearranging the resulting terms 
\begin{equation*}
\big(1-\sqrt{1-p_{\min}}\big)\sqrt{\mathcal{R}_k-\mathcal{R}_{k+M}}
\leq \sqrt{1-p_{\min}}\,\Xi_{k-1}.
\end{equation*}
Substituting this into \eqref{jensen-ineq} yields the desired inequality \eqref{ineq-sum-Ei} for this case.

\vspace{1mm}
\textbf{Case 2:} $H_{\delta}(\bm{x}^{k}, \bm{x}^{k-1}, \overline{\gamma}_{k-1}) >\mathcal{R}_{k+M}$. In this case, it follows from the choice of $k$ and Proposition \ref{pro-basic-property}(i) that $\zeta<\mathcal{R}_{k+M}<H_{\delta}(\bm{x}^{k}, \bm{x}^{k-1}, \overline{\gamma}_{k-1})\leq\mathcal{R}_k<\zeta+\nu$ and $\mathrm{dist}((\bm{x}^k,\bm{x}^{k-1},\overline{\gamma}_{k-1}),\,\Upsilon)<\varepsilon$.
Thus, by \eqref{phi-dist}, we have that
\begin{equation}\label{KL-phi-dist}
\varphi^{\prime}\big(H_{\delta}(\bm{x}^{k}, \bm{x}^{k-1}, \overline{\gamma}_{k-1})-\zeta\big)
\cdot\operatorname{dist}\big(\bm{0}, \partial H_{\delta}(\bm{x}^{k}, \bm{x}^{k-1}, \overline{\gamma}_{k-1})\big) \geq 1.
\end{equation}
Moreover, we see that
\begin{equation*}
\hspace{-2mm}{\small
\begin{aligned}
&\,\quad\operatorname{dist}\big(\bm{0}, \partial H_{\delta}(\bm{x}^{k}, \bm{x}^{k-1}, \overline{\gamma}_{k-1})\big)\cdot\Delta^{\varphi}_{k,k+M} \\
&=\operatorname{dist}\big(\bm{0}, \partial H_{\delta}(\bm{x}^{k}, \bm{x}^{k-1}, \overline{\gamma}_{k-1})\big)
\cdot\big[\,\varphi(\mathcal{R}_k-\zeta)-\varphi(\mathcal{R}_{k+M}-\zeta)\,\big] \\
&\geq \operatorname{dist}\big(\bm{0}, \partial H_{\delta}(\bm{x}^{k}, \bm{x}^{k-1}, \overline{\gamma}_{k-1})\big)\cdot
\left[\varphi\big(H_{\delta}(\bm{x}^{k}, \bm{x}^{k-1}, \overline{\gamma}_{k-1})-\zeta\big)
-\varphi\big(\mathcal{R}_{k+M}-\zeta\big)\right]\\
&\geq \operatorname{dist}\big(\bm{0}, \partial H_{\delta}(\bm{x}^{k}, \bm{x}^{k-1}, \overline{\gamma}_{k-1})\big)\,\varphi^{\prime}\big(H_{\delta}(\bm{x}^{k}, \bm{x}^{k-1}, \overline{\gamma}_{k-1})-\zeta\big)\cdot\big(H_{\delta}(\bm{x}^{k}, \bm{x}^{k-1}, \overline{\gamma}_{k-1})-\mathcal{R}_{k+M}\big) \\
& \geq H_{\delta}(\bm{x}^{k}, \bm{x}^{k-1}, \overline{\gamma}_{k-1})-\mathcal{R}_{k+M},
\end{aligned}}
\end{equation*}
where the first inequality follows from the monotonicity of $\varphi$ and $H_{\delta}(\bm{x}^{k}, \bm{x}^{k-1}, \overline{\gamma}_{k-1})\leq \mathcal{R}_k$ by Proposition \ref{pro-basic-property}(i); the second inequality follows from the concavity of $\varphi$; the third inequality follows from \eqref{KL-phi-dist} and the hypothesis $H_{\delta}(\bm{x}^{k}, \bm{x}^{k-1}, \overline{\gamma}_{k-1}) > \mathcal{R}_{k+M}$. This, together with \eqref{dist-H-delta} and \eqref{Xik-xkdiff}, yields 
\begin{equation}\label{H-R-Xi}
\textstyle H_{\delta}(\bm{x}^{k}, \bm{x}^{k-1}, \overline{\gamma}_{k-1})
- \mathcal{R}_{k+M}
\leq \frac{\tilde{c}}{\pi}\Delta^{\varphi}_{k,k+M}
\big(\Xi_{k-1} + \Xi_{k-2}\big).
\end{equation}
Now, using the updating rule of $\mathcal{R}_k$, we obtain that
\begin{equation*}
\begin{aligned}
\mathcal{R}_k-\mathcal{R}_{k+M}
&= (1-p_k) \mathcal{R}_{k-1} + p_k H_{\delta}(\bm{x}^{k}, \bm{x}^{k-1}, \overline{\gamma}_{k-1})
- \mathcal{R}_{k+M} \\
&= p_{k}\big(H_{\delta}(\bm{x}^{k}, \bm{x}^{k-1}, \overline{\gamma}_{k-1})-\mathcal{R}_{k+M}\big)
+ (1-p_{k})\big(\mathcal{R}_{k-1}-\mathcal{R}_{k+M}\big) \\
&\leq \big(H_{\delta}(\bm{x}^{k}, \bm{x}^{k-1}, \overline{\gamma}_{k-1})-\mathcal{R}_{k+M}\big)
+ (1-p_{\min})\big(\mathcal{R}_{k-1}-\mathcal{R}_{k+M}\big) \\
&\leq \textstyle\frac{\tilde{c}}{\pi}\Delta^{\varphi}_{k,k+M}
\big(\Xi_{k-1} + \Xi_{k-2}\big)
+ (1-p_{\min})\big(\mathcal{R}_{k-1}-\mathcal{R}_k
+ \mathcal{R}_k-\mathcal{R}_{k+M}\big),
\end{aligned}
\end{equation*}
where the first inequality follows from $p_k\in[p_{\min},1]$ and the second inequality follows from \eqref{H-R-Xi}. Taking square roots on both sides of the above inequality and applying the inequalities $\sqrt{a+b}\leq\sqrt{a}+\sqrt{b}$ and $\sqrt{ab}\leq\frac{a+b}{2}$ for $a, b \geq 0$, we see that 
\begin{equation*}
\begin{aligned}
\sqrt{\mathcal{R}_k-\mathcal{R}_{k+M}}  &\leq \sqrt{\textstyle{\frac{\tilde{c}}{\pi}}\Delta^{\varphi}_{k,k+M}
\big(\Xi_{k-1} + \Xi_{k-2}\big)}
+ \sqrt{1-p_{\min}}
\sqrt{\mathcal{R}_{k-1}-\mathcal{R}_k+\mathcal{R}_k-\mathcal{R}_{k+M}} \\
&\leq \textstyle{\frac{\tilde{c}}{2\pi}}\Delta^{\varphi}_{k,k+M}
+ \textstyle{\frac{1}{2}}\big(\Xi_{k-1} + \Xi_{k-2}\big)
+ \sqrt{1-p_{\min}}
\left(\sqrt{\mathcal{R}_{k-1}-\mathcal{R}_k}+\sqrt{\mathcal{R}_k-\mathcal{R}_{k+M}}\right),
\end{aligned} 
\end{equation*}
which implies that
\begin{equation*}
\textstyle\left(1-\sqrt{1-p_{\min}}\right)\sqrt{\mathcal{R}_k-\mathcal{R}_{k+M}}
\leq \frac{1}{2}\Xi_{k-2}
+ \left(\frac{1}{2}+\sqrt{1-p_{\min}}\right)\Xi_{k-1}
+ \frac{\tilde{c}}{2\pi}\Delta^{\varphi}_{k,k+M}.
\end{equation*}
Substituting this into \eqref{jensen-ineq} completes the proof of \eqref{ineq-sum-Ei} for this case.

\vspace{1mm}
\textit{Step 3.} Without loss of generality, we assume that $J$ is a sufficiently large index such that \eqref{nexPGA-sum-rj} holds and $k_{J}\geq\max\{K_1,K_2\}$. For such $k_J$, we claim that the following relations hold for all $k\geq \ell(k_{J})$:
\begin{eqnarray}
\bm{x}^k&\in&\mathcal{B}_{\widehat{Q}}(\bm{x}^*), \label{first-statement} \\
\textstyle\sum_{i=\ell(k_{J})}^{k}\Xi_i
&\leq& \big(1+2\sqrt{1-p_{\min}}\big)\textstyle\sum_{i=k_{J}-2}^{\ell(k_{J})-1}\Xi_i
+ \textstyle\frac{\tilde{c}}{2\pi}\sum_{i=k_{J}}^{\ell(k_{J})} \varphi\big(\mathcal{R}_i-\zeta\big). \label{second-statement}
\end{eqnarray}

We prove them by induction. First, for all $k \in\left\{k_{J}-1,k_{J}, k_{J}+1,\cdots,\ell(k_{J})\right\}$, it follows from \eqref{Xik-xkdiff} and \eqref{nexPGA-sum-rj} that
\begin{equation*}
\begin{aligned}
\|\bm{x}^k-\bm{x}^*\|
&\leq\|\bm{x}^{k_{J}}-\bm{x}^*\|
+\textstyle\sum_{i=k_{J}-1}^k\|\bm{x}^{i+1}-\bm{x}^{i}\|
\leq \|\bm{x}^{k_{J}}-\bm{x}^*\|
+ \sum_{i=k_{J}-1}^{\ell(k_{J})}\|\bm{x}^{i+1}-\bm{x}^i\| \\
&\textstyle\leq\|\bm{x}^{k_{J}}-\bm{x}^*\|
+ \frac{1}{\pi}\sum_{i=k_{J}-1}^{\ell(k_{J})}\Xi_i
\leq \widehat{Q}.
\end{aligned}
\end{equation*}
This proves that \eqref{first-statement} holds for all $k \in\left\{k_{J}-1,k_{J}, k_{J}+1,\cdots,\ell(k_{J})\right\}$, meaning that the iterates $\bm{x}^{k_J-1},\bm{x}^{k_J},\cdots,\bm{x}^{\ell(k_J)}$ lie within the neighborhood $\mathcal{B}_{\widehat{Q}}(\bm{x}^*)$. Using this fact and applying inequality \eqref{ineq-sum-Ei}, we have that
\begin{equation}\label{sqrtM-Xi-ellkJ}
\begin{aligned}
&\textstyle\big(1-\sqrt{1-p_{\min}}\big)\sqrt{M}\,\Xi_{\ell(k_{J})}
\leq \frac{1-\sqrt{1-p_{\min}}}{\sqrt{M}}\sum_{i=k_{J}}^{\ell(k_{J})} \sum_{t=i}^{\ell(i)}\,\Xi_t \\
&\textstyle\leq \left(\frac{1}{2}+\sqrt{1-p_{\min}}\right)\sum_{i=k_{J}}^{\ell(k_{J})} \big(\Xi_{i-2}+\Xi_{i-1}\big)
+ \frac{\tilde{c}}{2\pi}\sum_{i=k_{J}}^{\ell(k_{J})}
\Delta^{\varphi}_{i,i+M} \\
&\textstyle\leq \big(1+2\sqrt{1-p_{\min}}\big)\sum_{i=k_{J}-2}^{\ell(k_{J})}\Xi_i
+ \frac{\tilde{c}}{2\pi}\sum_{i=k_{J}}^{\ell(k_{J})}\varphi(\mathcal{R}_i-\zeta)\\
&\textstyle=\big(1+2\sqrt{1-p_{\min}}\big)\left(\sum_{i=k_{J}-2}^{\ell(k_{J})-1}\Xi_i+\Xi_{\ell(k_{J})}\right)
+ \frac{\tilde{c}}{2\pi}\sum_{i=k_{J}}^{\ell(k_{J})}\varphi(\mathcal{R}_i-\zeta),
\end{aligned}
\end{equation}
where the first inequality follows from the nonnegativity of $\Xi_k$ and the fact that the term $\Xi_{\ell(k_{J})}$ appears $M$ times in the double sum; the second inequality is obtained by applying \eqref{ineq-sum-Ei} to each $k\in\{k_J,k_J+1,\cdots,\ell(k_J)\}$; the third inequality follows from $\Delta_{i,i+M}^{\varphi}:=\varphi(\mathcal{R}_i-\zeta)-\varphi(\mathcal{R}_{i+M}-\zeta)
\leq\varphi(\mathcal{R}_i-\zeta)$ for any $i$. Recall from the definition of $M$ in \eqref{defNotion} that $\big(1-\sqrt{1-p_{\min}}\big)\sqrt{M}\geq 2\big(1+\sqrt{1-p_{\min}}\big)$, which further implies
\begin{equation}\label{M-relation-geq1}
\big(1-\sqrt{1-p_{\min}}\big)\sqrt{M}
- \big(1+2\sqrt{1-p_{\min}}\big)
\geq 1.
\end{equation}
This, together with \eqref{sqrtM-Xi-ellkJ}, yields that
\begin{equation*}
\textstyle\Xi_{\ell(k_J)}
\leq \big(1+2\sqrt{1-p_{\min}}\big)\sum_{i=k_{J}-2}^{\ell(k_{J})-1}\Xi_i
+ \frac{\tilde{c}}{2\pi}\sum_{i=k_{J}}^{\ell(k_{J})} \varphi\big(\mathcal{R}_i-\zeta\big).
\end{equation*}
This shows that \eqref{second-statement} holds for $k=\ell(k_{J})$.

Next, suppose that \eqref{first-statement} and \eqref{second-statement} hold for all $k$ from $\ell(k_{J})$ to some $K\geq\ell(k_{J})$. It remains to show that they also hold for $k=K+1$. Indeed, since \eqref{second-statement} holds for $k=K$ and $1+2\sqrt{1-p_{\min }}\leq3$ (due to $p_{\min}\in(0,1)$), we have that 
\begin{equation*}
{\textstyle\sum_{i=\ell(k_{J})}^K\Xi_i
\leq 3\sum_{i=k_{J}-2}^{\ell(k_{J})-1}\Xi_i
+ \frac{\tilde{c}}{2\pi}\sum_{i=k_{J}}^{\ell(k_{J})} \varphi\big(\mathcal{R}_i-\zeta\big)},
\end{equation*}
which further implies that
\begin{equation*}
{\textstyle\sum_{i=k_{J}}^K\Xi_i\leq \sum_{i=k_{J}-2}^{\ell(k_{J})-1}\Xi_i
+ \sum_{i=\ell(k_{J})}^K\Xi_i
\leq 4 \sum_{i=k_{J}-2}^{\ell(k_{J})-1}\Xi_i
+ \frac{\tilde{c}}{2\pi}\sum_{i=k_{J}}^{\ell(k_{J})} \varphi\big(\mathcal{R}_i-\zeta\big)}.
\end{equation*}
Using this relation, together with \eqref{Xik-xkdiff}, we see that
\begin{equation*}
\begin{aligned}
\|\bm{x}^{K+1}-\bm{x}^*\|
&\leq \|\bm{x}^{k_{J}}-\bm{x}^*\|
+ \textstyle\sum_{i=k_{J}}^K\|\bm{x}^{i+1}-\bm{x}^i\|
\leq \|\bm{x}^{k_{J}}-\bm{x}^*\|
+ \frac{1}{\pi}\sum_{i=k_{J}}^K\Xi_i \\
&\leq \|\bm{x}^{k_{J}}-\bm{x}^*\|
+ \textstyle{\frac{4}{\pi}}\sum_{i=k_{J}-2}^{\ell(k_{J})-1}\Xi_i
+ \frac{\tilde{c}}{2\pi^2}\sum_{i=k_{J}}^{\ell(k_{J})} \varphi\big(\mathcal{R}_i-\zeta\big).
\end{aligned}
\end{equation*}
This, along with \eqref{nexPGA-sum-rj}, implies that \eqref{first-statement} holds for $k=K+1$.

We now verify that \eqref{second-statement} holds for $k=K+1$. From the above discussion, we see that the iterates $\bm{x}^{k_J-1},\bm{x}^{k_J},\cdots,\bm{x}^{K+1}$ lie within the neighborhood $\mathcal{B}_{\widehat{Q}}(\bm{x}^*)$. Using this fact and applying inequality \eqref{ineq-sum-Ei}, we have that
\begin{equation}\label{ineqadd}
\begin{aligned}
&\textstyle\quad \big(1-\sqrt{1-p_{\min}})\sqrt{M} \sum_{i=\ell(k_{J})}^{K+1}\Xi_i
\leq \frac{1-\sqrt{1-p_{\min}}}{\sqrt{M}}
\sum_{i=k_{J}}^{K+1}\sum_{t=i}^{\ell(i)}\Xi_t  \\
&\textstyle \leq \left(\frac{1}{2}+\sqrt{1-p_{\min}}\right) \sum_{i=k_{J}}^{K+1}\big(\Xi_{i-2}+\Xi_{i-1}\big)
+ \frac{\tilde{c}}{2\pi}\sum_{i=k_{J}}^{K+1}
\Delta^{\varphi}_{i,i+M}  \\
&\textstyle\leq \big(1+2\sqrt{1-p_{\min}}\big) \sum_{i=k_{J}-2}^{K+1}\Xi_i
+ \frac{\tilde{c}}{2\pi}\sum_{i=k_{J}}^{\ell(k_{J})} \varphi\big(\mathcal{R}_i-\zeta\big)\\
&\textstyle=\big(1+2\sqrt{1-p_{\min}}\big) \left(\sum_{i=k_{J}-2}^{\ell(k_{J})-1}\Xi_i+\sum_{i=\ell(k_{J})}^{K+1}\Xi_i\right)
+ \frac{\tilde{c}}{2\pi}\sum_{i=k_{J}}^{\ell(k_{J})} \varphi\big(\mathcal{R}_i-\zeta\big),
\end{aligned}
\end{equation}
where the first inequality follows from the nonnegativity of $\Xi_k$ and the fact that the term $\Xi_i$ with $i\in\{\ell(k_{J}), \cdots, K+1\}$ appears $M$ times in the double sum; the second inequality is obtained by applying \eqref{ineq-sum-Ei} to each $k\in\{k_J,k_J+1,\cdots,K+1\}$; the third inequality follows because 
\begin{equation*}
{\small
\begin{aligned}
\textstyle\sum_{i=k_{J}}^{K+1}\Delta^{\varphi}_{i,i+M}
&=\textstyle\sum_{i=k_{J}}^{K+1}\varphi(\mathcal{R}_i-\zeta)-\sum_{i=k_{J}}^{K+1}\varphi(\mathcal{R}_{i+M}-\zeta)
=\textstyle\sum_{i=k_{J}}^{K+1}\varphi(\mathcal{R}_i-\zeta)-\sum_{i=k_{J}+M}^{K+M+1}\varphi(\mathcal{R}_i-\zeta) \\
&=\textstyle{\sum_{i=k_{J}}^{{k_{J}}+M-1}}\varphi(\mathcal{R}_i-\zeta)-\sum_{i=K+2}^{K+M+1}\varphi(\mathcal{R}_i-\zeta)
\leq\sum_{i=k_{J}}^{\ell({k_{J}})}\varphi(\mathcal{R}_i-\zeta).
\end{aligned}}
\end{equation*}
Using \eqref{ineqadd}, together with \eqref{M-relation-geq1}, we obtain that 
\begin{equation*}
\textstyle\sum_{i=\ell(k_{J})}^{K+1}\Xi_i
\leq \big(1+2\sqrt{1-p_{\min}}\big)\sum_{i=k_{J}-2}^{\ell(k_{J})-1}\Xi_i
+ \frac{\tilde{c}}{2\pi}\sum_{i=k_{J}}^{\ell(k_{J})} \varphi\big(\mathcal{R}_i-\zeta\big),
\end{equation*}
which shows that \eqref{second-statement} holds for $k=K+1$ and completes the induction.

\vspace{1mm}
\textit{Step 4.} Since \eqref{second-statement} holds for all $k\geq \ell(k_{J})$, taking the limit in \eqref{second-statement} yields 
\begin{equation*}
\textstyle\sum_{i=\ell(k_{J})}^{\infty}\Xi_i
\leq \big(1+2\sqrt{1-p_{\min}}\big)\sum_{i=k_{J}-2}^{\ell(k_{J})-1}\Xi_i
+ \frac{\tilde{c}}{2\pi}\sum_{i=k_{J}}^{\ell(k_{J})} \varphi\big(\mathcal{R}_i-\zeta\big)<\infty.  
\end{equation*}
This, together with \eqref{Xik-xkdiff}, yields 
\begin{equation*}
{\textstyle\sum_{i=\ell(k_{J})}^{\infty}\|\bm{x}^{i+1}-\bm{x}^i\|
\leq\frac{1}{\pi}\sum_{i=\ell(k_{J})}^{\infty}\Xi_i
<\infty}, 
\end{equation*}
which implies $\sum_{i=0}^{\infty}\|\bm{x}^{i+1}-\bm{x}^{i}\|<\infty$ and hence $\{\bm{x}^{k}\}$ is convergent.
\end{proof}

Next, based on the KL exponent, we establish the local convergence rates of the generated sequence $\{\bm{x}^k\}$ and its corresponding objective function values. After establishing the convergence of the entire sequence in Theorem \ref{nexPGA-theorem-wholesequence}, we know that if $\{\bm{x}^k\}$ has an accumulation point $\bm{x}^*$, then $\bm{x}^*$ is the unique accumulation point (i.e., the limit point) of $\{\bm{x}^k\}$ and hence \eqref{dist-H-delta-k} holds for all sufficiently large $k$. With this result in hand, we can apply arguments similar to those in \cite[Section 3.2]{qtpq2024convergence}, with suitable adaptations and refinements, to derive the desired rate estimates.

\begin{theorem}\label{theorem-fun-rate}
Suppose that Assumptions \ref{assum-para}, \ref{assum-funs1} and \ref{assumC} hold.  Let $\{\bm{x}^k\}$ be the sequence generated by the nexPGA in Algorithm \ref{algo-nexPGA}, and let $\zeta$ be given in Proposition \ref{pro-basic-property}(ii). Moreover, suppose that the sequence $\{\bm{x}^k\}$ has an accumulation point $\bm{x}^*$ and that the potential function $H_{\delta}$ in \eqref{definition-H-delta} is a KL function with an exponent $\theta \in[0,1)$. Then, the following statements hold for all sufficiently large $k$. \vspace{0.5mm}
\begin{itemize}
\item[{\rm(i)}] If $\theta=0$, there exist $c_1>0$ and $\eta_1\in(0,1)$ such that $\zeta-c_1\eta_1^k\leq F(\bm{x}^k)\leq\zeta$.

\vspace{0.5mm}
\item[{\rm(ii)}] If $\theta \in(0,\frac{1}{2}]$, there exist $c_2>0$ and $\eta_2 \in(0,1)$ such that $|F(\bm{x}^k)-\zeta|\leq c_2 \eta_2^k$.

\vspace{0.5mm}
\item[{\rm(iii)}] If $\theta \in(\frac{1}{2}, 1)$, there exists $c_3>0$ such that $|F(\bm{x}^k)-\zeta|\leq c_3k^{-\frac{1}{2\theta-1}}$.
\end{itemize}
\end{theorem}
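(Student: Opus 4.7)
The plan is to derive a one-parameter recursion for the monotone decreasing nonnegative sequence $r_k := \mathcal{R}_k - \zeta$ by combining three ingredients already in hand: the sufficient-descent bound $\pi^2\|\bm{x}^{k+1}-\bm{x}^k\|^2 \leq r_k - r_{k+1}$ used in \eqref{Xik-xkdiff}, the subdifferential estimate \eqref{dist-H-delta-k} from Lemma \ref{nexPGA-lem-dist-H-delta}, and the KL inequality at exponent $\theta$ applied to $H_\delta$. The full-sequence convergence $\bm{x}^k \to \bm{x}^*$ from Theorem \ref{nexPGA-theorem-wholesequence} is essential: it allows \eqref{dist-H-delta-k} to hold for all large $k$, and it permits the uniformized KL property to be invoked over the compact set $\Upsilon = \{(\bm{x}^*,\bm{x}^*,\bar\gamma) : \gamma_{\min}\leq \bar\gamma \leq \bar\gamma_{\alpha,\bm{x}^*}\}$ on which $H_\delta\equiv\zeta$. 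If $\mathcal{R}_{\bar k}=\zeta$ for some $\bar k$, then monotonicity and \eqref{Rk-descent} force $\bm{x}^k=\bm{x}^{\bar k}$ for all $k\geq \bar k$ and the conclusions hold trivially; hence the analysis proceeds under $r_k>0$.

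First, combining Lemma \ref{nexPGA-lem-dist-H-delta} with the uniformized KL inequality with exponent $\theta$ applied at $(\bm{x}^k,\bm{x}^{k-1},\bar\gamma_{k-1})$, together with $\|\bm{x}^{i+1}-\bm{x}^{i}\|^2 \leq (r_i - r_{i+1})/\pi^2$, yields, after squaring and using $(a+b)^2\leq 2(a^2+b^2)$, a bound of the form
\begin{equation*}
a^2\, s_k^{2\theta} \leq \frac{2\tilde c^{\,2}}{\pi^2}\bigl(r_{k-2}-r_k\bigr),
\qquad s_k := H_\delta(\bm{x}^k,\bm{x}^{k-1},\bar\gamma_{k-1})-\zeta,
\end{equation*}
for all sufficiently large $k$. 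Second, the averaging rule $r_k = (1-p_k)r_{k-1}+p_k s_k$ gives the two-sided comparison $s_k \leq r_k \leq s_k + \frac{1-p_{\min}}{p_{\min}}(r_{k-1}-r_k)$. Substituting the upper bound for $r_k$ into the preceding display, I would absorb the extra $(r_{k-1}-r_k)$ term into $(r_{k-2}-r_k)$ (treating the cases $\theta\leq 1/2$ and $\theta>1/2$ separately, using $r_k\to 0$ to dominate $r_k^{2\theta}$ by $s_k^{2\theta}$ in the former case and a subadditivity-type argument $(a+b)^{2\theta}\leq C(a^{2\theta}+b^{2\theta})$ in the latter). The result is a clean scalar recursion of the form
\begin{equation*}
r_k^{2\theta}\leq C(r_{k-2}-r_k) \qquad \text{for all sufficiently large } k.
\end{equation*}

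Third, from this recursion the three rate regimes follow from standard arguments (cf.\ the analogous step in \cite[Section 3.2]{qtpq2024convergence}). For $\theta=0$ the left-hand side is $1$ while the right-hand side vanishes, contradicting $r_k>0$; hence $r_k\equiv 0$ eventually, which is stronger than (i). For $\theta\in(0,1/2]$, since $r_k\to 0$, eventually $r_k^{2\theta}\geq r_k$, which gives $r_k\leq \frac{C}{1+C}r_{k-2}$ and hence linear decay $r_k\leq c_2\eta_2^k$. For $\theta\in(1/2,1)$, I apply the classical sequence lemma (see, e.g., \cite[Theorem 2]{abrs2010proximal}) to $2\theta>1$ to get $r_k=\mathcal O(k^{-1/(2\theta-1)})$. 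Finally, the rate for $|F(\bm{x}^k)-\zeta|$ is transferred from $r_k$ via
\begin{equation*}
F(\bm{x}^k) = H_\delta(\bm{x}^k,\bm{x}^{k-1},\bar\gamma_{k-1}) - \tfrac{\delta\bar\gamma_{k-1}}{8}\|\bm{x}^k-\bm{x}^{k-1}\|^2,
\end{equation*}
using $H_\delta-\zeta=s_k\leq r_k$ for the upper bound $F(\bm{x}^k)-\zeta\leq r_k$ and $|s_k-r_k|,\ \|\bm{x}^k-\bm{x}^{k-1}\|^2 = \mathcal O(r_{k-1}-r_k)$ for the lower bound, noting $\bar\gamma_{k-1}$ is uniformly bounded by Proposition \ref{pro-gammak-bounded}; since $r_{k-1}-r_k$ decays at least as fast as $r_k$, all stated rates transfer.

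The main obstacle I anticipate is Step 3: cleanly replacing the mixed quantity $s_k^{2\theta}$ by $r_k^{2\theta}$, since the ZH-type averaging makes $r_k$ and $s_k$ genuinely different and one must carefully avoid losing the exponent. A second technical nuisance is that the subdifferential estimate \eqref{dist-H-delta-k} involves a two-step memory $\|\bm{x}^k-\bm{x}^{k-1}\|+\|\bm{x}^{k-1}-\bm{x}^{k-2}\|$, which forces the recursion to be two-step ($r_{k-2}-r_k$); handling this requires splitting $\{r_k\}$ into even and odd index subsequences, or equivalently rescaling the rate constants, both of which leave the asymptotic rates in (i)--(iii) unchanged.
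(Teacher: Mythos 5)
Your overall plan — invoking whole-sequence convergence, the subdifferential estimate from Lemma \ref{nexPGA-lem-dist-H-delta}, and the uniformized KL inequality to obtain a two-step scalar recursion for $r_k := \mathcal{R}_k - \zeta$, then transferring rates to $|F(\bm{x}^k)-\zeta|$ via $\|\bm{x}^k-\bm{x}^{k-1}\|^2 = \mathcal{O}(r_{k-1}-r_k)$ — is the right skeleton and matches the paper. However, there is a genuine gap you do not address. The KL inequality \eqref{phi-dist-2} can only be applied at $(\bm{x}^k,\bm{x}^{k-1},\overline{\gamma}_{k-1})$ when $\zeta < H_\delta(\bm{x}^k,\bm{x}^{k-1},\overline{\gamma}_{k-1}) < \zeta+\nu$, i.e.\ when $s_k>0$. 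But the ZH averaging permits $s_k\leq 0$ while $r_k>0$: the current potential value can dip below $\zeta$ while the running convex-combination reference $\mathcal{R}_k$ stays strictly above. Your bound $a^2 s_k^{2\theta}\leq \tfrac{2\tilde c^2}{\pi^2}(r_{k-2}-r_k)$, and therefore the postulated recursion $r_k^{2\theta}\leq C(r_{k-2}-r_k)$, is never established on the index set $\{k: s_k\leq 0\}$. The paper treats this case separately (statement (1a) in the appendix): when $H_\delta\leq\zeta$, the averaging rule directly gives $r_k\leq(1-p_{\min})r_{k-1}\leq(1-p_{\min})r_{k-2}$, and this branch must then be spliced with the KL branch in each of the three $\theta$-regimes.

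This gap infects your $\theta=0$ conclusion. From $1\leq C(r_{k-2}-r_k)$ with $r_{k-2}-r_k\to 0$ you conclude "$r_k\equiv 0$ eventually," but the contradiction you reach only shows that the KL branch ($s_k>0$) cannot occur for large $k$; it does not force $r_k=0$. What actually follows is that $s_k\leq 0$ for all large $k$, whence $r_k\leq(1-p_{\min})r_{k-2}$, i.e.\ linear decay — which is exactly what (i) asserts. (Note also that (i) requires the one-sided estimate $F(\bm{x}^k)\leq\zeta$; this comes precisely from the $H_\delta\leq\zeta$ branch together with the definition of $H_\delta$, and your transfer step does not produce it.) Finally, your step from $s_k^{2\theta}$ to $r_k^{2\theta}$ for $\theta\in(0,\tfrac12]$ is stated as "using $r_k\to 0$ to dominate $r_k^{2\theta}$ by $s_k^{2\theta}$"; since $s_k\leq r_k$, that inequality points the wrong way. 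The workable argument, which is what the paper does, is to use $s_k^{2\theta}\geq s_k$ when $0<s_k<1$ and $2\theta\leq 1$, combine with $s_k\leq r_k$ to deduce $s_k\leq\tfrac{a_1}{1+a_1}r_{k-2}$, and then push this through the averaging identity to obtain a contraction on $r_k$ directly, without ever needing a clean recursion on $r_k^{2\theta}$.
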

\begin{proof}
First, since $\bm{x}^*$ is an accumulation point of $\{\bm{x}^k\}$, it follows from Theorem \ref{nexPGA-theorem-wholesequence} that the whole sequence $\{\bm{x}^k\}$ converges to $\bm{x}^*$. This ensures the existence of a positive constant $\vartheta>0$ such that $\{\bm{x}^k\}\subseteq\mathcal{B}_{\vartheta}(\bm{x}^*):=\left\{\bm{x}\in\mathbb{R}^n:\|\bm{x}-\bm{x}^*\|\leq\vartheta\right\}$. For such $\vartheta$, Proposition \ref{pro-gammak-bounded} further guarantees the existence of a positive constant $\overline{\gamma}_{\vartheta,\bm{x}^*}>0$ (depending  on $\vartheta$ and $\bm{x}^*$) such that $\overline{\gamma}_k\leq\overline{\gamma}_{\vartheta,\bm{x}^*}$ holds for all $k\geq0$. Let $\Delta^k_\mathcal{R}:=\mathcal{R}_k-\zeta$ for all $k\geq0$. It follows from Proposition \ref{pro-basic-property}(ii) that $\{\Delta^k_\mathcal{R}\}$ is non-increasing and $\Delta^k_\mathcal{R}\geq0$. Moreover, we see from \eqref{Rk-descent} and $H_{\delta}(\bm{x}^{k+1}, \bm{x}^k, \overline{\gamma}_{k})\leq\mathcal{R}_{k+1}$ (by Proposition \ref{pro-basic-property}(i)) that
\begin{equation}\label{Rkdiff-xkdiff-square}
d_1\|\bm{x}^{k+1}-\bm{x}^k\|^2
\leq \mathcal{R}_k - \mathcal{R}_{k+1}
= \Delta^k_\mathcal{R} - \Delta^{k+1}_\mathcal{R}
\leq \mathcal{R}_k-H_{\delta}(\bm{x}^{k+1}, \bm{x}^k, \overline{\gamma}_{k})
\end{equation}
for any $k\geq0$, where $d_1:=\frac{(1-\delta)p_{\min}\gamma_{\min}}{8}>0$. Then, for any $k\geq1$, we have that
\begin{equation}\label{Fxk-Rk-1}
\begin{aligned}
|F(\bm{x}^k)-\zeta|
&=\left|H_\delta\big(\bm{x}^k, \bm{x}^{k-1}, \overline{\gamma}_{k-1}\big)-{\textstyle\frac{\delta\overline{\gamma}_{k-1}}{8}}\|\bm{x}^k-\bm{x}^{k-1}\|^2-\zeta\right| \\
&\leq\left|H_\delta\big(\bm{x}^k, \bm{x}^{k-1}, \overline{\gamma}_{k-1}\big)-\zeta\right|
+ {\textstyle\frac{\delta\overline{\gamma}_{k-1}}{8}}\|\bm{x}^k-\bm{x}^{k-1}\|^2  \\
&=\left|\mathcal{R}_{k-1}+{\textstyle\frac{1}{p_k}}(\mathcal{R}_{k}-\mathcal{R}_{k-1})-\zeta\right|
+ {\textstyle\frac{\delta\overline{\gamma}_{k-1}}{8}}\|\bm{x}^k-\bm{x}^{k-1}\|^2 \\
&\leq \Delta^{k-1}_\mathcal{R}
+ {\textstyle\frac{1}{p_k}}(\mathcal{R}_{k-1}-\mathcal{R}_{k})
+ {\textstyle\frac{\delta\overline{\gamma}_{k-1}}{8}}\|\bm{x}^k-\bm{x}^{k-1}\|^2 \\
&\leq \Delta^{k-1}_\mathcal{R}
+ {\textstyle\frac{1}{p_k}}(\Delta^{k-1}_\mathcal{R}-\Delta^{k}_\mathcal{R})
+ {\textstyle\frac{\delta\overline{\gamma}_{k-1}}{8d_1}}(\Delta^{k-1}_\mathcal{R}-\Delta^{k}_\mathcal{R}) \\
&\leq {\textstyle\left(1+\frac{1}{p_{\min}}
+\frac{\delta\overline{\gamma}_{\vartheta,\bm{x}^*}}{8d_1}\right)}
\Delta_\mathcal{R}^{k-1}
= d_2\Delta_\mathcal{R}^{k-1},
\end{aligned}
\end{equation}
where $d_2:=1+\frac{1}{p_{\min}}+\frac{\delta \overline{\gamma}_{\vartheta,\bm{x}^*}}{8d_1}$, the first equality follows from the definition of $H_{\delta}$ in \eqref{definition-H-delta}, the second equality follows from $H_{\delta}(\bm{x}^{k}, \bm{x}^{k-1}, \overline{\gamma}_{k-1})=\mathcal{R}_{k-1}+\frac{1}{p_k}(\mathcal{R}_{k}-\mathcal{R}_{k-1})$ by the updating rule of $\mathcal{R}_k$, the third inequality follows from \eqref{Rkdiff-xkdiff-square} and the last inequality follows from $\overline{\gamma}_k\leq\overline{\gamma}_{\vartheta,\bm{x}^*}$, $\Delta_\mathcal{R}^{k}\geq0$ and $p_k \geq p_{\min}>0$ for all $k\geq0$.

With \eqref{Fxk-Rk-1} at hand, we can characterize the convergence rate of $\{|F(\bm{x}^k)-\zeta|\}$ by analyzing the rate of $\{\Delta_\mathcal{R}^{k}\}$. To this end, we first consider the case where $\Delta_\mathcal{R}^{K_0}=0$ for some $K_0\geq0$. Since $\{\Delta_\mathcal{R}^{k}\}$ is non-increasing, it follows that $\Delta_\mathcal{R}^{k}=0$ for all $k \geq K_0$. This, together with \eqref{Fxk-Rk-1}, immediately proves all statements. From now on, we consider the case where $\Delta_\mathcal{R}^{k}>0$ for all $k\geq0$.

In view of the boundedness of $\{\overline{\gamma}_k\}$ (since $\gamma_{\min}\leq\overline{\gamma}_k\leq\overline{\gamma}_{\vartheta,\bm{x}^*}$ for all $k\geq0$) and the fact that the whole sequence $\{\bm{x}^k\}_{k=0}^{\infty}$ converges to $\bm{x}^*$, we see that the set of cluster points of $\{(\bm{x}^k, \bm{x}^{k-1}, \overline{\gamma}_{k-1})\}_{k=0}^{\infty}$ is contained in
\begin{equation*}
\Upsilon := \left\{(\bm{x}^*, \bm{x}^*, \overline{\gamma}):
{\gamma}_{\min} \leq \overline{\gamma} \leq \overline{\gamma}_{\vartheta,\bm{x}^*}\right\},
\end{equation*}
which is a compact subset of $\operatorname{dom} \partial H_\delta$. Moreover, for any $\overline{\gamma}$ satisfying ${\gamma}_{\min } \leq \overline{\gamma} \leq \overline{\gamma}_{\vartheta,\bm{x}^*}$, it follows from Theorem \ref{nexPGA-theorem-subsequence} that $H_\delta(\bm{x}^*,\bm{x}^*,\overline{\gamma}) = F(\bm{x}^*)={\zeta}$. Thus, we can conclude that $H_{\delta}\equiv{\zeta}$ on $\Upsilon$. This fact, together with the assumption that $H_{\delta}$ is a KL function with an exponent $\theta$ and the uniformized KL property (Proposition~\ref{uniKL}), implies that there exist $\varepsilon>0$, $\nu>0$, and $\varphi\in\Phi_{\nu}$ such that
\begin{equation}\label{phi-dist-2}
\varphi'(H_{\delta}(\bm{u},\bm{v},\gamma)-{\zeta})
\cdot\mathrm{dist}(\bm{0},\,\partial H_{\delta}(\bm{u},\bm{v},\gamma))\geq1,
~~\text{with}~~\varphi(s)=\tilde{a}s^{1-\theta}~\text{for some}~\tilde{a}>0,
\end{equation}
for all $(\bm{u},\bm{v},\gamma)$ satisfying $\mathrm{dist}((\bm{u},\bm{v},\gamma),\,\Upsilon)<\varepsilon$ and ${\zeta}<H_{\delta}(\bm{u},\bm{v},\gamma)<{\zeta}+\nu$. Next, we recall from Lemma \ref{nexPGA-lem-dist-H-delta} that there exist $\tilde{c}>0$, $K_1>0$ and $\alpha>0$ such that
\begin{equation}\label{dist-H-delta-2}
\operatorname{dist}\left(\bm{0}, \partial H_{\delta}(\bm{x}^{k}, \bm{x}^{k-1}, \overline{\gamma}_{k-1})\right)
\leq \tilde{c}\left(\|\bm{x}^{k}-\bm{x}^{k-1}\|+\|\bm{x}^{k-1}-\bm{x}^{k-2}\|\right)
\end{equation}
holds for all $k\in\{j\in\mathbb{N}:\bm{x}^{j+1}\in\mathcal{B}_{\alpha}(\bm{x}^*),j\geq K_1\}$ with $\mathcal{B}_{\alpha}(\bm{x}^*)=\left\{\bm{x}\in\mathbb{R}^n:\|\bm{x}-\bm{x}^*\|\leq\alpha\right\}$. Since $\Upsilon$ contains all the cluster points of $\{(\bm{x}^k, \bm{x}^{k-1}, \overline{\gamma}_{k-1})\}_{k=0}^{\infty}$, we have
\begin{equation*}
\lim_{k \rightarrow \infty} \operatorname{dist}\big((\bm{x}^k, \bm{x}^{k-1}, \overline{\gamma}_{k-1}), \Upsilon\big)=0.
\end{equation*}
This, together with the fact that the sequence $\{\mathcal{R}_k\}$ converges monotonically to $\zeta$ (by Proposition \ref{pro-basic-property}(ii)) and the fact that the whole sequence $\{\bm{x}^k\}$ converges to $\bm{x}^*$, implies that there exists an integer $K_2$ such that $\zeta<\mathcal{R}_k<\zeta+\min\left\{\nu,\,\frac{d_1}{{2}\tilde{a}^2\tilde{c}^2},\,1\right\}$, $\operatorname{dist}((\bm{x}^k, \bm{x}^{k-1}, \overline{\gamma}_{k-1}), \Upsilon)<\varepsilon$, and $\|\bm{x}^k-\bm{x}^*\|\leq\alpha$ whenever $k\geq K_2$. In the following, for notational simplicity, let $\Delta^{k}_{H_{\delta}}:=H_{\delta}(\bm{x}^k, \bm{x}^{k-1}, \overline{\gamma}_{k-1})-\zeta$ and $\overline{K}_1=\max\{K_1,K_2\}+2$. Then, we see that $\Delta^{k}_{H_{\delta}}\leq\Delta^k_{\mathcal{R}}<1$, \eqref{dist-H-delta-2}, and $\Delta_\mathcal{R}^{k-2}<\frac{d_1}{2\tilde{a}^2\tilde{c}^2}$ hold for  all $k\geq\overline{K}_1$. With these preparations, we next proceed to prove the desired results and divide the proof into three steps.

\vspace{1mm}
\textit{\textbf{Step 1}}. First, we claim that for any $k\geq\overline{K}_1$, the following statements hold:
\begin{itemize}
\item [(1a)] If $H_{\delta}(\bm{x}^{k}, \bm{x}^{k-1}, \overline{\gamma}_{k-1})\leq\zeta$, there exists $\rho_1\in(0,1)$ such that $\Delta_{\mathcal{R}}^{k}\leq\rho_1\Delta_{\mathcal{R}}^{k-2}$;
\item[(1b)] If $H_{\delta}(\bm{x}^{k}, \bm{x}^{k-1}, \overline{\gamma}_{k-1})>\zeta$, there exists $a_1>0$ such that $\big(\Delta_{H_{\delta}}^k\big)^{2\theta}\leq a_1\big(\Delta_\mathcal{R}^{k-2}-\Delta_\mathcal{R}^k\big)$.
\end{itemize}

We first consider $H_{\delta}(\bm{x}^{k}, \bm{x}^{k-1}, \overline{\gamma}_{k-1})\leq\zeta$. In this case, together with the updating rule of $\mathcal{R}_k$, $0<p_{\min}\leq p_k\leq1$ and the fact that $\{\Delta^k_{\mathcal{R}}\}_{k=0}^{\infty}$ is non-increasing, we have that
\begin{equation*}
\begin{aligned}
\Delta_{\mathcal{R}}^k
&=p_k\big(H_{\delta}(\bm{x}^{k}, \bm{x}^{k-1},\overline{\gamma}_{k-1})-\zeta\big)
+ (1-p_k)\big(\mathcal{R}_{k-1}-\zeta\big) \\
&\leq(1-p_k)\Delta_{\mathcal{R}}^{k-1} 
\leq(1-p_{\min})\Delta_{\mathcal{R}}^{k-1}
\leq\rho_1\Delta^{k-2}_{\mathcal{R}},
\end{aligned}
\end{equation*}
where $\rho_1:=1-p_{\min}\in(0,1)$. This shows that statement (1a) holds.

We next consider $H_{\delta}(\bm{x}^{k}, \bm{x}^{k-1}, \overline{\gamma}_{k-1})>\zeta$. In this case, it follows from Proposition \ref{pro-basic-property}(i) and $k\geq\overline{K}_1$ that $\zeta<H_{\delta}(\bm{x}^{k}, \bm{x}^{k-1}, \overline{\gamma}_{k-1})\leq\mathcal{R}_k<\zeta+\nu$. Thus, by \eqref{phi-dist-2}, we have
\begin{equation*}
\varphi^{\prime}\big(H_{\delta}(\bm{x}^{k}, \bm{x}^{k-1}, \overline{\gamma}_{k-1})-\zeta\big)
\cdot\operatorname{dist}\big(\bm{0}, \partial H_{\delta}(\bm{x}^{k}, \bm{x}^{k-1}, \overline{\gamma}_{k-1})\big) \geq 1.
\end{equation*}
Moreover, we see that
\begin{equation*}
\begin{aligned}
1&\leq\varphi^{\prime}\big(H_{\delta}(\bm{x}^{k}, \bm{x}^{k-1}, \overline{\gamma}_{k-1})-\zeta\big)
\cdot\operatorname{dist}\big(\bm{0}, \partial H_{\delta}(\bm{x}^{k}, \bm{x}^{k-1}, \overline{\gamma}_{k-1})\big)\\
&\leq \tilde{a}(1-\theta)\cdot(\Delta_{H_{\delta}}^k)^{-\theta}
\cdot\tilde{c}\big(\|\bm{x}^{k}-\bm{x}^{k-1}\|
+\|\bm{x}^{k-1}-\bm{x}^{k-2}\|\big) \\
&\leq \tilde{a}\tilde{c}(1-\theta)\cdot(\Delta_{H_{\delta}}^k)^{-\theta}
\cdot\sqrt{2\big(\|\bm{x}^k-\bm{x}^{k-1}\|^2
+\|\bm{x}^{k-1}-\bm{x}^{k-2}\|^2\big)} \\
&\leq \sqrt{2/d_1}\tilde{a}\tilde{c}(1-\theta)
\cdot(\Delta_{H_{\delta}}^k)^{-\theta}\cdot\sqrt{\mathcal{R}_{k-2}-\mathcal{R}_k}\\
&=\sqrt{2/d_1}\tilde{a}\tilde{c}(1-\theta)
\cdot(\Delta_{H_{\delta}}^k)^{-\theta}\cdot\sqrt{\Delta_\mathcal{R}^{k-2}-\Delta_\mathcal{R}^k},
\end{aligned}
\end{equation*}
where the second inequality follows from \eqref{dist-H-delta-2} and the last inequality follows from \eqref{Rkdiff-xkdiff-square}. This inequality further yields that
\begin{equation*}
\big(\Delta_{H_{\delta}}^k\big)^{2\theta}
\leq (2/d_1)\tilde{a}^2\tilde{c}^2(1-\theta)^2
\big(\Delta_\mathcal{R}^{k-2}-\Delta_\mathcal{R}^k\big)
= a_1\big(\Delta_\mathcal{R}^{k-2}-\Delta_\mathcal{R}^k\big),
\end{equation*}
where $a_1:=(2/d_1)\tilde{a}^2\tilde{c}^2(1-\theta)^2$. This shows that statement (1b) holds.

\vspace{1mm}
\textit{\textbf{Step 2}}. Now, we claim that for any $k\geq\overline{K}_1$, the following statements hold:
\begin{itemize}
\item [(2a)] If $\theta=0$, $F(\bm{x}^k)\leq\zeta$ and there exists $\rho_2\in(0,1)$ such that $\Delta_{\mathcal{R}}^{k}\leq\rho_2\Delta_{\mathcal{R}}^{k-2}$;
\item[(2b)] If $\theta\in(0,\frac{1}{2}]$, there exists $\rho_3\in(0,1)$ such that $\Delta_{\mathcal{R}}^{k}\leq\rho_3\Delta_{\mathcal{R}}^{k-2}$;
\item[(2c)] If $\theta\in(\frac{1}{2},1)$, there exists $a_2>0$ such that $(\Delta_{\mathcal{R}}^{k})^{1-2\theta}-(\Delta_{\mathcal{R}}^{k-2})^{1-2\theta}\geq a_2$.
\end{itemize}

Statement (2a). Suppose that $\theta=0$. We consider the following two cases. We first consider $H_{\delta}(\bm{x}^{k}, \bm{x}^{k-1}, \overline{\gamma}_{k-1})\leq\zeta$. In this case, for any $k\geq\overline{K}_1$, it follows from statement (1a) in \textit{\textbf{Step 1}} and the definition of $H_{\delta}$ in \eqref{definition-H-delta} that $F(\bm{x}^k)\leq\zeta$ and there exists $\rho_1\in(0,1)$ such that $\Delta_{\mathcal{R}}^{k}\leq\rho_1\Delta_{\mathcal{R}}^{k-2}$. We next consider $H_{\delta}(\bm{x}^{k}, \bm{x}^{k-1}, \overline{\gamma}_{k-1})>\zeta$. In this case, we see from statement (1b) with $\theta=0$ in \textit{\textbf{Step 1}} and $\Delta_{\mathcal{R}}^{k}\geq0$ that, for any $k\geq\overline{K}_1$,
\begin{equation*}
\textstyle\Delta_\mathcal{R}^{k-2}\geq\Delta_\mathcal{R}^{k-2}-\Delta_\mathcal{R}^k
\geq\frac{1}{a_1}=\frac{d_1}{2\tilde{a}^2\tilde{c}^2},
\end{equation*}
which contradicts to the fact that $\Delta_\mathcal{R}^{k-2}<\frac{d_1}{2\tilde{a}^2\tilde{c}^2}$ holds whenever $k\geq \overline{K}_1$. Thus, this case cannot happen. Combining with these two cases, we prove statement (2a).

Statement (2b). Suppose that $\theta\in(0,\frac{1}{2}]$. We consider two cases.

We first consider $H_{\delta}(\bm{x}^{k}, \bm{x}^{k-1}, \overline{\gamma}_{k-1})\leq\zeta$. In this case, for any $k\geq\overline{K}_1$, it follows from the statement (1a) in \textit{\textbf{Step 1}} that there exists $\rho_1\in(0,1)$ such that $\Delta_{\mathcal{R}}^{k}\leq\rho_1\Delta_{\mathcal{R}}^{k-2}$, which gives the desired result.

We next consider $H_{\delta}(\bm{x}^{k}, \bm{x}^{k-1}, \overline{\gamma}_{k-1})>\zeta$. In this case, for any $k\geq\overline{K}_1$, it follows from $0<\Delta_{H_{\delta}}^k\leq\Delta_\mathcal{R}^k<1$, $2\theta\in(0,1]$ and statement (1b) in \textit{\textbf{Step 1}} that
\begin{equation*}
\textstyle\Delta_{H_{\delta}}^k\leq(\Delta_{H_{\delta}}^k)^{2\theta} \leq a_1(\Delta_\mathcal{R}^{k-2}-\Delta_\mathcal{R}^k)\leq a_1(\Delta_\mathcal{R}^{k-2}-\Delta_{H_{\delta}}^k)
\quad\Longrightarrow\quad
\Delta_{H_{\delta}}^k\leq\frac{a_1}{1+a_1}\Delta_\mathcal{R}^{k-2}.
\end{equation*}
This, along with the updating rule of $\mathcal{R}_k$, $H_{\delta}(\bm{x}^{k}, \bm{x}^{k-1}, \overline{\gamma}_{k-1})\leq\mathcal{R}_k\leq\mathcal{R}_{k-1}\leq\mathcal{R}_{k-2}$ (by Proposition \ref{pro-basic-property}(i)\&(ii)) and $p_k\in[p_{\min},1]$ yields that
\begin{equation*}
\begin{aligned}
\Delta_\mathcal{R}^k
&= p_kH_{\delta}(\bm{x}^{k}, \bm{x}^{k-1}, \overline{\gamma}_{k-1})+(1-p_k)\mathcal{R}_{k-1}-\zeta 
\leq p_kH_{\delta}(\bm{x}^{k}, \bm{x}^{k-1}, \overline{\gamma}_{k-1})+(1-p_k)\mathcal{R}_{k-2}-\zeta\\
&= p_k\Delta_{H_{\delta}}^k+(1-p_k)\Delta_\mathcal{R}^{k-2}
\leq {\textstyle\left(\frac{a_1}{1+a_1}p_k+1-p_k\right)}\Delta_\mathcal{R}^{k-2}
= {\textstyle\left(1-\frac{p_k}{1+a_1}\right)}\Delta_\mathcal{R}^{k-2} 
\leq {\textstyle\left(1-\frac{p_{\min}}{1+a_1}\right)}\Delta_\mathcal{R}^{k-2}.
\end{aligned}
\end{equation*}

Combining with the above two cases, we can conclude that $\Delta_{\mathcal{R}}^{k}\leq\rho_3\Delta_{\mathcal{R}}^{k-2}$, where $\rho_3:=\max\left\{\rho_1,1-\frac{p_{\min}}{1+a_1}\right\}\in(0,1)$. This shows that statement (2b) holds.

Statement (2c). Suppose that $\theta\in(\frac{1}{2},1)$, We consider the following two cases.

We first consider $H_{\delta}(\bm{x}^{k}, \bm{x}^{k-1}, \overline{\gamma}_{k-1})\leq\zeta$. In this case, for any $k\geq\overline{K}_1$, it follows from statement (1a) in \textit{\textbf{Step 1}} that there exists $\rho_1\in(0,1)$ such that $\Delta_{\mathcal{R}}^{k}\leq\rho_1\Delta_{\mathcal{R}}^{k-2}$. Since $1-2\theta<0$ and $\Delta_{\mathcal{R}}^{k-2},\,\Delta_{\mathcal{R}}^{k}>0$, we further have that
\begin{equation*}
\big(\Delta_{\mathcal{R}}^{k}\big)^{1-2\theta}
\geq \rho_1^{1-2\theta}\big(\Delta_{\mathcal{R}}^{k-2}\big)^{1-2\theta},
\end{equation*}
which implies that
\begin{equation*}
\textstyle\big(\Delta_{\mathcal{R}}^{k}\big)^{1-2\theta}
- \big(\Delta_{\mathcal{R}}^{k-2}\big)^{1-2\theta}
\geq \big({\rho_1^{1-2\theta}}-1\big)\big(\Delta_{\mathcal{R}}^{k-2}\big)^{1-2\theta}
\geq\big({\rho_1^{1-2\theta}}-1\big)\big(\Delta_{\mathcal{R}}^{\overline{K}_1-2}\big)^{1-2\theta}
>0,
\end{equation*}
where the second inequality follows from the facts that $\{\Delta_{\mathcal{R}}^k\}$  is non-increasing, $\rho_1\in(0,1)$, $1-2\theta<0$, and ${\rho_1^{1-2\theta}}-1>0$. This gives the desired result.

We next consider $H_{\delta}(\bm{x}^{k}, \bm{x}^{k-1}, \overline{\gamma}_{k-1})>\zeta$. In this case, $\Delta_{H_{\delta}}^k>0$ and it follows from statement (1b) in \textit{\textbf{Step 1}} that
\begin{equation}\label{frac-1-a1-leq}
a_1^{-1}\leq\big(\Delta_{H_{\delta}}^k\big)^{-2\theta} \big(\Delta_{\mathcal{R}}^{k-2}-\Delta_{\mathcal{R}}^{k}\big).
\end{equation}
Next, we define $g(s):=s^{-2 \theta}$ for $s \in(0, \infty)$. It is easy to see that $g$ is non-increasing. Then, for any $k \geq \overline{K}_1$, we further consider the following two cases.
\begin{itemize}
\item If $g(\Delta_{H_\delta}^k) \leq 2 g(\Delta_{\mathcal{R}}^{k-2})$, it follows from \eqref{frac-1-a1-leq} that
    \begin{equation*}
    \begin{aligned}
    \textstyle\frac{1}{a_1}
    &\leq g(\Delta_{H_\delta}^k)\big(\Delta_{\mathcal{R}}^{k-2}
    -\Delta_{\mathcal{R}}^{k}\big)
    \leq 2g(\Delta_{\mathcal{R}}^{k-2}) \big(\Delta_{\mathcal{R}}^{k-2}-\Delta_{\mathcal{R}}^{k}\big) \\
    &\leq \textstyle 2 \int_{\Delta_{\mathcal{R}}^{k}}^{\Delta_{\mathcal{R}}^{k-2}} g(s)\,\mathrm{d}s
    = \frac{2(\Delta_{\mathcal{R}}^{k-2})^{1-2 \theta}-2(\Delta_{\mathcal{R}}^{k})^{1-2 \theta}}{1-2 \theta},
    \end{aligned}
    \end{equation*}
    which, together with $1-2\theta<0$, implies that
    \begin{equation*}
    (\Delta_{\mathcal{R}}^{k})^{1-2\theta}
    -(\Delta_{\mathcal{R}}^{k-2})^{1-2\theta}
    \geq (2\theta-1)/(2a_1).
    \end{equation*}

\item If $g(\Delta_{H_\delta}^k) > 2 g(\Delta_{\mathcal{R}}^{k-2})$, it follows that $\Delta_{H_\delta}^k<2^{-\frac{1}{2\theta}}\Delta_{\mathcal{R}}^{k-2}$.
    This, along with the updating rule of $\mathcal{R}_k$, $H_{\delta}(\bm{x}^{k}, \bm{x}^{k-1}, \overline{\gamma}_{k-1})\leq\mathcal{R}_k\leq\mathcal{R}_{k-1}\leq\mathcal{R}_{k-2}$ (by Proposition \ref{pro-basic-property}(i)\&(ii)) and $p_k\in[p_{\min},1]$, yields that
    \begin{equation*}
    \begin{aligned}
    \textstyle\Delta_\mathcal{R}^k
    &= p_kH_{\delta}(\bm{x}^{k}, \bm{x}^{k-1}, \overline{\gamma}_{k-1})+(1-p_k)\mathcal{R}_{k-1}-\zeta\\
    &\leq p_k H_{\delta}(\bm{x}^{k}, \bm{x}^{k-1}, \overline{\gamma}_{k-1})+(1-p_k)\mathcal{R}_{k-2}-\zeta\\
    &= p_k\Delta_{H_{\delta}}^k+(1-p_k)\Delta_\mathcal{R}^{k-2}\leq \big[1-\big(1-2^{-\frac{1}{2\theta}}\big)p_k\big]
    \Delta_\mathcal{R}^{k-2}
    \leq d_3\Delta_\mathcal{R}^{k-2},
    \end{aligned}
    \end{equation*}
    where $d_3:=1-\big(1-2^{-\frac{1}{2\theta}}\big)p_{\min}\in(0,1)$. This, together with $\Delta_{\mathcal{R}}^{k-2}>0$ and $\Delta_{\mathcal{R}}^{k}>0$, implies that
    \begin{equation*}
    (\Delta_{\mathcal{R}}^{k})^{1-2\theta}-(\Delta_{\mathcal{R}}^{k-2})^{1-2\theta}
    \geq({d_3^{1-2\theta}}-1)(\Delta_{\mathcal{R}}^{k-2})^{1-2\theta}
    \geq({d_3^{1-2\theta}}-1)(\Delta_{\mathcal{R}}^{\overline{K}_1-2})^{1-2\theta}>0,
    \end{equation*}
    where the second inequality follows from the facts that $\{\Delta_{\mathcal{R}}^k\}$ is non-increasing, $d_3\in(0,1)$ and $1-2\theta<0$.

\end{itemize}

In view of the above, we have that
\begin{equation*}
\textstyle(\Delta_{\mathcal{R}}^{k})^{1-2\theta}-(\Delta_{\mathcal{R}}^{k-2})^{1-2\theta}
\geq
a_2:=\min\left\{({\rho_1^{1-2\theta}}-1)(\Delta_{\mathcal{R}}^{\overline{K}_1-2})^{1-2\theta},
\,({d_3^{1-2\theta}}-1)(\Delta_{\mathcal{R}}^{\overline{K}_1-2})^{1-2\theta},
\,\frac{2\theta-1}{2a_1}\right\}.
\end{equation*}
This shows that statement (2c) holds.

\vspace{1mm}
\textit{\textbf{Step 3}}. We are now ready to prove our final results.

\textit{Statement (i)}. Suppose that $\theta=0$. For any $k\geq\overline{K}_1+1$, combining statement (2a) in \textit{\textbf{Step 2}}, \eqref{Fxk-Rk-1} and the fact that $\{\Delta_{\mathcal{R}}^k\}$ is non-increasing, we have that $F(\bm{x}^k)\leq\zeta$ and
\begin{equation}
\begin{aligned}\label{thetaineq1}
\textstyle\max\left\{|F(\bm{x}^k)-\zeta|,\mathcal{R}_k-\zeta\right\}
&\leq \max\{d_2,1\}\Delta_\mathcal{R}^{k-1}
\leq \max\{d_2,1\}\rho_2^{\lfloor\frac{k-\overline{K}_1+1}{2}\rfloor} \Delta_\mathcal{R}^{\overline{K}_1-2} \\
&\leq \max\{d_2,1\}\rho_2^{\frac{k-\overline{K}_1}{2}} \Delta_\mathcal{R}^{\overline{K}_1-2}
=c_1\eta_1^k,
\end{aligned}
\end{equation}
where $\lfloor a \rfloor$ denotes the largest integer smaller than or equal to $a$, $c_1:=\max\{d_2,1\}\rho_2^{-\frac{\overline{K}_1}{2}} \Delta_\mathcal{R}^{\overline{K}_1-2}$, $\eta_1:=\sqrt{\rho_2}\in(0,1)$, and the last inequality holds because $\lfloor\frac{k-\overline{K}_1+1}{2}\rfloor\geq\frac{k-\overline{K}_1}{2}$ and $\rho_2\in(0,1)$. This also implies that $\zeta-c_1\eta_1^k\leq F(\bm{x}^k)\leq \zeta$, and thus proves statement (i).

\textit{Statement (ii)}. Suppose that $\theta\in(0,\frac{1}{2}]$. Using similar arguments as in the above case, we can obtain that
\begin{equation}\label{thetaineq2}
    \max\left\{|F(\bm{x}^k)-\zeta|,\mathcal{R}_k-\zeta\right\}\leq c_2\eta_2^k
\end{equation}
holds for any $k\geq\overline{K}_1+1$, where $c_2>0$ and $\eta_2\in(0,1)$.

\textit{Statement (iii)}. Suppose that $\theta\in(\frac{1}{2},1)$. Let $\pi_k=(k-\overline{K}_1) \bmod 2$ for any $k \geq \overline{K}_1$. Then, by the nonnegativity of $\{\Delta_{\mathcal{R}}^k\}_{k=0}^{\infty}$ and $1-2\theta<0$, we have
\begin{equation*}
\begin{aligned}
\textstyle(\Delta_{\mathcal{R}}^k)^{1-2 \theta} & \geq(\Delta_{\mathcal{R}}^k)^{1-2 \theta}-(\Delta_{\mathcal{R}}^{\overline{K}_1+\pi_k})^{1-2 \theta}=\textstyle\sum_{j=1}^{(k-\overline{K}_1-\pi_k)/2}((\Delta_{\mathcal{R}}^{\overline{K}_1+\pi_k+2j})^{1-2 \theta}-(\Delta_{\mathcal{R}}^{\overline{K}_1+\pi_k-2+2j})^{1-2 \theta}) \\
& \textstyle\geq \frac{(k-\overline{K}_1-\pi_k) a_2}{2} \geq \frac{a_2}{4} k,
\end{aligned}
\end{equation*}
where the last inequality holds whenever $k \geq 2(\overline{K}_1+1) \geq 2(\overline{K}_1+\pi_k)$. Finally, using this relation, \eqref{Fxk-Rk-1} and the fact that $\{\Delta_{\mathcal{R}}^k\}$ is non-increasing, we see that, for all $k \geq 2(\overline{K}_1+1)+1$,
\begin{equation}\label{thetaineq3}
\begin{aligned}
\textstyle\quad~\max\left\{|F(\bm{x}^k)-\zeta|,\mathcal{R}_k-\zeta\right\}&\leq \max\{d_2,1\} \Delta_{\mathcal{R}}^{k-1}
\leq \max\{d_2,1\}(4/a_2)^{\frac{1}{2\theta-1}}(k-1)^{-\frac{1}{2\theta-1}}
\\
&\leq \max\{d_2,1\}(4/a_2)^{\frac{1}{2\theta-1}}[k/(k-1)]^{\frac{1}{2\theta-1}}\cdot k^{-\frac{1}{2 \theta-1}}\\
&\leq \max\{d_2,1\}(8/a_2)^{\frac{1}{2\theta-1}}k^{-\frac{1}{2\theta-1}}=c_3k^{-\frac{1}{2 \theta-1}},
\end{aligned}
\end{equation}
where $c_3:=\max\{d_2,1\}(8/a_2)^{\frac{1}{2\theta-1}}$, and the last inequality follows from $\frac{k}{k-1}\leq2$ and $\frac{1}{2\theta-1}\geq0$. This proves statement (iii).
\end{proof}

\begin{theorem}\label{theorem-seq-rate}
Under the same assumptions as in Theorem \ref{theorem-fun-rate},
the following statements hold for all sufficiently large $k$.  \vspace{1mm}
\begin{itemize}
\item[{\rm(i)}] If $\theta \in[0,\frac{1}{2}]$, there exist $d_1>0$ and $\varrho \in(0,1)$ such that $\|\bm{x}^k-\bm{x}^*\|\leq d_1\varrho^k$.
\vspace{1mm}
\item[{\rm(ii)}] If $\theta \in(\frac{1}{2}, 1)$, there exists $d_2>0$ such that $\|\bm{x}^k-\bm{x}^*\|\leq d_2k^{-\frac{1-\theta}{2\theta-1}}$.
\end{itemize}
\end{theorem}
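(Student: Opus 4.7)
The plan is to reduce the sequence rate to the function-value rate already established in Theorem~\ref{theorem-fun-rate}. Since Theorem~\ref{nexPGA-theorem-wholesequence} has shown $\bm{x}^k\to\bm{x}^*$, for every sufficiently large $k$ the distance to the limit is controlled by the tail
\begin{equation*}
\|\bm{x}^k-\bm{x}^*\|\;\le\;\sum_{i=k}^{\infty}\|\bm{x}^{i+1}-\bm{x}^i\|\;\le\;\frac{1}{\pi}\sum_{i=k}^{\infty}\Xi_i,
\end{equation*}
where $\Xi_i=\sqrt{\mathcal{R}_i-\mathcal{R}_{i+1}}$ and $\pi$, $M$, $\ell(k)$ are those introduced in~\eqref{defNotion}, and the second inequality is~\eqref{Xik-xkdiff}. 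The core of the argument is therefore to bound this tail sum by the function-value gap $\mathcal{R}_k-\zeta$.

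First I would rerun the induction of Step~3 in the proof of Theorem~\ref{nexPGA-theorem-wholesequence} with the starting index $k_J$ replaced by an arbitrary sufficiently large index $k$. Once the full sequence has been shown to converge to $\bm{x}^*$, every consecutive block of iterates eventually lies in $\mathcal{B}_{\widehat{Q}}(\bm{x}^*)$, the proximal parameters are uniformly bounded by Proposition~\ref{pro-gammak-bounded}, and the uniformized KL inequality~\eqref{phi-dist} remains in force, so all ingredients of Step~3 are available. The induction then yields, for every sufficiently large $k$,
\begin{equation*}
\sum_{i=\ell(k)}^{\infty}\Xi_i\;\le\;\bigl(1+2\sqrt{1-p_{\min}}\bigr)\sum_{i=k-2}^{\ell(k)-1}\Xi_i\;+\;\frac{\tilde{c}}{2\pi}\sum_{i=k}^{\ell(k)}\varphi(\mathcal{R}_i-\zeta).
\end{equation*}
Combining this with the finite block $\sum_{i=k}^{\ell(k)-1}\Xi_i$, applying Cauchy--Schwarz to the $\Xi$-sums (at most $M+1$ terms with $\sum\Xi_i^2=\mathcal{R}_{k-2}-\mathcal{R}_{\ell(k)}\le\mathcal{R}_{k-2}-\zeta$), and using the monotonicity of $\{\mathcal{R}_k\}$ from Proposition~\ref{pro-basic-property}(ii) to bound the $\varphi$-sum by $M\,\varphi(\mathcal{R}_k-\zeta)$, I obtain the master estimate
\begin{equation*}
\|\bm{x}^k-\bm{x}^*\|\;\le\;A_1\sqrt{\mathcal{R}_{k-2}-\zeta}\;+\;A_2\,\varphi(\mathcal{R}_k-\zeta)
\end{equation*}
for constants $A_1,A_2>0$ independent of $k$ and valid for all sufficiently large $k$.

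With this master estimate and $\varphi(t)=\tilde{a}\,t^{1-\theta}$ from the definition of KL exponent, the two cases follow from Theorem~\ref{theorem-fun-rate} by direct substitution. For $\theta\in[0,\tfrac{1}{2}]$, parts~(i)--(ii) of Theorem~\ref{theorem-fun-rate} give $\mathcal{R}_k-\zeta\le c_2\eta_2^k$, so both $\sqrt{\mathcal{R}_{k-2}-\zeta}$ and $(\mathcal{R}_k-\zeta)^{1-\theta}$ decay geometrically, yielding $\|\bm{x}^k-\bm{x}^*\|\le d_1\varrho^k$ for some $d_1>0$ and $\varrho\in(0,1)$. For $\theta\in(\tfrac{1}{2},1)$, part~(iii) gives $\mathcal{R}_k-\zeta=O(k^{-1/(2\theta-1)})$, so $\sqrt{\mathcal{R}_{k-2}-\zeta}=O(k^{-1/(2(2\theta-1))})$ while $\varphi(\mathcal{R}_k-\zeta)=O(k^{-(1-\theta)/(2\theta-1)})$. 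Since $\theta>\tfrac{1}{2}$ implies $1-\theta<\tfrac{1}{2}$, the comparison $(1-\theta)/(2\theta-1)<1/(2(2\theta-1))$ shows that the $\varphi$-term decays more slowly and dictates the overall rate, giving $\|\bm{x}^k-\bm{x}^*\|\le d_2\,k^{-(1-\theta)/(2\theta-1)}$.

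I expect the main obstacle to lie in the first step: restarting the Step~3 induction at an arbitrary sufficiently large index rather than at the fixed index $k_J$. Concretely, I need to verify that the neighborhood inclusions $\bm{x}^{k-1},\bm{x}^k\in\mathcal{B}_{\widehat{Q}}(\bm{x}^*)$ and the window condition $\zeta<\mathcal{R}_k<\zeta+\nu$ on which~\eqref{ineq-sum-Ei} and~\eqref{phi-dist} rely continue to hold for all large $k$, and that the resulting constants $A_1,A_2$ can be chosen independently of the starting index. Both points follow immediately from $\bm{x}^k\to\bm{x}^*$ and $\mathcal{R}_k\downarrow\zeta$, so the step is technical rather than conceptually new: it is essentially a careful reindexing of the induction already executed in Theorem~\ref{nexPGA-theorem-wholesequence}, with $k$ in place of $k_J$ throughout.
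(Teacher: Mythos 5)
Your proof is correct and follows the same route as the paper: once the whole sequence is known to converge, the block inequality \eqref{ineq-sum-Ei} holds for all sufficiently large $k$, summing it yields a master estimate of the form $\|\bm{x}^k-\bm{x}^*\|\le A_1(\Delta_\mathcal{R}^{k-2})^{1/2}+A_2(\Delta_\mathcal{R}^{k-2})^{1-\theta}$, and the function-value rates from Theorem~\ref{theorem-fun-rate} are then substituted (noting that for $\theta\in[0,\tfrac12]$ the $\sqrt{\cdot}$ term dominates while for $\theta\in(\tfrac12,1)$ the power $1-\theta$ term does, exactly as you argue). The paper does not re-run the full Step~3 induction (the neighborhood inclusion is automatic after global convergence, so one can simply sum the block inequality up to $\tilde k$ and let $\tilde k\to\infty$) and bounds the finite $\Xi$-sums by $(M+1)\sqrt{\Delta_\mathcal{R}^{k-2}}$ directly rather than via Cauchy--Schwarz, but these are only cosmetic variations on the same argument.
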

\begin{proof}
First, recall the notations used in the previous analysis that
\begin{equation*}
\begin{aligned}
\textstyle M&:=\textstyle{\left\lceil\frac{2\big(1+\sqrt{1-p_{\min}}\big)}{1-\sqrt{1-p_{\min}}}\right\rceil^2}, \quad \ell(k):=k+M-1, \quad \Xi_k:=\sqrt{\mathcal{R}_k-\mathcal{R}_{k+1}},\\
\Delta_{i,j}^{\varphi}&:=\varphi(\mathcal{R}_i-\zeta)-\varphi(\mathcal{R}_j-\zeta),
\quad \pi:=\textstyle{\sqrt{\frac{(1-\delta)p_{\min}\gamma_{\min}}{8}}},
\quad \Delta^k_{\mathcal{R}}:=\mathcal{R}_k-\zeta, 
\end{aligned}
\end{equation*}
where $\lceil a \rceil$ denotes the least integer greater than or equal to $a$. Since $\bm{x}^*$ is an accumulation point of $\{\bm{x}^k\}$, it follows from Theorem \ref{nexPGA-theorem-wholesequence} and the corresponding proof that the whole sequence $\{\bm{x}^k\}_{k=0}^{\infty}$ converges to $\bm{x}^*$,
\begin{equation}\label{Xik-xkdiff-App}
\textstyle\|\bm{x}^{k+1}-\bm{x}^k\|\leq\frac{\Xi_k}{\pi},
\end{equation}
\begin{equation}\label{M-relation-App}
\big(1-\sqrt{1-p_{\min}}\big)\sqrt{M}
- \big(1+2\sqrt{1-p_{\min}}\big)
\geq 1,
\end{equation}
and there exists an integer $K_1$ such that
\begin{equation}\label{ineq-sum-Ei-App}
\textstyle\frac{1-\sqrt{1-p_{\min }}}{\sqrt{M}}\sum_{i=k}^{\ell(k)}\Xi_i
\leq \left(\frac{1}{2}+\sqrt{1-p_{\min}}\right)
\big(\Xi_{k-2}+\Xi_{k-1}\big)
+ \frac{\tilde{c}}{2\pi}\Delta^{\varphi}_{k,k+M},
\quad \forall\,k\geq K_1.
\end{equation}
Moreover, since $\{\mathcal{R}_k\}_{k=0}^{\infty}$ is non-increasing and converges to $\zeta$ (by Proposition \ref{pro-basic-property}(ii)) and \eqref{thetaineq1}, \eqref{thetaineq2}, \eqref{thetaineq3} hold for all sufficiently large $k$, there exists an integer $K_2$ such that $\zeta\leq\mathcal{R}_k\leq\zeta+1$ (i.e., $0\leq\Delta_\mathcal{R}^k\leq1$) and \eqref{thetaineq1}, \eqref{thetaineq2}, \eqref{thetaineq3} hold whenever $k\geq K_2$.

Next, we claim that there exist $t_1>0$ and $t_2>0$ such that the following inequality holds: 
\begin{equation}\label{sum-infty-Ei}
\textstyle\|\bm{x}^k-\bm{x}^*\|\leq t_1(\Delta_\mathcal{R}^{k-2})^{\frac{1}{2}}
+ t_2 (\Delta_\mathcal{R}^{k-2})^{1-\theta},
\quad \forall\,k\geq K_1.
\end{equation}
Indeed, for any $k \geq K_1$, we see that
\begin{equation}\label{ineqadd-tildek}
\begin{aligned}
&\quad \textstyle\big(1-\sqrt{1-p_{\min}})\sqrt{M} \sum_{i=\ell(k)}^{\tilde{k}}\Xi_i
\leq \frac{1-\sqrt{1-p_{\min}}}{\sqrt{M}}
\sum_{i=k}^{\tilde{k}}\sum_{t=i}^{\ell(i)}\Xi_t  \\
& \textstyle\leq \left(\frac{1}{2}+\sqrt{1-p_{\min}}\right) \sum_{i=k}^{\tilde{k}}\big(\Xi_{i-2}+\Xi_{i-1}\big)
+ \frac{\tilde{c}}{2\pi}\sum_{i=k}^{\tilde{k}}
\Delta^{\varphi}_{i,i+M}  \\
&\leq \textstyle\big(1+2\sqrt{1-p_{\min}}\big) \sum_{i=k-2}^{\tilde{k}}\Xi_i
+ \frac{\tilde{c}\tilde{a}}{2\pi}\sum_{i=k}^{\ell(k)} \big(\mathcal{R}_i-\zeta\big)^{1-\theta}\\
&=\textstyle\big(1+2\sqrt{1-p_{\min}}\big) \left(\textstyle\sum_{i=k-2}^{\ell(k)-1}\Xi_i+\textstyle\sum_{i=\ell(k)}^{\tilde{k}}\Xi_i\right)
+ \frac{\tilde{c}\tilde{a}}{2\pi}\sum_{i=k}^{\ell(k)} \big(\mathcal{R}_i-\zeta\big)^{1-\theta}
\end{aligned}
\end{equation}
holds for all $\tilde{k}\geq\ell(k)$,
where the first inequality follows from the nonnegativity of $\Xi_k$ and the fact that the term $\Xi_i$ with $i\in\{\ell(k), \cdots, \tilde{k}\}$ appears $M$ times in the double sum; the second inequality is obtained by applying \eqref{ineq-sum-Ei-App} with $\varphi(s)=\tilde{a}s^{1-\theta}$ to each $k\in\{k,k+1,\cdots,\tilde{k}\}$; the third inequality follows because
\begin{equation*}
\begin{aligned}
\textstyle\sum_{i=k}^{\tilde{k}}\Delta^{\varphi}_{i,i+M}
&=\textstyle\sum_{i=k}^{\tilde{k}}\varphi(\mathcal{R}_i-\zeta)-\sum_{i=k}^{\tilde{k}}\varphi(\mathcal{R}_{i+M}-\zeta)
=\sum_{i=k}^{\tilde{k}}\varphi(\mathcal{R}_i-\zeta)-\sum_{i=k+M}^{\tilde{k}+M}\varphi(\mathcal{R}_i-\zeta) \\
&=\textstyle\sum_{i=k}^{{k}+M-1}\varphi(\mathcal{R}_i-\zeta)-\sum_{i=\tilde{k}+1}^{\tilde{k}+M}\varphi(\mathcal{R}_i-\zeta)
\leq\sum_{i=k}^{\ell({k})}\varphi(\mathcal{R}_i-\zeta)=\sum_{i=k}^{\ell({k})}\tilde{a}(\mathcal{R}_i-\zeta)^{1-\theta}.
\end{aligned}
\end{equation*}
Using \eqref{ineqadd-tildek}, together with \eqref{M-relation-App}, we obtain that
\begin{equation*}
\textstyle\sum_{i=\ell(k)}^{\tilde{k}}\Xi_i
\leq \big(1+2\sqrt{1-p_{\min}}\big)\sum_{i=k-2}^{\ell(k)-1}\Xi_i
+ \frac{\tilde{c}\tilde{a}}{2\pi}\sum_{i=k}^{\ell({k})}(\mathcal{R}_i-\zeta)^{1-\theta},
\end{equation*}
which further implies that
\begin{equation}\label{Ei-k-tildek}
\textstyle\sum_{i=k}^{\tilde{k}}\Xi_i= \sum_{i=k}^{\ell(k)-1}\Xi_i
+ \sum_{i=\ell(k)}^{\tilde{k}}\Xi_i
\leq \big(2+2\sqrt{1-p_{\min}}\big) \sum_{i=k-2}^{\ell(k)-1}\Xi_i
+ \frac{\tilde{c}\tilde{a}}{2\pi}\sum_{i=k}^{\ell(k)} \big(\mathcal{R}_i-\zeta\big)^{1-\theta}.
\end{equation}
Moreover, it follows from  $\ell(k)=k+M-1$ and the fact that $\{\mathcal{R}_k\}$ converges monotonically to $\zeta$ (by Proposition \ref{pro-basic-property}(ii)) that
\begin{equation*}
\textstyle\sum_{i=k-2}^{\ell(k)-1}\Xi_i=\sum_{i=k-2}^{\ell(k)-1}\sqrt{\mathcal{R}_i-\mathcal{R}_{i+1}}\leq\sum_{i=k-2}^{\ell(k)-1}\sqrt{\mathcal{R}_i-\zeta}\leq (M+1) \sqrt{\mathcal{R}_{k-2}-\zeta}
=(M+1) (\Delta_\mathcal{R}^{k-2})^{\frac{1}{2}}
\end{equation*}
and
\begin{equation*}
\textstyle\sum_{i=k}^{\ell({k})}(\mathcal{R}_i-\zeta)^{1-\theta}\leq M(\mathcal{R}_k-\zeta)^{1-\theta}\leq M(\mathcal{R}_{k-2}-\zeta)^{1-\theta}= M(\Delta_{\mathcal{R}}^{k-2})^{1-\theta}.
\end{equation*}
Using the two above relations and passing to the limit $\tilde{k}\to\infty$ in \eqref{Ei-k-tildek}, we further have
\begin{equation*}
\begin{aligned}
\textstyle\sum_{i=k}^{\infty}\Xi_i
&\leq \big(2+2\sqrt{1-p_{\min}}\big)\textstyle\sum_{i=k-2}^{\ell(k)-1}\Xi_i
+ \frac{\tilde{c}\tilde{a}}{2\pi}\sum_{i=k}^{\ell(k)} \big(\mathcal{R}_i-\zeta\big)^{1-\theta}\\
&\textstyle\leq\big(2+2\sqrt{1-p_{\min}}\big)(M+1)(\Delta_\mathcal{R}^{k-2})^{\frac{1}{2}}
+ \frac{\tilde{c}\tilde{a}M}{2\pi} (\Delta_\mathcal{R}^{k-2})^{1-\theta}.
\end{aligned}
\end{equation*}
This, together with the triangle inequality and \eqref{Xik-xkdiff-App}, yields
\begin{equation*}
\begin{aligned}
\|\bm{x}^k-\bm{x}^*\|
&\leq\sum\limits_{i=k}^{\infty}\|\bm{x}^{i+1}-\bm{x}^i\|\leq\textstyle\frac{1}{\pi}\sum_{i=k}^{\infty}\Xi_i \leq\frac{\big(2+2\sqrt{1-p_{\min}}\big)(M+1)}{\pi}(\Delta_\mathcal{R}^{k-2})^{\frac{1}{2}}
+ \frac{\tilde{c}\tilde{a}M}{2\pi^2} (\Delta_\mathcal{R}^{k-2})^{1-\theta}.
\end{aligned}
\end{equation*}
This proves \eqref{sum-infty-Ei} with $t_1:=\pi^{-1}\big(2+2\sqrt{1-p_{\min}}\big)(M+1)$ and $t_2:=\frac{\tilde{c}\tilde{a}M}{2\pi^2}$.

With the above inequality in hand, we consider the following two cases.

\textit{Case (i).} $\theta\in[0,\frac{1}{2}]$. In this case, for any $k\geq\max\{K_1,K_2\}+2$, we have that $1-\theta\geq\frac{1}{2}$, $\Delta_{\mathcal{R}}^{k-2}\leq1$, \eqref{thetaineq1}, \eqref{thetaineq2} and \eqref{sum-infty-Ei} hold. Then, we further have
\begin{equation*}
\begin{aligned}
\|\bm{x}^k-\bm{x}^*\|&\leq t_1(\Delta_\mathcal{R}^{k-2})^{\frac{1}{2}}
+ t_2 (\Delta_\mathcal{R}^{k-2})^{1-\theta}\leq (t_1+t_2)(\Delta_\mathcal{R}^{k-2})^{\frac{1}{2}}.
\end{aligned}
\end{equation*}
Combining this with \eqref{thetaineq1} and \eqref{thetaineq2}, we obtain
\begin{equation*}
\|\bm{x}^k-\bm{x}^*\|
\leq(t_1+t_2)\max\{\sqrt{c_1},\sqrt{c_2}\}{\max\{\sqrt{\eta_1},\sqrt{\eta_2}\}}^{k-2}=d_1\varrho^k,
\end{equation*}
where $d_1:=(t_1+t_2)\max\{\sqrt{c_1},\sqrt{c_2}\}{\max\{\sqrt{\eta_1},\sqrt{\eta_2}\}}^{-2}>0$ and $\varrho:=\max\{\sqrt{\eta_1},\sqrt{\eta_2}\}\in(0,1)$.
This proves statement (i).

\textit{Case (ii).} $\theta\in(\frac{1}{2},1)$. In this case, for any $k\geq\max\{K_1,K_2\}+3$, we have that $1-\theta<\frac{1}{2}$, $\Delta_{\mathcal{R}}^{k-2}\leq1$, \eqref{thetaineq3} and \eqref{sum-infty-Ei} hold. Then we further have
\begin{equation*}
\|\bm{x}^k-\bm{x}^*\|\leq t_1(\Delta_\mathcal{R}^{k-2})^{\frac{1}{2}}
+ t_2 (\Delta_\mathcal{R}^{k-2})^{1-\theta}\leq (t_1+t_2)(\Delta_\mathcal{R}^{k-2})^{1-\theta}.
\end{equation*}
Combining this with \eqref{thetaineq3}, we obtain
\begin{equation*}
\begin{aligned}
\textstyle\|\bm{x}^k-\bm{x}^*\|
&\leq(t_1+t_2)c_3^{1-\theta}{(k-2)}^{-\frac{1-\theta}{2\theta-1}}\leq(t_1+t_2)c_3^{1-\theta}{[k/(k-2)]}^{\frac{1-\theta}{2\theta-1}}\cdot{k}^{-\frac{1-\theta}{2\theta-1}}\\
&\leq 3^{\frac{1-\theta}{2\theta-1}}(t_1+t_2)c_3^{1-\theta}{k}^{-\frac{1-\theta}{2\theta-1}}= d_2{k}^{-\frac{1-\theta}{2\theta-1}},
\end{aligned}
\end{equation*}
where $d_2:=3^{\frac{1-\theta}{2\theta-1}}(t_1+t_2)c_3^{1-\theta}>0$, and the last inequality follows from $\frac{k}{k-2}\leq3$ and $\frac{1-\theta}{2\theta-1}>0$. This proves statement (ii).
\end{proof}

In summary, the global sequential convergence and local convergence rates of both the generated sequence and objective function values are established in this section \textit{without} requiring the global Lipschitz continuity of $\nabla f$ or any boundedness-type assumptions on the generated sequence. As discussed in Section~\ref{sec-algo}, our nexPGA framework encompasses and complements numerous existing methods and can potentially give rise to new potentially accelerated variants. Consequently, our analysis provides a unified convergence theory that can be applied to many well-known algorithms and their improved extensions, including those studied in \cite{gtt2018DC,jkm2023convergence,kl2025convergence,km2022convergence,wcp2018proximal,yang2024proximal}.

%%%%%%%%%%%%%%%%%%%%%%%%%%%%%%%%%%%%%%%%%%%%%%%%%
\section{Numerical experiments}\label{sec-num-exp}

In this section, we conduct some preliminary numerical experiments to evaluate the performance of our nexPGA in Algorithm \ref{algo-nexPGA} for solving the $\ell_{1\text{-}2}$ regularized least squares problem. All experiments are run in {\sc Matlab} R2023a on a PC with Intel processor i7-12700K@3.60GHz (with 12 cores and 20 threads) and 64GB of RAM, equipped with a Windows OS.

The $\ell_{1\text{-}2}$ regularized least squares problem has received significant attention in recent years; see, e.g., \cite{lyhx2015computing,wcp2018proximal,ylhx2015minimization}. This problem is mathematically formulated as

\begin{equation}\label{probleml12}
\min\limits_{\bm{x}\in\mathbb{R}^n}~ F(\bm{x}):=\frac{1}{2}\|A\bm{x}-\bm{b}\|^2 + \lambda\big( \|\bm{x}\|_1 - \|\bm{x}\| \big), 
\end{equation}
where $A\in\mathbb{R}^{m\times n}$, $\bm{b}\in\mathbb{R}^m$, and $\lambda>0$ is a regularization parameter. Problem \eqref{probleml12} can be reformulated in the form of \eqref{ge-DC-problem} in two different ways: \vspace{1mm}
\begin{itemize}[leftmargin=0.4cm]
\item[] \textbf{Decomposition I}: $f(\bm{x})=\frac{1}{2}\|A\bm{x}-\bm{b}\|^2$, $P_1(\bm{x})=\lambda(\|\bm{x}\|_1-\|\bm{x}\|)$, and $P_2(\bm{x})=0$;
\vspace{1mm}
\item[] \textbf{Decomposition II}: $f(\bm{x})=\frac{1}{2}\|A\bm{x}-\bm{b}\|^2$, $P_1(\bm{x})=\lambda\|\bm{x}\|_1$, and $P_2(\bm{x})=\lambda\|\bm{x}\|$.
\end{itemize}
\vspace{1mm}
It is easy to verify that Assumptions \ref{assum-funs1} and \ref{assumC} are satisfied under \textbf{Decomposition I}, and also under \textbf{Decomposition II} provided that $2\lambda<\|A^{\top}\bm{b}\|_{\infty}$ (see, e.g.,
\cite[Example 4.1]{wcp2018proximal}). Thus, nexPGA is applicable for solving \eqref{probleml12} using either decomposition.

In our experiments, we will evaluate nexPGA with $\delta=0.1$ for \textbf{Decomposition I} (denoted by nexPGA), nexPGA with $\delta=0.1$ for \textbf{Decomposition II} (denoted by nexPGA-DC), and nexPGA with $\delta=0$ for \textbf{Decomposition I} (denoted by NPG). For all three methods, we follow Assumption \ref{assum-para} to set parameters as follows: $\tau=1.56$, $\eta=0.8$, $\beta_{\max}=10$, $\gamma_{\min }=10^{-6}$, $\gamma_{\max}=10^6$, and $p_k\equiv0.01$. Moreover, we set $\gamma_{0,0}=1$ and  
\begin{equation*}
{\textstyle\gamma_{k,0}=\min \left\{\max \left\{\max \left\{\frac{\left\langle\overline{\bm{y}}^k-\overline{\bm{y}}^{k-1}, \nabla f\left(\overline{\bm{y}}^k\right)-\nabla f\left(\overline{\bm{y}}^{k-1}\right)\right\rangle}{\|\overline{\bm{y}}^k-\overline{\bm{y}}^{k-1}\|^2},
\,0.9\overline{\gamma}_{k-1}\right\}, \,\gamma_{\min}\right\}, \,\gamma_{\max}\right\}.}  
\end{equation*}
In addition, for nexPGA(-DC), we choose the initial extrapolation parameters $\{\beta_{k,0}\}$ by $\beta_{k,0}=(t_{k-1}-1)/t_k$ with $t_{k+1}=\frac{1+\sqrt{1+4t_k^2}}{2}$ and $t_{-1}=t_0=1$.

We also include in our comparisons the proximal gradient method with extrapolation and line search (PGels) \cite{yang2024proximal} applied to \textbf{Decomposition I}, and the proximal difference-of-convex algorithm with extrapolation (pDCAe) \cite{wcp2018proximal} applied to \textbf{Decomposition II}. For both algorithms, we adopt the parameter settings recommended in their respective references. In addition, we initialize all algorithms at the origin and set the maximum running time to $\mathrm{T}^{\max}$ for all algorithms. The specific values of $\mathrm{T}^{\max}$ are specified in Figure \ref{FigET}.

In the following, we consider $\lambda \in \{0.1, \,0.01\}$ and $n\in\{3000,5000,10000\}$, with $m=0.1n$ and $s=0.2m$. For each triple $(m, n, s)$, we generate one random trial as follows. First, we generate a matrix $A \in \mathbb{R}^{m\times n}$ with i.i.d. (independent and identically distributed) standard Gaussian entries. Then, we uniformly at random choose a subset $\mathcal{S}$ of size $s$ from $\{1, \cdots, n\}$ and construct an $s$-sparse vector $\hat{\bm{x}}\in\mathbb{R}^{n}$, which has i.i.d. standard Gaussian entries on $\mathcal{S}$ and has zeros on $\mathcal{S}^c$. Finally, we set $\bm{b} = A\hat{\bm{x}}+0.01\cdot\hat{\bm{z}}$, where $\hat{\bm{z}}\in\mathbb{R}^{m}$ is a vector with i.i.d. standard Gaussian entries.

To evaluate the performances of different methods, we follow \cite{yang2024proximal,ypc2017nonmonotone} to use an evolution of objective values. To introduce this evolution, we first define $e(k):=(F(\bm{x}^k) - F^{\min})/({F(\bm{x}^0) - F^{\min}})$, where $F(\bm{x}^k)$ denotes the objective value at $\bm{x}^k$ obtained by a method and $F^{\min}$ denotes the minimum of the terminating objective values obtained among \textit{all} methods in a trial generated as above. For a method, let $\mathcal{T}(k)$ denote the \textit{total} computational time (from the beginning) when it obtains $\bm{x}^k$. Thus, $\mathcal{T}(0)=0$ and $\mathcal{T}(k)$ is non-decreasing with respect to $k$. We then define the evolution of objective values obtained by a particular method with respect to time $t\geq0$ as $E(t):=\min\left\{e(k):k\in\{i: \mathcal{T}(i) \leq t\}\right\}$. Note that $0 \leq E(t)\leq1$ (since $0 \leq e(k)\leq1$ for all $k$) and $E(t)$ is non-increasing with respect to $t$. It can be considered as a normalized measure of the reduction of the function value with respect to time. Then, one can take the average of $E(t)$ over several independent trials, and plot the average $E(t)$ within time $t$ for a given method.

Figure \ref{FigET} presents the average $E(t)$ over 10 independent trials for different methods applied to problem \eqref{probleml12}. Several observations can be drawn from the results: 
\begin{itemize}[leftmargin=0.7cm]
\item \textbf{(Line Search vs. No Line Search)} For \textbf{Decomposition II}, our nexPGA-DC always outperforms pDCAe by reducing the function value at a faster rate. This demonstrates the effectiveness of incorporating an average-type nonmonotone line search into pDCAe.

\item \textbf{(Extrapolation vs. No Extrapolation)} We also observe that nexPGA outperforms NPG. This highlights the importance of incorporating extrapolation steps into the line search framework to achieve possible acceleration.

\item \textbf{(ZH-type vs. GLL-type)} Lastly, with appropriately chosen parameters, our nexPGA exhibits comparable or even superior performance to PGels. This is indeed expected, as both methods share a similar algorithmic framework, but differ in their line search acceptance criteria. Notably, as discussed in Sections \ref{sec-algo} and \ref{sec-kl-analysis}, compared to PGels, our nexPGA is problem-parameter-free and has stronger theoretical guarantees under weaker assumptions.
\end{itemize}

\begin{figure}[ht]
\centering
\subfigure[$\lambda=0.1$]{
\includegraphics[width=4cm]{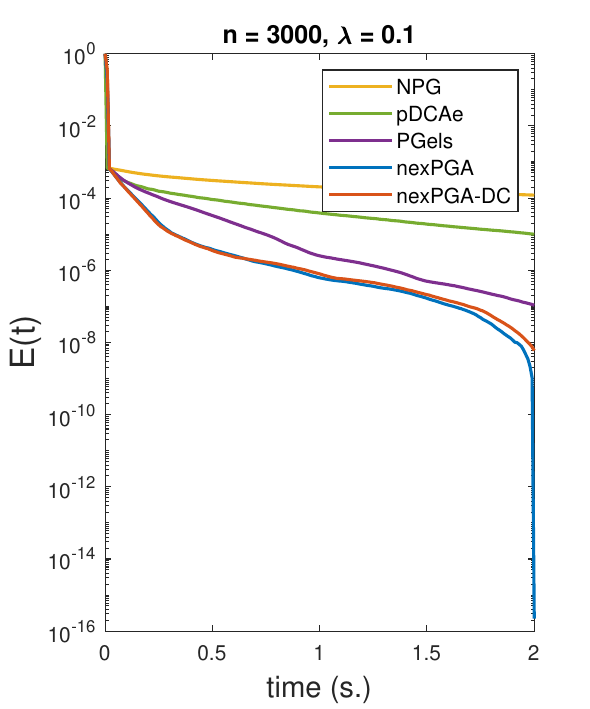}
\includegraphics[width=4cm]{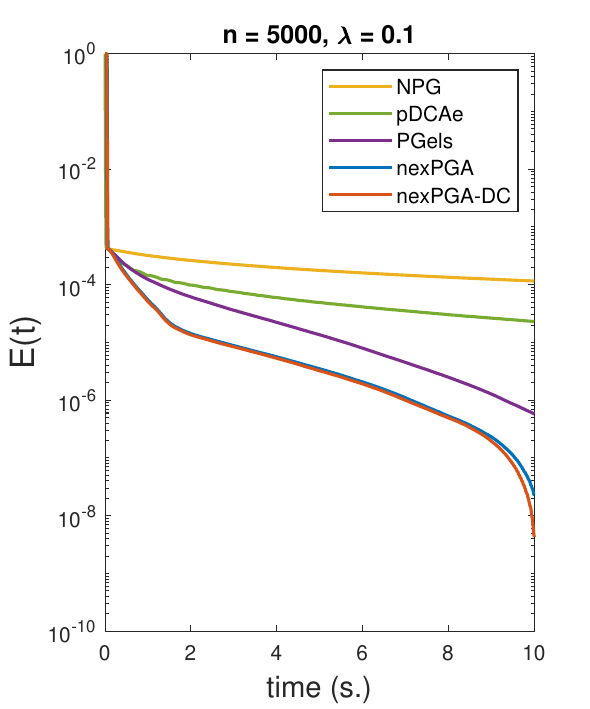}
\includegraphics[width=4cm]{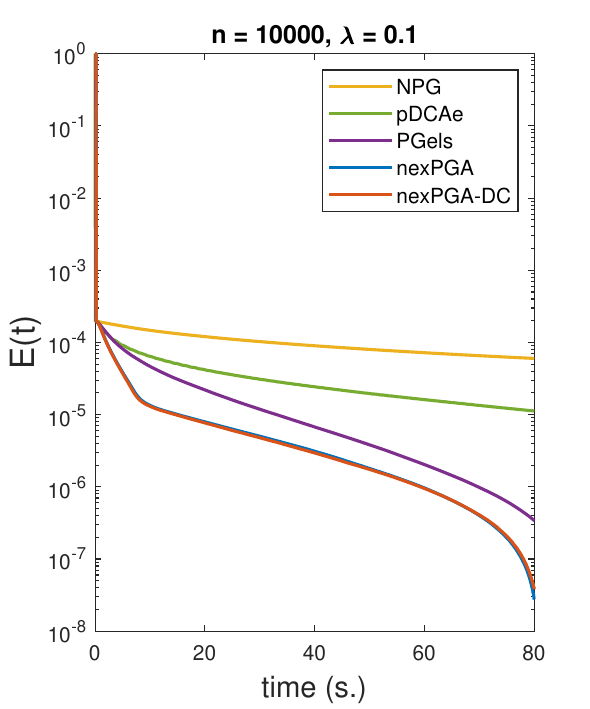}
}
\subfigure[$\lambda=0.01$]{
\includegraphics[width=4cm]{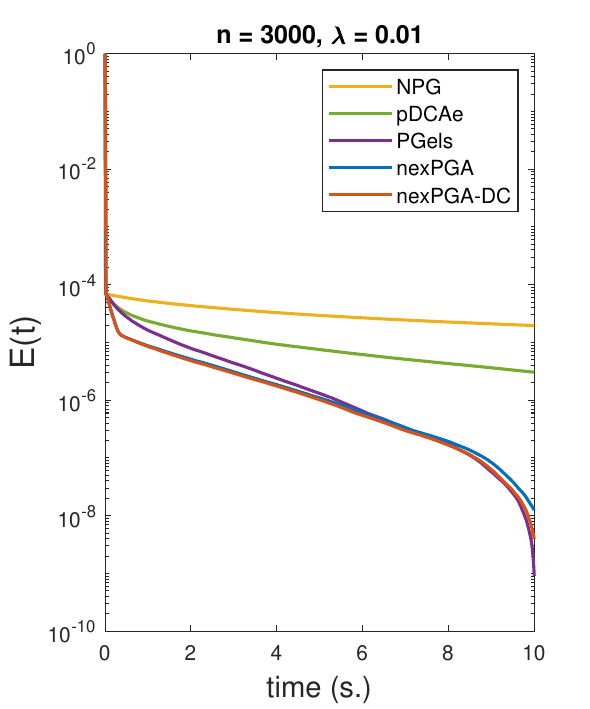}
\includegraphics[width=4cm]{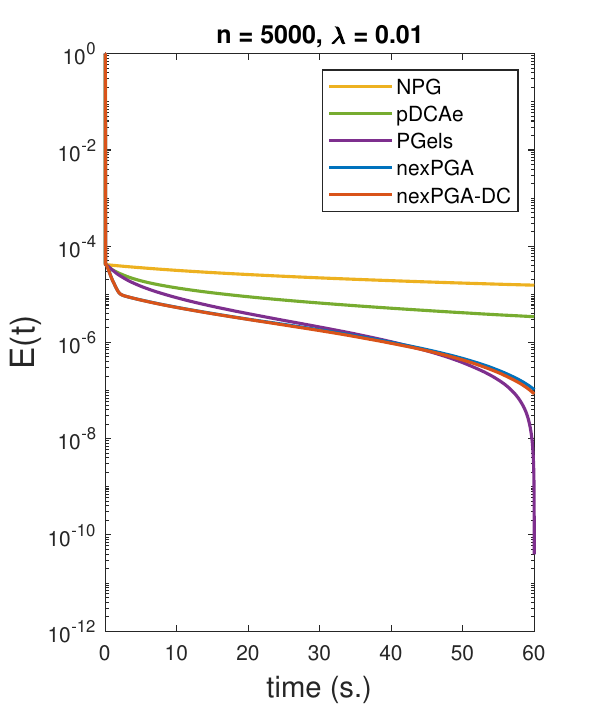}
\includegraphics[width=4cm]{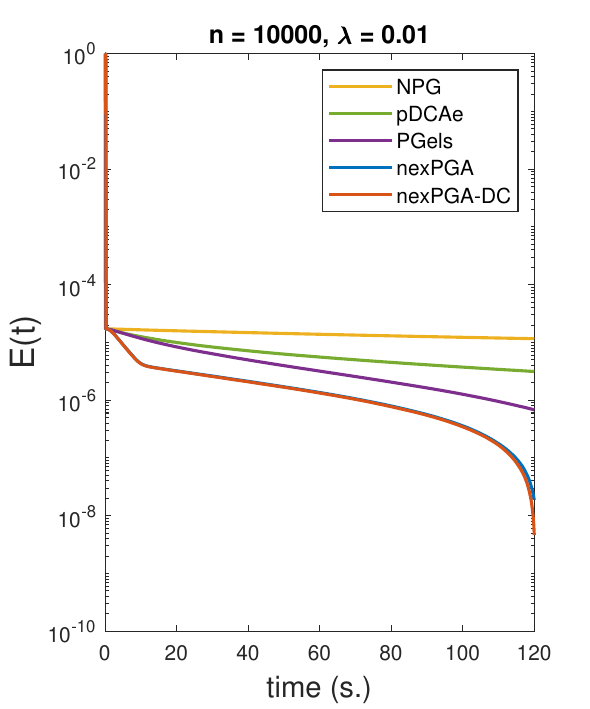}
}

\caption{Average $E(t)$ of 10 independent trials of different methods for solving \eqref{probleml12}.}\label{FigET}
\end{figure}

In summary, the simultaneous incorporation of the ZH-type nonmonotone line search and extrapolation in nexPGA is both necessary and impactful. These enhancements can improve the performance of the PG method and its variants, as evidenced by numerical results. Moreover, nexPGA operates without requiring problem-specific parameters or the global Lipschitz continuity of the gradient, yet it still has strong theoretical guarantees and achieves encouraging empirical performance. Finally, as our model \eqref{ge-DC-problem} encompasses a wide range of application problems, our nexPGA can offer broad applicability and has the potential to serve as a versatile algorithmic framework for solving large-scale nonconvex optimization problems.

%%%%%%%%%%%%%%%%%%%%%%%%%%%%%%%%%%%%%%%%%%
\section{Conclusion}\label{sec-conclusion}
In this paper, we study a general composite optimization model beyond the standard global Lipschitz gradient continuity setting. To solve this problem, we propose a novel algorithm that combines an extrapolation step with a carefully designed nonmonotone line search and establish complete global convergence results without imposing boundedness assumptions on the iterates. To the best of our knowledge, this is the first extrapolated proximal-type algorithm developed under such relaxed conditions. We believe that this work will advance the theoretical understanding of nonmonotone proximal-type methods in the absence of a global Lipschitz gradient assumption.

%%%%%%%%%%%%%%%%%%%%%%%%%%%%%%%%%%
\bibliographystyle{plain}
\bibliography{Ref_nexPGA}

\end{document}